\documentclass{amsart}%
\usepackage{amssymb}
\usepackage{amsmath}
\usepackage{amsfonts}
\usepackage[margin=1.5in]{geometry}
\usepackage[all,cmtip]{xy}
\usepackage{url}
\usepackage{graphicx}%
\setcounter{MaxMatrixCols}{30}
%TCIDATA{OutputFilter=latex2.dll}
%TCIDATA{Version=5.00.0.2606}
%TCIDATA{CSTFile=amsartci.cst}
%TCIDATA{Created=Friday, December 26, 2014 21:24:56}
%TCIDATA{LastRevised=Saturday, March 07, 2015 16:05:42}
%TCIDATA{<META NAME="GraphicsSave" CONTENT="32">}
%TCIDATA{<META NAME="SaveForMode" CONTENT="1">}
%TCIDATA{BibliographyScheme=BibTeX}
%TCIDATA{<META NAME="DocumentShell" CONTENT="Articles\SW\AMS Journal Article">}

\theoremstyle{plain}
\newtheorem*{acknowledgement}{Acknowledgement}

\newtheorem{corollary}{Corollary}

\newtheorem{definition}{Definition}

\newtheorem{lemma}{Lemma}

\newtheorem{proposition}{Proposition}
\theoremstyle{remark}
\newtheorem{remark}{Remark}
\newtheorem{example}{Example}

\newtheorem{code}{Computer Code}

\numberwithin{equation}{section}
\begin{document}
\title[Oeljeklaus-Toma manifolds]{Oeljeklaus-Toma manifolds and arithmetic invariants}
\author{O. Braunling}
\address{Albert Ludwig University of Freiburg, Eckerstra\ss e 1, D-79104 Freiburg, Germany}
\thanks{The author has benefitted from the GK1821 \textquotedblleft Cohomological
Methods in Geometry\textquotedblright.}
\date{{\today }}

\begin{abstract}
We consider Oeljeklaus-Toma manifolds coming from number fields with precisely
one complex place. Our general theme is to relate the geometry with the
arithmetic. We show that just knowing the fundamental group allows to recover
the number field. We also show that this fails if there are more complex
places. The first homology turns out to relate to an interesting ideal. We
compute the volume in terms of the discriminant and regulator of the number
field. Is there a conceptual reason for this? We explore this and see what
happens if we (entirely experimentally!) regard them as \textquotedblleft baby
siblings\textquotedblright\ of hyperbolic manifolds coming from number fields.
We ask the same questions and obtain similar answers, but ultimately it
remains unclear whether we are chasing ghosts or not.

\end{abstract}
\maketitle

Unless stated otherwise, in this note the term \textquotedblleft
Oeljeklaus-Toma manifold\textquotedblright\ refers to the construction
$X:=X(K;\mathcal{O}_{K}^{\times,+})$ \cite[\S 1]{MR2141693} for an arbitrary
number field $K$ with $s\geq1$ real places and \textit{precisely one} complex
place:%
\[
X:=\left(  \mathbf{H}^{s}\times\mathbf{C}\right)  /(\mathcal{O}_{K}%
\rtimes\mathcal{O}_{K}^{\times,+})\text{,}%
\]
where $\mathbf{H}$ refers to the complex upper half plane. Such a space $X$
cannot carry a K\"{a}hler metric, but becomes a locally conformally K\"{a}hler
(= LCK) manifold by a metric found in \cite{MR2141693}. These generalize the
Inoue surfaces of type $\mathrm{S}^{0}$ \cite[\S 2]{MR0342734}. LCK manifolds
are an exciting generalization of K\"{a}hler manifolds and the Oeljeklaus-Toma
manifolds supply a load of non-K\"{a}hler examples with rich additional
properties \cite{MR3195237}. But even more excitingly, there is already ample
evidence that their complex geometry truly reflects arithmetical properties of
the number field $K$, e.g. \cite{MR2875828}, \cite{MR3236651},
\cite{MR3193953}. This note intends to show that this also applies to the
volume and fundamental group.

For any number field $K$ let $K^{\operatorname*{g}}$ be its Galois closure
over the rationals; for a group $\pi$ write $\pi_{\operatorname*{ab}}%
:=\pi/[\pi,\pi]$ for its maximal abelian quotient, $\pi_{\operatorname*{tor}}$
for its torsion subgroup, and $\pi_{\operatorname*{fr}}:=\pi/\pi
_{\operatorname*{tor}}$ for its maximal torsion-free quotient.\medskip

Firstly, we can reconstruct the number field $K$ just from
the fundamental group of $X$ by the following recipe:

\begin{proposition}
\label{prop_reconstruct}Let $X$ be an Oeljeklaus-Toma manifold, but it
suffices to know $\pi:=\pi_{1}(X,\ast)$. Then we have the short exact sequence%
\begin{equation}
1\longrightarrow\varkappa\longrightarrow\pi\longrightarrow\pi
_{\operatorname*{ab},\operatorname*{fr}}\longrightarrow1\text{,}\label{ly1}%
\end{equation}
where $\varkappa$ is just defined as the kernel. There is a (non-canonical)
isomorphism $\varkappa\simeq\mathbf{Z}^{n}$ for some $n$. Sequence \ref{ly1}
induces an action $\pi_{\operatorname*{ab},\operatorname*{fr}}%
\circlearrowright\varkappa$ and therefore a representation%
\[
\rho:\pi_{\operatorname*{ab},\operatorname*{fr}}\longrightarrow
\operatorname*{GL}\nolimits_{n}(\mathbf{Z})\text{,}%
\]
well-defined up to conjugation. Then $K$ is uniquely determined and
$K^{\operatorname*{g}}=\mathbf{Q}\left(  \{\lambda\}\right)  $, where
$\{\lambda\}$ is the set of complex eigenvalues of $\rho$.
\end{proposition}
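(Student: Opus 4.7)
My plan is to first identify $\pi$ concretely. Since $\mathbf{H}^{s}\times\mathbf{C}$ is contractible and $G:=\mathcal{O}_{K}\rtimes\mathcal{O}_{K}^{\times,+}$ acts on it freely and properly discontinuously by the very construction of $X$, the space $X$ is a $K(G,1)$, so $\pi\cong G$. I would then carry out a direct commutator calculation in the semidirect product:
\[
[(v_{1},u_{1}),(v_{2},u_{2})]=\bigl((u_{1}-1)v_{2}-(u_{2}-1)v_{1},\,1\bigr),
\]
so that $[\pi,\pi]$ coincides with the $\mathcal{O}_{K}$-ideal $\mathfrak{a}$ generated by $\{u-1:u\in\mathcal{O}_{K}^{\times,+}\}$. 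Since $s\geq 1$ supplies a non-trivial totally positive unit, $\mathfrak{a}$ has finite index in $\mathcal{O}_{K}$. Consequently $\pi_{\operatorname*{ab}}\cong(\mathcal{O}_{K}/\mathfrak{a})\times\mathcal{O}_{K}^{\times,+}$; the second factor is $\cong\mathbf{Z}^{s}$ and torsion-free (the only roots of unity in $K$ are $\pm 1$, and $-1$ is not totally positive when $s\geq 1$). This identifies $\pi_{\operatorname*{ab},\operatorname*{fr}}=\mathcal{O}_{K}^{\times,+}$ and $\varkappa=\mathcal{O}_{K}\cong\mathbf{Z}^{s+2}$.

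Next I would observe that the conjugation action of $\pi_{\operatorname*{ab},\operatorname*{fr}}=\mathcal{O}_{K}^{\times,+}$ on $\varkappa=\mathcal{O}_{K}$ is simply multiplication by units, so $\rho(u)$ is the multiplication-by-$u$ endomorphism of $\mathcal{O}_K$. Its eigenvalues over $\mathbf{C}$ are therefore the roots of the characteristic polynomial of $u\in K$ over $\mathbf{Q}$, namely $\{\sigma(u):\sigma\colon K\hookrightarrow\mathbf{C}\}$. Thus $\mathbf{Q}(\{\lambda\})$ is the compositum inside $\mathbf{C}$ of the subfields $\mathbf{Q}(\sigma(\mathcal{O}_{K}^{\times,+}))$ as $\sigma$ runs over the embeddings of $K$. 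Everything then reduces to the key lemma $K=\mathbf{Q}(\mathcal{O}_{K}^{\times,+})$: with it, each factor becomes $\sigma(K)$ and the compositum is the Galois closure $K^{\operatorname*{g}}$.

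The main obstacle is establishing this lemma, and it is precisely where the one-complex-place hypothesis bites. I would argue by contradiction: suppose $K_{0}:=\mathbf{Q}(\mathcal{O}_{K}^{\times,+})\subsetneq K$. Then $\mathcal{O}_{K}^{\times,+}\subseteq\mathcal{O}_{K_{0}}^{\times}$ (such elements lie in $\mathcal{O}_{K_0}$ and have $K_0$-norm $\pm 1$ by the tower formula $N_{K/\mathbf{Q}}(u)=N_{K_{0}/\mathbf{Q}}(u)^{[K:K_{0}]}$), so Dirichlet forces $r_{1}(K_{0})+r_{2}(K_{0})\geq s+1$. But $[K_{0}:\mathbf{Q}]$ is a proper divisor of $s+2$, hence $\leq(s+2)/2$, giving $r_{1}(K_{0})+2r_{2}(K_{0})\leq(s+2)/2$. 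Subtracting yields $r_{2}(K_{0})\leq -s/2<0$, absurd. This delivers $\mathbf{Q}(\{\lambda\})=K^{\operatorname*{g}}$. To conclude, I would recover $K$ itself (not merely $K^{\operatorname*{g}}$) as the $\mathbf{Q}$-subalgebra of $\operatorname{End}_{\mathbf{Q}}(\varkappa\otimes\mathbf{Q})$ generated by the image of $\rho$, which by the same lemma is exactly $K$ acting on itself by multiplication.
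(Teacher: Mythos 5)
Your proposal is correct and follows essentially the same route as the paper: identify $\pi\cong\mathcal{O}_{K}\rtimes\mathcal{O}_{K}^{\times,+}$ via the $K(G,1)$ property, compute $[\pi,\pi]$ as the ideal generated by the $u-1$, deduce $\varkappa=\mathcal{O}_{K}$ and $\pi_{\operatorname*{ab},\operatorname*{fr}}=\mathcal{O}_{K}^{\times,+}$ with $\rho$ the multiplication action, and rule out $\mathbf{Q}(\mathcal{O}_{K}^{\times,+})\subsetneq K$ by the same Dirichlet rank count against the degree bound, which is exactly where the one-complex-place hypothesis enters in the paper as well. Your two small variations are sound and if anything slightly cleaner: you organize the contradiction as $r_{2}(K_{0})\leq -s/2<0$ using the divisor bound $[K_{0}:\mathbf{Q}]\leq(s+2)/2$ (the paper instead pins down $t'=0$, $s'=s+1$ and then uses divisibility), and you recover $K$ as the $\mathbf{Q}$-subalgebra of $\operatorname{End}_{\mathbf{Q}}(\varkappa\otimes\mathbf{Q})$ generated by $\operatorname{im}\rho$, which avoids the paper's appeal to a generically chosen primitive unit.
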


Quite differently, if we take the spaces $X(K;U)$ of \cite{MR2141693} with
$t>1$ complex places into consideration, we will exhibit two diffeomorphic
such manifolds whose underlying number fields are different, even after taking
the Galois closure. So no reconstruction of $K^{\operatorname*{g}}$ as above
can be possible for $t>1$. See Example
\ref{Example_CannotReconstructForMoreComplexPlaces} in the text.\medskip

Secondly, it was already shown in \cite{MR2141693} that the first Betti number
of $X$ agrees with the number of real places of $K$. It turns out that the
torsion in the first homology group carries also some rather subtle arithmetic information:

\begin{proposition}
\label{prop_TorsionInH1}Let $X$ be an Oeljeklaus-Toma manifold. Then there is
a canonical isomorphism%
\[
H_{1}(X,\mathbf{Z})_{\operatorname*{tor}}\overset{\sim}{\longrightarrow
}\mathcal{O}_{K}/J(\mathcal{O}_{K}^{\times,+})\text{,}%
\]
where $J(\mathcal{O}_{K}^{\times,+})$ is the ideal generated by all $1-u$ with
$u\in\mathcal{O}_{K}^{\times,+}$. In particular, $H_{1}(X,\mathbf{Z}%
)_{\operatorname*{tor}}$ is generated by at most $s+2$ elements.
\end{proposition}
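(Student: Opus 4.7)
The plan is to identify $X$ with an Eilenberg--MacLane space $K(\Gamma,1)$ for $\Gamma := \mathcal{O}_{K}\rtimes\mathcal{O}_{K}^{\times,+}$, reducing everything to a purely group-theoretic computation of the abelianization. The universal cover $\mathbf{H}^{s}\times\mathbf{C}$ is contractible, and the Oeljeklaus--Toma construction is arranged precisely so that $\Gamma$ acts on it freely and properly discontinuously; hence $H_{1}(X,\mathbf{Z})=\Gamma_{\operatorname{ab}}$, and the only task is to abelianize the semidirect product.

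Writing $A:=\mathcal{O}_{K}$, $U:=\mathcal{O}_{K}^{\times,+}$, and using the group law $(a_{1},u_{1})(a_{2},u_{2})=(a_{1}+u_{1}a_{2},u_{1}u_{2})$, a direct commutator calculation yields
\[
[(a,1),(0,u)] \;=\; ((1-u)a,\,1)\in A.
\]
Since both $A$ and $U$ are abelian, these are the only new relations one picks up, so $[\Gamma,\Gamma]$ is the additive subgroup of $A$ generated by $(1-u)a$ as $a$ ranges over $\mathcal{O}_{K}$ and $u$ over $U$; but this subgroup is precisely the ideal $J(\mathcal{O}_{K}^{\times,+})\subseteq\mathcal{O}_{K}$. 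The inclusion $U\hookrightarrow\Gamma$, $u\mapsto(0,u)$, canonically splits the projection $\Gamma\twoheadrightarrow U$, and accordingly
\[
\Gamma_{\operatorname{ab}} \;\simeq\; \mathcal{O}_{K}/J(\mathcal{O}_{K}^{\times,+}) \;\oplus\; U.
\]

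To read off the torsion I would observe two things. First, $U$ is torsion-free: any root of unity in $K$ is $\pm 1$ because $K$ has at least one real place, and $-1$ is not totally positive, so $U_{\operatorname{tor}}=\{1\}$ (and $U$ is free of rank $s$ by Dirichlet). Second, $J(\mathcal{O}_{K}^{\times,+})$ has finite index in $\mathcal{O}_{K}$: for $s\geq1$, Dirichlet supplies some non-torsion $u\in U$, whence $(1-u)\subseteq J(\mathcal{O}_{K}^{\times,+})$ is a nonzero principal ideal and therefore of finite index in $\mathcal{O}_{K}$. Consequently $\mathcal{O}_{K}/J(\mathcal{O}_{K}^{\times,+})$ is already finite, equals its own torsion, and is generated by at most $[K:\mathbf{Q}]=s+2t=s+2$ elements (the images of a $\mathbf{Z}$-basis of $\mathcal{O}_{K}$). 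The only step that requires a touch of care is the identification of $[\Gamma,\Gamma]$ with the full \emph{ideal} $J(\mathcal{O}_{K}^{\times,+})$ rather than merely a $\mathbf{Z}$-submodule of $\mathcal{O}_{K}$; this works because the generating set $\{(1-u)a : u\in U,\,a\in\mathcal{O}_{K}\}$ already absorbs multiplication by $\mathcal{O}_{K}$. Beyond this minor bookkeeping, I do not foresee any substantive obstacle.
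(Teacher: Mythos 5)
Your proof is correct, and its engine is the same as the paper's: the commutator computation $[(a_1,u_1),(a_2,u_2)]=((1-u_2)a_1-(1-u_1)a_2,1)$ identifying $[\Gamma,\Gamma]$ with the ideal $J(\mathcal{O}_K^{\times,+})$ sitting inside $\mathcal{O}_K$ (the paper isolates this as Proposition \ref{Prop_StructureOfCommutator}), followed by the two arithmetic inputs that $J$ has finite index and that $\mathcal{O}_K^{\times,+}$ is torsion-free. Where you diverge is the endgame. The paper (Proposition \ref{Prop_KappaAgreesWithOK}, after Parton--Vuletescu) compares the abelianization sequence with the semidirect-product sequence in a two-row diagram and runs a snake-lemma-style chase -- delicate enough for non-abelian groups that the paper devotes a footnote to justifying why the chase is legitimate here -- concluding only that $0\to\mathcal{O}_K/J\to H_1(X,\mathbf{Z})\to U\to 0$ is exact with $\mathcal{O}_K/J$ the torsion. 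You instead observe that since $[\Gamma,\Gamma]=J\subseteq\mathcal{O}_K$ and $(u-1)a\in J$, the induced $U$-action on $\mathcal{O}_K/J$ is trivial, so the abelianization is the \emph{direct} product $(\mathcal{O}_K/J)\oplus U$, canonically split by $u\mapsto(0,u)$; the torsion is then read off immediately. This is cleaner and entirely elementary, sidestepping the non-abelian diagram chase; what the paper's route buys in exchange is that the same chase simultaneously identifies the kernel $\varkappa$ of $\pi\to\pi_{\operatorname{ab},\operatorname{fr}}$ with $\mathcal{O}_K$ \emph{intrinsically} (without reference to the splitting, which is extra data not visible from $\pi$ alone), and that identification is what powers the reconstruction argument of Proposition \ref{prop_reconstruct}. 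One small point of hygiene in your write-up: the phrase ``these are the only new relations one picks up'' should be replaced by the observation that every commutator $((1-u_2)a_1-(1-u_1)a_2,1)$ is a $\mathbf{Z}$-combination of your generators $((1-u)a,1)$, and conversely each such generator is itself a commutator $[(a,1),(0,u^{-1})]$ up to sign -- you do in effect handle the ideal-versus-subgroup issue, which is the only place this could go wrong.
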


See Prop. \ref{Prop_KappaAgreesWithOK} in the text for a more general version.
Although not spelled out explicitly in their article \cite{MR2875828}, this result
follows directly from the ideas of Parton and Vuletescu. We give a
number of examples where this torsion subgroup is quite large, notably%
\[
\#H_{1}(X,\mathbf{Z})_{\operatorname*{tor}}=2^{2}\cdot5^{2}\cdot7\cdot
967\cdot1649120827309715616889\text{.}%
\]
See Example \ref{example_ComputeJ2} below. We also show that for any given
$m\geq1$ there exists an Inoue surface of type \textrm{S}$^{0}$ with
$H_{1}(X,\mathbf{Z})_{\operatorname*{tor}}\cong\mathbf{Z}/m$, see Prop.
\ref{prop_ConstructInoueSurfaceWithTorsionZm}. As far as I can tell, there is
no easy general formula to compute the order of the torsion group. However, a
computer can determine it rather quickly in any given case.\medskip

Thirdly, if we use the canonical metric on $X$, the volume of $X$ relates $-$
in a rather simple way $-$ to the picture of Dirichlet's analytic class number
formula. This leads to a connection to the zeta function of the number field, namely:

\begin{proposition}
\label{prop_main}Let $K$ be a number field with $s\geq1$ real places and one
complex place and $X:=X(K;\mathcal{O}_{K}^{\times,+})$ the associated
Oeljeklaus-Toma manifold equipped with its canonical metric. Then%
\begin{align}
\operatorname*{Vol}\left(  X\right)   &  =\frac{(s+1)}{4^{s}\cdot2^{s^{2}}%
}\cdot\sqrt{\left\vert \triangle_{K/\mathbf{Q}}\right\vert }\cdot
R_{K}\label{l6}\\
&  =\frac{(s+1)}{2^{3s+s^{2}}h_{K}}\cdot\left\vert \triangle_{K/\mathbf{Q}%
}\right\vert \cdot\pi^{-1}\cdot\operatorname*{res}\nolimits_{s=1}\zeta
_{K}(s)\text{,} \label{lcy3}%
\end{align}
where $\triangle_{K/\mathbf{Q}}$ denotes the discriminant of $K$, $R_{K}$ is
the Dirichlet regulator, $h_{K}$ the class number of $K$, $\zeta_{K}$ the
Dedekind zeta function of $K$.
\end{proposition}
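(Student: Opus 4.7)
The plan is to compute $\operatorname*{Vol}(X)$ by integrating the canonical LCK volume form directly over an explicit fundamental domain of the $(\mathcal{O}_K \rtimes \mathcal{O}_K^{\times,+})$-action on the universal cover $\mathbf{H}^s \times \mathbf{C}$, and then to deduce (\ref{lcy3}) from (\ref{l6}) via Dirichlet's analytic class number formula.

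First, I would write down the invariant Kähler form $\omega$ from \cite[\S 2]{MR2141693} in coordinates $(z_1,\ldots,z_s,w)$ with $z_k = x_k + iy_k$ and extract the top wedge $\omega^{s+1}/(s+1)!$. Its invariance under $u \in \mathcal{O}_K^{\times,+}$ is forced by the norm relation $\prod_k \sigma_k(u)\cdot|\sigma_{s+1}(u)|^2 = 1$, and after converting $dz_k\wedge d\bar z_k = -2i\,dx_k dy_k$ and $dw\wedge d\bar w = -2i\,dw_1 dw_2$ the Riemannian density acquires the shape of a numerical prefactor times $(y_1\cdots y_s)^{-1}\, dx_1 dy_1\cdots dx_s dy_s \, dw_1 dw_2$.

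Next, decompose the fundamental domain in two stages. Modding out the translation action of $\mathcal{O}_K \hookrightarrow \mathbf{R}^s\times \mathbf{C}$ (Minkowski embedding) replaces the $(x_1,\ldots,x_s,w_1,w_2)$-directions by a real $(s+2)$-torus of covolume $2^{-r_2}\sqrt{|\triangle_{K/\mathbf{Q}}|} = \tfrac12\sqrt{|\triangle_{K/\mathbf{Q}}|}$. Since the density depends only on the $y_k$, Fubini reduces us to
\[
    \int_{\mathbf{R}_{>0}^{s}\big/\mathcal{O}_K^{\times,+}} \frac{dy_1 \cdots dy_s}{y_1\cdots y_s}.
\]
Passing to the logarithmic coordinates $t_k := \log y_k$ converts the integrand to plain Lebesgue measure while turning the multiplicative $\mathcal{O}_K^{\times,+}$-action into the translation action on $\mathbf{R}^s$ by the rank-$s$ lattice $L(\mathcal{O}_K^{\times,+}) := \{(\log \sigma_1(u),\ldots,\log \sigma_s(u)) : u\in \mathcal{O}_K^{\times,+}\}$. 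The integral is therefore the covolume of this lattice in $\mathbf{R}^s$, which by the determinantal definition of the Dirichlet regulator — and after carefully accounting for the index $[\mathcal{O}_K^\times/\{\pm 1\}:\mathcal{O}_K^{\times,+}]$ and the choice of deleted archimedean column — equals $R_K$ up to a power of~$2$.

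Multiplying the three numerical contributions produces (\ref{l6}). To obtain (\ref{lcy3}), apply Dirichlet's class number formula: for $r_1 = s$, $r_2 = 1$, and $w_K = 2$ (since $K$ admits a real embedding),
\[
   \operatorname*{res}\nolimits_{s=1} \zeta_K(s) = \frac{2^{s}\pi\, h_K R_K}{\sqrt{|\triangle_{K/\mathbf{Q}}|}};
\]
substituting $R_K = \sqrt{|\triangle_{K/\mathbf{Q}}|}\operatorname*{res}\nolimits_{s=1}\zeta_K(s)/(2^s\pi h_K)$ into (\ref{l6}) collapses $\sqrt{|\triangle_{K/\mathbf{Q}}|}\cdot R_K$ into $|\triangle_{K/\mathbf{Q}}|\cdot\pi^{-1}\operatorname*{res}\nolimits_{s=1}\zeta_K(s)/(2^sh_K)$, and the extra $2^s$ absorbs into the exponent $3s+s^2$. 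The main obstacle throughout is the careful bookkeeping of the powers of $2$ coming from these three sources, together with isolating the $(s+1)$ numerator factor, which I expect to emerge from the combinatorial expansion of $\omega^{s+1}$ once the precise normalization of the LCK form from \cite{MR2141693} is fixed; once that bookkeeping is settled, the remainder of the proof is a direct change of variables.
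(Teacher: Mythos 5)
Your overall route is the same as the paper's: an invariant density integrated over a fundamental domain that is a Minkowski torus of covolume $\frac{1}{2}\sqrt{|\triangle_{K/\mathbf{Q}}|}$ in the $(x_{1},\ldots,x_{s},w)$-directions, fibered over a logarithmic unit cell (cf.\ Cor.\ \ref{cor_fund_domain_in_xy_coords}), followed by the class number formula with $w_{K}=2$; even your choice to work directly with $\mathcal{O}_{K}\rtimes\mathcal{O}_{K}^{\times,+}$ rather than with $(\mathcal{O}_{K}^{\times,+})^{2}$ in Iwasawa coordinates is anticipated by the paper's closing remark to its proof. However, your first step contains a genuine error: the K\"ahler form $\omega$ of \cite{MR2141693} is \emph{not} invariant under the units. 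The norm relation $\prod_{k\leq s}\sigma_{k}(u)\cdot|\sigma_{s+1}(u)|^{2}=1$ yields exactly $u^{\ast}\omega=|\sigma_{s+1}(u)|^{2}\,\omega$, a non-trivial homothety --- this is the very reason $X$ is only locally conformally K\"ahler, since an invariant K\"ahler form would descend and make $X$ K\"ahler, which is impossible. Consequently $\omega^{s+1}/(s+1)!$ has density proportional to $(y_{1}\cdots y_{s})^{-(s+2)}$, not $(y_{1}\cdots y_{s})^{-1}$, it is likewise non-invariant ($u^{\ast}$ scales it by $|\sigma_{s+1}(u)|^{2(s+1)}$), and its integral over a fundamental domain is not a well-defined volume at all. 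The density you then write down is the correct one, but it belongs to the top power of the conformally rescaled form $\tilde{\omega}:=(y_{1}\cdots y_{s})\,\omega$, i.e.\ to the volume form $\widetilde{vol}$ of the canonical LCK metric: the rescaling multiplies $\det(g_{kl})$ by $(y_{1}\cdots y_{s})^{s+1}$, turning $(y_{1}\cdots y_{s})^{-(s+2)}$ into $(y_{1}\cdots y_{s})^{-1}$, and the norm relation proves invariance of precisely this rescaled density, not of $\omega$. This conformal rescaling --- the distinction between the K\"ahler potential's volume form and the LCK volume form, which is what ``canonical metric'' means in the statement --- is the missing idea; it also shifts the power of $2$ in the prefactor, so the constant $(s+1)/(4^{s}\cdot2^{s^{2}})$ cannot come out of the plan as literally stated.

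Two smaller points. You defer every power of $2$ to ``bookkeeping'', but that bookkeeping (the $2^{-t}$ in the Minkowski covolume, the normalization $\phi_{1}=2^{-s}\prod_{j}y_{j}^{-1}$, the unit-lattice factor) is the entire content of the exact constant in Equation \ref{l6}, so even after the correction above the proposal does not establish the formula as stated. On the other hand, your caution about the index $[\mathcal{O}_{K}^{\times}/\{\pm1\}:\mathcal{O}_{K}^{\times,+}]$ is well placed: it need not be $1$ when $t=1$ (in $\mathbf{Q}[T]/(T^{4}-T-1)$ the unit $\overline{T}$ has real embeddings of opposite signs, so neither of $\pm\overline{T}$ is totally positive), and the covolume of the log-lattice of $\mathcal{O}_{K}^{\times,+}$ is this index times $R_{K}$ --- a subtlety the paper's own proof passes over when it equates the covolume for $(\mathcal{O}_{K}^{\times,+})^{2}$ with $2^{s}R_{K}$. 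Finally, expecting the factor $(s+1)$ from the expansion of the top wedge is fine: in the paper it appears as the determinant identity $\sum_{\sigma\in\Sigma_{s}}\operatorname{sgn}(\sigma)2^{\#\{j\mid\sigma(j)=j\}}=s+1$, which is the same computation.
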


The essence of this computation is Equation \ref{l6}. The second line is just
a reformulation in terms of the analytic class number formula. The volumes
behave in a certain fashion, quite similar to analogous results in the theory
of hyperbolic manifolds:

\begin{proposition}
\label{prop_boundedminvolume}Among all Oeljeklaus-Toma manifolds $X$ of some
fixed dimension, there is a smallest possible volume, realized by at least
one, but at most finitely many Oeljeklaus-Toma manifolds. There is a unique
smallest Oeljeklaus-Toma surface, its volume is%
\[
\operatorname*{Vol}\left(  X\right)  =0.337146\ldots
\]
It is the one coming from the number field%
\[
K:=\mathbf{Q}[T]/(T^{3}-T+1)\text{.}%
\]

\end{proposition}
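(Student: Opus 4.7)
The plan is to reduce everything to Proposition \ref{prop_main} and then invoke two classical tools from algebraic number theory. Fixing the complex dimension of $X$ amounts to fixing $s$, hence to fixing the degree $[K:\mathbf{Q}] = s+2$ and the signature $(s,1)$. Since the prefactor $(s+1)/(4^{s}\cdot 2^{s^{2}})$ in (\ref{l6}) depends only on $s$, minimizing $\operatorname{Vol}(X)$ over all Oeljeklaus-Toma manifolds of this fixed dimension is equivalent to minimizing
\[
D(K) := \sqrt{|\triangle_{K/\mathbf{Q}}|}\cdot R_{K}
\]
over all number fields $K$ of signature $(s,1)$.

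Next I would establish the first assertion (existence of a minimum, realized by finitely many manifolds). Two ingredients suffice. First, Hermite's theorem: for any $B>0$ only finitely many number fields of degree $s+2$ have $|\triangle_{K/\mathbf{Q}}| \leq B$. Second, an effective lower bound on regulators in fixed degree, due to Remak--Friedman (or Zimmert), yielding $R_{K} \geq c(s) > 0$ for an explicit positive constant. Together these force $D(K) \to \infty$ as $|\triangle_{K/\mathbf{Q}}| \to \infty$, so $D$ attains its infimum and the set of minimizing $K$ is finite. By Proposition \ref{prop_reconstruct}, distinct $K$ give non-isomorphic fundamental groups and hence non-diffeomorphic $X$, so finiteness passes from $K$ to $X$.

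For the explicit surface case $s=1$ I would enumerate complex cubic fields by absolute discriminant. The smallest is the one generated by the \emph{plastic number}, namely $K = \mathbf{Q}[T]/(T^{3}-T-1)$, which is isomorphic to $\mathbf{Q}[T]/(T^{3}-T+1)$ via $T\mapsto -T$ and has $|\triangle_{K/\mathbf{Q}}|=23$. Its unit group has rank one with fundamental unit the real root $\alpha\approx 1.3247$, so $R_{K} = \log\alpha$. Substituting into (\ref{l6}) gives
\[
\operatorname{Vol}(X) = \tfrac{1}{4}\sqrt{23}\,\log\alpha = 0.337146\ldots
\]
To conclude uniqueness I would use the Remak--Friedman lower bound on $R_{K}$ for complex cubic fields to extract an effective upper bound $B$ on $|\triangle_{K/\mathbf{Q}}|$ for any putative smaller example, and then finish by directly checking the finitely many complex cubic fields with $|\triangle_{K/\mathbf{Q}}|\leq B$ (using e.g.\ PARI/GP or LMFDB tables).

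The hard part will be the last step: choosing $B$ small enough that the finite check is short, which depends on having a reasonably sharp lower bound on $R_{K}$ in degree three. The existence statement and the numerical evaluation at the plastic field are routine once (\ref{l6}) is in hand.
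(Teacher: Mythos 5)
Your proposal is correct and takes essentially the same route as the paper, whose actual work is in Prop.~\ref{prop_intext_minvol}: reduce via Equation~\ref{l6} to minimizing $\sqrt{\left\vert \triangle_{K/\mathbf{Q}}\right\vert}\cdot R_{K}$ over signature-$(s,1)$ fields, combine Hermite--Minkowski finiteness with the Remak--Zimmert--Friedman regulator lower bounds to get existence and finiteness of minimizers (with distinctness of manifolds from distinct fields via Prop.~\ref{prop_reconstruct}), and settle $s=1$ by comparison with the unique field of discriminant $-23$. The one step you leave open --- choosing $B$ so that the finite check of cubic fields with $\left\vert \triangle_{K/\mathbf{Q}}\right\vert \leq B$ is short --- is exactly where the paper invokes Friedman's explicit bound $R_{K}\geq 0.28$ for $s=t=1$ and $\left\vert \triangle_{K/\mathbf{Q}}\right\vert < 18.7^{3}$ (plus $R_{K}>\frac{1}{4}$ beyond that range), which yields $\operatorname*{Vol}(X)>\frac{0.28}{4}\sqrt{24}>0.3429>0.337146\ldots$ for every $\left\vert \triangle_{K/\mathbf{Q}}\right\vert \geq 24$, so the enumeration collapses to $B=23$ and no computer check of larger discriminants is needed at all.
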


With the assistance of the computer we can also provide the smallest possible
Oeljeklaus-Toma manifolds in dimensions $\leq6$. In all these cases the
smallest manifold always comes from the number field with given $s$ and $t=1$
whose discriminant has the smallest absolute value. The reader should not
hasten to believe that this might be a general fact. There are pairs with
increasing discriminant, while the volume decreases. See
\S \ref{sect_SmallestVolume}.\medskip

We should also explain why one could expect the volume of an Oeljeklaus-Toma
manifold to be related to the arithmetic of $K$ at all $-$ by parallels to a
much more complicated, but also much older theory:

\section{\label{sect_toymodelproducthyperbolic}Parallels to arithmetic
hyperbolic manifolds}

\subsection{Phenomenology}

We want to look at the geometry of Oeljeklaus-Toma manifolds by investigating
to what extent there exist some parallels to the behaviour of hyperbolic
manifolds in dimension $\geq3$. In the following list it might not be clear
(especially to the author of these lines) which analogies are meaningful, and
which are accidents\footnote{Especially since mathematics does not have
accidents. It has, however, plenty of room for misleading analogies.}:

\begin{enumerate}
\item Going through the universal covering space, we have the presentation%
\[
X=\mathbf{H}_{n}/\Gamma\qquad\text{versus}\qquad X^{\prime}=(\mathbf{H}%
^{s}\times\mathbf{C})/\Gamma^{\prime}\text{,}%
\]
where $X$ denotes a hyperbolic $n$-manifold; $\mathbf{H}_{n}$ denotes
hyperbolic $n$-space with the hyperbolic metric. On the right hand side
$X^{\prime}$ is an Oeljeklaus-Toma manifold and $\mathbf{H}^{s}\times
\mathbf{C}$ is equipped with the Oeljeklaus-Toma locally conformally
K\"{a}hler metric. In either case, $\Gamma$ is a discrete, finite covolume
subgroup of the relevant isometry groups.

\item In dimension $\geq3$ finite volume hyperbolic manifolds are determined
(up to isometry) by their fundamental groups (Mostow-Prasad Rigidity). For
Oeljeklaus-Toma manifolds we find in Proposition \ref{prop_reconstruct} that
they are also uniquely determined by their fundamental groups. Up to
diffeomorphism this is just Mostow Rigidity for real solvmanifolds, but we
even get the number field $K$ back.

\item As a consequence of Rigidity, the volume of a hyperbolic $n$-manifold
for $n\geq3$ is a \textit{topological} invariant. Similarly, once we pick an
overall normalization for the K\"{a}hler potential on $\mathbf{H}^{s}%
\times\mathbf{C}$, diffeomorphic Oeljeklaus-Toma manifolds admit a canonical
notion of volume.

\item Among the hyperbolic manifolds, there is the special class of
`arithmetic' ones. They come from arithmetic groups defined through number
fields $K$, and their volume relates to the special value $\zeta_{K}(2)$ of
the zeta function of the number field. For example, there is Humbert's
formula, discovered in 1919,%
\begin{equation}
\operatorname*{Vol}\left(  \mathbf{H}_{3}/\operatorname*{PSL}\nolimits_{2}%
\left(  \mathcal{O}_{K}\right)  \right)  =\frac{1}{4}\cdot\left\vert
\triangle_{K/\mathbf{Q}}\right\vert ^{\frac{3}{2}}\cdot\pi^{-2}\cdot\zeta
_{K}(2) \label{ly8}%
\end{equation}
for $K$ an imaginary quadratic number field. This can be generalized broadly,
for example encompassing number fields with $s\geq1$ real and $t=1$ complex
places. One switches to product-hyperbolic geometries,%
\[
\Gamma\circlearrowright\mathbf{H}^{s}\times\mathbf{H}_{3}^{t}\text{,}%
\qquad\qquad X:=\left(  \mathbf{H}^{s}\times\mathbf{H}_{3}^{t}\right)  /\Gamma
\]
and still gets volume formulas of the shape%
\begin{equation}
\operatorname*{Vol}\left(  X\right)  =(\text{rational factor})\cdot\left\vert
\triangle_{K/\mathbf{Q}}\right\vert ^{\frac{3}{2}}\cdot\pi^{-s-2t}\cdot
\zeta_{K}(2)\text{.} \label{ly3}%
\end{equation}
See \cite[Thm. 3.2]{MR3117524} for a whole panorama of related volume
computations. On the other hand, Proposition \ref{prop_main} shows that the
volume of Oeljeklaus-Toma manifolds is%
\[
(\text{rational factor})\cdot\pi^{-1}\cdot\operatorname*{res}\nolimits_{s=1}%
\zeta_{K}(s)\text{.}%
\]
This is certainly far less exciting than obtaining $\zeta_{K}(2)$, but it is
pleasant that to see that the Oeljeklaus-Toma LCK\ metric, i.e. a metric with
special complex-geometric properties, \textit{entirely naturally} leads to
this kind of volume formula. In some sense, this behaviour was already
built-in within Tricerri's LCK metric for the $\mathrm{S}^{0}$ Inoue surface.

\item Finally, the work of Thurston and Jorgensen \cite{MR636516} has shown a
very interesting structure on the volume distribution among hyperbolic
$3$-manifolds. In particular, there is a unique hyperbolic $3$-manifold of
smallest volume. Asking the same question for Oeljeklaus-Toma manifolds, one
also finds that there is (in each dimension) a smallest volume, attained by at
least one, and at most finitely many Oeljeklaus-Toma manifolds. Incidentally,
the smallest hyperbolic $3$-manifold (the Weeks manifold) is arithmetic, and
comes from the \textit{same} number field as the smallest Oeljeklaus-Toma
manifold. Quite possibly, however, this is just a sporadic effect of small numbers.
\end{enumerate}

\[%
\begin{tabular}
[c]{r|l}%
hyperbolic $3$-manifold & Oeljeklaus-Toma\\\hline
universal cover $\mathbf{H}_{3}$ & universal cover $\mathbf{H}\times
\mathbf{C}$\\
$\operatorname*{SL}\nolimits_{2}(\mathcal{O}_{K})$ & $\supset%
\begin{pmatrix}
a & b\\
& a^{-1}%
\end{pmatrix}
$\\
hyperbolic metric & locally conformally\ K\"{a}hler\\
$\zeta_{K}(2)$ & residue at $\zeta_{K}(1)$,
\end{tabular}
\]
There is one thing I should say very clearly: The above analogies might all be
purely phenomenological. Many of them could be attributed to Oeljeklaus-Toma
manifolds being real solvmanifolds. However, it remains curious that the LCK
metric (i.e. a metric chosen for its special holomorphic features) leads to a
zeta value volume so naturally.

\begin{remark}
On the other hand, it must be said that all of the above analogies completely
collapse if one considers number fields with $t>1$ complex places as well:

\begin{enumerate}
\item In this case the formation of $X(K;U)$ depends on a choice, which is
perhaps un-natural.

\item It seems to be a very delicate question whether there exists some $U$ so
that $X(K;U)$ can be equipped with a LCK metric. See Remark
\ref{rmk_XKU_can_it_be_lck} for recent work on this issue. This makes it
difficult to speak of volume at all.

\item We show in Example \ref{rmk_XKU_can_it_be_lck} that a matching of
fundamental groups, although it still implies being diffeomorphic, does not
allow to reconstruct the field $K$. So even if one can come up with a
normalized LCK metric of some sort, like the one of Battisti \cite[Appendix]%
{MR3193953}, it seems improbable that there is a unique choice within each
diffeomorphism class.
\end{enumerate}
\end{remark}

\section{Preparations}

We shall exclusively use Poincar\'{e}'s upper half plane model for the
hyperbolic $2$-space $\mathbf{H}$. The Iwasawa decomposition of the group
$\operatorname*{SL}\nolimits_{2}(\mathbf{R})$ is the homeomorphism
$\operatorname*{SL}\nolimits_{2}(\mathbf{R})\approx K\times A\times N$, where%
\[
K:=%
\begin{pmatrix}
\cos\theta & -\sin\theta\\
\sin\theta & \cos\theta
\end{pmatrix}
\qquad A:=%
\begin{pmatrix}
a & \\
& a^{-1}%
\end{pmatrix}
\qquad N:=%
\begin{pmatrix}
1 & b\\
& 1
\end{pmatrix}
\text{,}%
\]
(for $a>0$, $b\in\mathbf{R}$) are three subgroups; $K=\operatorname*{SO}%
\nolimits_{2}(\mathbf{R})$ is a maximal compact subgroup. We can use this to
obtain a very pleasant parametrization of the complex upper half plane
$\mathbf{H}:=\{x+iy\mid y>0\}\subset\mathbf{C}$, namely%
\[
A\cdot N=\frac{\operatorname*{SL}\nolimits_{2}(\mathbf{R})}{K}=\frac
{\operatorname*{SL}\nolimits_{2}(\mathbf{R})}{\operatorname*{SO}%
\nolimits_{2}(\mathbf{R})}=\mathbf{H}\text{.}%
\]
Let us recall the details: $\operatorname*{SL}\nolimits_{2}(\mathbf{R})$ acts
on $\mathbf{H}$ via the standard M\"{o}bius action, i.e.%
\[%
\begin{pmatrix}
a & b\\
c & d
\end{pmatrix}
\cdot z:=\frac{az+b}{cz+d}%
\]
for $z\in\mathbf{H}$ a complex number. Since%
\[
A\cdot N=\left\{  \left.
\begin{pmatrix}
a & b\\
& a^{-1}%
\end{pmatrix}
\right\vert a\in\mathbf{R}_{>0}^{\times}\right\}  \text{,}%
\]
the orbit of $i\in\mathbf{H}$ under the action of $A\cdot N$ unwinds as
\begin{equation}
A\cdot N\cong\mathbf{H}\qquad%
\begin{pmatrix}
\sqrt{y} & \frac{x}{\sqrt{y}}\\
& \frac{1}{\sqrt{y}}%
\end{pmatrix}
\cdot i=x+iy\qquad\in\mathbf{H} \label{l2}%
\end{equation}
for $x\in\mathbf{R}$ and $y>0$; this is obviously simply transitive.

\section{Oeljeklaus-Toma manifolds}

Let $K$ be a number field with $s$ real places and $t$ complex places. Let
$n:=[K:\mathbf{Q}]=s+2t$; the ring of integers $\mathcal{O}_{K}$ is a free
$\mathbf{Z}$-module of rank $n$ and the group of units $\mathcal{O}%
_{K}^{\times}$ decomposes as $\mathcal{O}_{K,\operatorname*{tor}}^{\times
}\times\mathbf{Z}^{s+t-1}$, where the torsion subgroup $\mathcal{O}%
_{K,\operatorname*{tor}}^{\times}$ is the group of roots of unity in $K$.
Whenever $s\geq1$, we necessarily have $\mathcal{O}_{K,\operatorname*{tor}%
}^{\times}=\{\pm1\}$. Suppose $\sigma_{1},\ldots,\sigma_{s}:K\rightarrow
\mathbf{R}$ are the real embeddings and $\sigma_{s+1},\ldots,\sigma
_{s+t},\overline{\sigma_{s+1}},\ldots,\overline{\sigma_{s+t}}:K\rightarrow
\mathbf{C}$ the $t$ complex conjugate pairs of complex embeddings (the
numbering and choice of complex conjugate partners is non-canonical, but does
not affect any of the following). Write $\mathcal{O}_{K}^{\times,+}%
:=\{x\in\mathcal{O}_{K}^{\times}\mid\sigma_{j}(x)>0$ for $1\leq j\leq s\}$ for
the group of totally positive units.

Suppose $s\geq1$ and $t\geq1$. Following Oeljeklaus and Toma \cite{MR2141693}
we define%
\begin{equation}
X(K;U):=\frac{\mathbf{H}^{s}\times\mathbf{C}^{t}}{\mathcal{O}_{K}\rtimes
U}\text{,} \label{ly5}%
\end{equation}
where $U\subseteq\mathcal{O}_{K}^{\times,+}$ is a suitably chosen subgroup; it
has to be \textquotedblleft admissible\textquotedblright\ in the sense of
\cite{MR2141693}. The semi-direct product $\mathcal{O}_{K}\rtimes U$ is formed
by letting $U$ act on $\mathcal{O}_{K}$ by multiplication. See \cite[\S 1]%
{MR2141693} for details. It is shown that the action is properly
discontinuous, full rank, and holomorphic. As a result, $X(K;U)$ canonically
becomes a compact complex manifold. In the present text we will mostly deal
with the case $t=1$. It is special in two ways: Firstly, there is a canonical
choice for $U$ because $U:=\mathcal{O}_{K}^{\times,+}$ is always an admissible
subgroup. Secondly, these $X(K;U)$ admit an LCK\ metric. We will review this
in \S \ref{sect_InvariantVolumeForm}.

\begin{remark}
\label{rmk_XKU_can_it_be_lck}In fact by the work of Vuletescu \cite{MR3236651}
and Battisti \cite[Appendix, Theorem 8]{MR3193953}, we now know that $X(K;U)$
as in Equation \ref{ly5} admits an LCK metric if and only if for all
$\alpha\in U$ one has $\left\vert \sigma_{s+1}(\alpha)\right\vert
=\cdots=\left\vert \sigma_{s+t}(\alpha)\right\vert $. For the case we are
mostly interested in, i.e. $t=1$, this condition is trivially met. See
Dubickas \cite{MR3193953} for an extensive study whether this condition can be
satisfied for $t>1$.
\end{remark}

We return to the case of $t=1$ complex places and $U:=\mathcal{O}_{K}%
^{\times,+}$. Firstly, let us point out that the action underlying the
definition of $X(K;\mathcal{O}_{K}^{\times,+})$ can be written in a different fashion:

\begin{lemma}
\label{lemma_sl2_vs_OT_action}On the $\mathbf{H}$-factors in $\mathbf{H}%
^{s}\times\mathbf{C}$ the M\"{o}bius action of the subgroup%
\begin{equation}
\left\{  \left.
\begin{pmatrix}
a & b\\
& a^{-1}%
\end{pmatrix}
\right\vert a\in\mathcal{O}_{K}^{\times,+}\text{ and }b\in\mathcal{O}%
_{K}\right\}  \subseteq\operatorname*{SL}\nolimits_{2}\left(  \mathcal{O}%
_{K}\right)  \label{ly4}%
\end{equation}
under the embedding%
\begin{equation}
\operatorname*{SL}\nolimits_{2}\left(  \mathcal{O}_{K}\right)  \subseteq
\prod_{v\mid\infty}\operatorname*{SL}\nolimits_{2}\left(  K_{v}\right)
\subseteq\underset{\circlearrowright\mathbf{H}^{s}}{\operatorname*{SL}%
\nolimits_{2}\left(  \mathbf{R}\right)  ^{s}}\times\underset{\circlearrowright
\mathbf{C}}{\operatorname*{Aut}(\mathbf{C})} \label{ly6}%
\end{equation}
agrees with the Oeljeklaus-Toma action of the subgroup $\mathcal{O}_{K}%
\rtimes(\mathcal{O}_{K}^{\times,+})^{2}$. In particular, the subgroup in
Equation \ref{ly4} is isomorphic to the semi-direct product $\mathcal{O}%
_{K}\rtimes(\mathcal{O}_{K}^{\times,+})^{2}$.
\end{lemma}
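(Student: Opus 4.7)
The plan is to unpack both actions in explicit coordinates, identify them term-by-term, and then verify that the induced set-theoretic map between the two descriptions of the group is a bijective homomorphism.

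First, I would write out the Oeljeklaus-Toma action of a pair $(c,u)\in\mathcal{O}_{K}\rtimes\mathcal{O}_{K}^{\times,+}$ on $(z_{1},\ldots,z_{s},w)\in\mathbf{H}^{s}\times\mathbf{C}$ explicitly, namely $z_{j}\mapsto\sigma_{j}(u)z_{j}+\sigma_{j}(c)$ at each real place and $w\mapsto\sigma_{s+1}(u)w+\sigma_{s+1}(c)$ at the complex place. Total positivity of $u$ is exactly what ensures each $\mathbf{H}$-factor is preserved. Next, I would compute the Möbius action of $M=\begin{pmatrix} a & b\\ & a^{-1}\end{pmatrix}$ under the embedding in Equation \ref{ly6}. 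At the $j$-th real place the matrix $\begin{pmatrix} \sigma_{j}(a) & \sigma_{j}(b)\\ & \sigma_{j}(a)^{-1}\end{pmatrix}$ acts by
\[
z_{j}\longmapsto\frac{\sigma_{j}(a)\,z_{j}+\sigma_{j}(b)}{\sigma_{j}(a)^{-1}}=\sigma_{j}(a^{2})\,z_{j}+\sigma_{j}(ab),
\]
and the same formula with $\sigma_{s+1}$ holds at the complex place. Comparing with the first step, the Möbius action of $M$ agrees verbatim with the Oeljeklaus-Toma action of the pair $(ab,a^{2})$, and this pair indeed lives in $\mathcal{O}_{K}\rtimes(\mathcal{O}_{K}^{\times,+})^{2}$ since $b\in\mathcal{O}_{K}$ and $a^{2}$ is a totally positive square.

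Next I would promote this observation to an isomorphism by analyzing the map
\[
\Phi:\begin{pmatrix}a & b\\ & a^{-1}\end{pmatrix}\longmapsto(ab,\,a^{2}).
\]
For injectivity: a totally positive element has a unique totally positive square root (its image at each real place is forced to be the positive square root of $\sigma_{j}(a^{2})$), so $a$ is recovered from $a^{2}$, and then $b=a^{-1}(ab)$ is determined. For surjectivity: given $(c,u)\in\mathcal{O}_{K}\rtimes(\mathcal{O}_{K}^{\times,+})^{2}$, write $u=a^{2}$ with $a\in\mathcal{O}_{K}^{\times,+}$ and set $b:=a^{-1}c$, which is an element of $\mathcal{O}_{K}$ because $a$ is a unit of $\mathcal{O}_{K}$. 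Multiplying out
\[
\begin{pmatrix}a_{1} & b_{1}\\ & a_{1}^{-1}\end{pmatrix}\begin{pmatrix}a_{2} & b_{2}\\ & a_{2}^{-1}\end{pmatrix}=\begin{pmatrix}a_{1}a_{2} & a_{1}b_{2}+b_{1}a_{2}^{-1}\\ & (a_{1}a_{2})^{-1}\end{pmatrix}
\]
and applying $\Phi$ gives the pair $\bigl(a_{1}^{2}(a_{2}b_{2})+a_{1}b_{1},(a_{1}a_{2})^{2}\bigr)=(c_{1}+u_{1}c_{2},u_{1}u_{2})$, which is exactly the semi-direct product law on the target. Hence $\Phi$ is a group isomorphism.

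I do not expect a serious obstacle; the proof is really a bookkeeping exercise. The only point that requires some attention is the appearance of squares: the upper-triangular matrix subgroup in Equation \ref{ly4} covers only $(\mathcal{O}_{K}^{\times,+})^{2}$ on the ``dilation'' part, because the diagonal $\begin{pmatrix}a & \\ & a^{-1}\end{pmatrix}$ acts as $z\mapsto a^{2}z$ rather than $z\mapsto az$. This is why the lemma produces $\mathcal{O}_{K}\rtimes(\mathcal{O}_{K}^{\times,+})^{2}$ and not the full $\mathcal{O}_{K}\rtimes\mathcal{O}_{K}^{\times,+}$; keeping track of this discrepancy is the one place where a sign issue (and hence the need to invoke uniqueness of totally positive square roots) genuinely enters.
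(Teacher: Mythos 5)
Your proposal is correct and follows essentially the same route as the paper: both exhibit the explicit correspondence $\begin{pmatrix}a & b\\ & a^{-1}\end{pmatrix}\leftrightarrow(ab,a^{2})$, use the uniqueness of totally positive square roots to see it is bijective, and check that the upper-triangular M\"obius action $z\mapsto a^{2}z+ab$ matches the Oeljeklaus-Toma action place by place. The only cosmetic difference is that you compute the M\"obius action directly from the fractional-linear formula rather than via the Iwasawa orbit parametrization, and you write out the homomorphism verification that the paper leaves to the reader.
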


\begin{proof}
This is a straight-forward computation. The isomorphism is induced from the
map%
\begin{align*}
\varphi:\mathcal{O}_{K}\rtimes(\mathcal{O}_{K}^{\times,+})^{2}  &
\longrightarrow\operatorname*{SL}\nolimits_{2}\left(  \mathcal{O}_{K}\right)
\\
(u,v)  &  \longmapsto%
\begin{pmatrix}
\sqrt{v} & \frac{u}{\sqrt{v}}\\
& \frac{1}{\sqrt{v}}%
\end{pmatrix}
\text{.}%
\end{align*}
Note that each unit in $(\mathcal{O}_{K}^{\times,+})^{2}$ has a unique totally
positive square root in $\mathcal{O}_{K}^{\times,+}$ so that the square roots
have a well-defined sense. The multiplication on the left-hand side is
$(u,v)(\tilde{u},\tilde{v})=(u+v\tilde{u},v\tilde{v})$ and we leave it to the
reader to check that $\varphi$ is a group homomorphism. Its image is the group
of Equation \ref{ly4} and an inverse is given by%
\[%
\begin{pmatrix}
a & b\\
& a^{-1}%
\end{pmatrix}
\longmapsto(ab,a^{2})\qquad\in\mathcal{O}_{K}\rtimes(\mathcal{O}_{K}%
^{\times,+})^{2}\text{.}%
\]
Thus, the groups are indeed isomorphic. Let $z:=x+iy\in\mathbf{H}$ be an
arbitrary point, i.e.
\[
x+iy=%
\begin{pmatrix}
\sqrt{y} & \frac{x}{\sqrt{y}}\\
& \frac{1}{\sqrt{y}}%
\end{pmatrix}
\cdot i\text{,}%
\]
following the orbit parametrization coming from Equation \ref{l2}. We compute%
\[%
\begin{pmatrix}
1 & b\\
& 1
\end{pmatrix}
\cdot z=%
\begin{pmatrix}
1 & b\\
& 1
\end{pmatrix}
\cdot%
\begin{pmatrix}
\sqrt{y} & \frac{x}{\sqrt{y}}\\
& \frac{1}{\sqrt{y}}%
\end{pmatrix}
\cdot i=(x+b)+iy
\]
and%
\[%
\begin{pmatrix}
a & \\
& a^{-1}%
\end{pmatrix}
\cdot z=%
\begin{pmatrix}
a & \\
& a^{-1}%
\end{pmatrix}
\cdot%
\begin{pmatrix}
\sqrt{y} & \frac{x}{\sqrt{y}}\\
& \frac{1}{\sqrt{y}}%
\end{pmatrix}
\cdot i=a^{2}(x+iy)
\]
and we immediately see that this agrees under the embedding in Equation
\ref{ly6} with the action of the subgroup $\mathcal{O}_{K}\rtimes
(\mathcal{O}_{K}^{\times,+})^{2}$ as defined in \cite{MR2141693}. Thus, not
only are the groups isomorphic; the Oeljeklaus-Toma action also matches the
M\"{o}bius action.
\end{proof}

\section{\label{section_CommutatorSubgroup}The commutator subgroup $[\pi,\pi
]$}

Next, we want to understand the structure of the commutator subgroup of
$\mathcal{O}_{K}\rtimes U$. Since it will not be any more difficult to treat
the general case, let us for the moment allow for $U$ any subgroup of
$\mathcal{O}_{K}^{\times}$. The following definition is suggested by the
observations in the paper of Parton and Vuletescu \cite[Proof of Thm.
4.2]{MR2875828}:

\begin{definition}
\label{def_IdealJ}For a number field $K$ and an arbitrary subgroup
$U\subseteq\mathcal{O}_{K}^{\times}$ define%
\[
J(U):=\{\text{\emph{ideal generated by }}1-v\text{\emph{\ for all} }v\in
U\}\qquad\subseteq\mathcal{O}_{K}\text{.}%
\]

\end{definition}

Of course this ideal might just be the entire ring of integers. If one
actually wants to compute this ideal, it will be convenient to reduce its
definition to a finite number of generators:

\begin{lemma}
\label{Lemma_JUCanBeDefinedOnGenerators}If $\varepsilon_{1},\ldots
,\varepsilon_{s}$ are generators of the group $U$, then the ideal $J(U)$ can
also be described as%
\[
J(U)=(1-\varepsilon_{1},\ldots,1-\varepsilon_{s})\text{.}%
\]

\end{lemma}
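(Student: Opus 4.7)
The plan is to prove the two inclusions separately. The inclusion $(1-\varepsilon_{1},\ldots,1-\varepsilon_{s}) \subseteq J(U)$ is immediate: each $\varepsilon_{i}$ lies in $U$, so each $1-\varepsilon_{i}$ is among the defining generators of $J(U)$.

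For the reverse inclusion, it suffices to show that for every $v \in U$, the element $1-v$ lies in the ideal $I := (1-\varepsilon_{1},\ldots,1-\varepsilon_{s})$. The argument rests on two elementary identities in $\mathcal{O}_{K}$:
\begin{equation*}
1-uv = (1-u) + u(1-v), \qquad 1-u^{-1} = -u^{-1}(1-u).
\end{equation*}
The first shows that if $1-u, 1-v \in I$, then $1-uv \in I$ as well. The second, valid because every $u \in U$ is a genuine unit in $\mathcal{O}_{K}$, shows that if $1-u \in I$ then $1-u^{-1} \in I$.

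Now any $v \in U$ can be written as a finite word $\varepsilon_{i_{1}}^{e_{1}}\cdots\varepsilon_{i_{m}}^{e_{m}}$ with $e_{j} \in \{\pm 1\}$. I would induct on the length $m$: the base cases $v=1$ (giving $1-v=0 \in I$) and $v=\varepsilon_{i}^{\pm 1}$ are immediate from the generators of $I$ and the second identity; the inductive step follows by splitting off the last letter and invoking the first identity. This places $1-v$ in $I$ for every $v\in U$, and hence $J(U) \subseteq I$.

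There is no real obstacle here; the lemma is a purely formal consequence of the two displayed identities together with the fact that $U$ is a group (so generated as a group, not merely as a monoid) sitting inside the units of $\mathcal{O}_{K}$. The only point worth emphasising is that the invertibility of the $\varepsilon_{i}$ inside $\mathcal{O}_{K}$ is essential for the second identity to make sense integrally, which is why the statement is phrased for subgroups of $\mathcal{O}_{K}^{\times}$.
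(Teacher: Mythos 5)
Your proof is correct and follows essentially the same route as the paper: the same two identities $1-uv=(1-u)+u(1-v)$ and $1-u^{-1}=-u^{-1}(1-u)$, applied inductively to words in the generators $\varepsilon_{i}^{\pm1}$. The only cosmetic difference is that you dispose of inverses at the base of the induction, whereas the paper first reduces to the generating set $\{1-\varepsilon_{i},1-\varepsilon_{i}^{-1}\}$ and then removes the inverse generators afterwards.
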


\begin{proof}
It is clear that the $\{1-\varepsilon_{i}\}_{i=1,\ldots,s}$ generate a
sub-ideal of $J(U)$, so it suffices to prove the converse inclusion. Observe
that if $v,\tilde{v}\in U$ then%
\begin{equation}
v(1-\tilde{v})+(1-v)=1-v\tilde{v}\text{,} \label{lcca1}%
\end{equation}
where the left-hand side lies in the ideal generated by $1-v$ and $1-\tilde
{v}$. Therefore, for all products $v=\varepsilon_{1}^{n_{1}}\cdots
\varepsilon_{s}^{n_{s}}$ with $n_{1},\ldots,n_{s}\in\mathbf{Z}$ we may
inductively rewrite $1-v$ along Equation \ref{lcca1} as an element in the
ideal generated by the $\{1-\varepsilon_{i},1-\varepsilon_{i}^{-1}%
\}_{i=1,\ldots,s}$. Here $1-\varepsilon_{i}^{-1}$ occurs since Equation
\ref{lcca1} just allows to reduce products, but not inverses. However, we also
have
\[
1-v^{-1}=-v^{-1}(1-v)
\]
for all units $v\in U$, showing that the generators $1-\varepsilon_{i}^{-1}$
for $i=1,\ldots,s$ are actually not needed.
\end{proof}

\begin{lemma}
For a number field $K$ and subgroups $U,V\subseteq\mathcal{O}_{K}^{\times}$ we
have%
\[
J(U)+J(V)=J(U\cdot V)\text{,}%
\]
where we have a sum of ideals on the left-hand side and $U\cdot V$ denotes the
smallest subgroup of $\mathcal{O}_{K}^{\times}$ containing both $U$ and $V$.
\end{lemma}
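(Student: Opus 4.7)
The plan is to prove the two set-theoretic inclusions separately. One is immediate; the other rests on the telescoping identity already used in the preceding lemma.

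For the inclusion $J(U)+J(V)\subseteq J(U\cdot V)$, I would simply observe that $U\subseteq U\cdot V$ and $V\subseteq U\cdot V$, so every defining generator $1-u$ of $J(U)$, and every defining generator $1-v$ of $J(V)$, already appears among the defining generators of $J(U\cdot V)$. Hence both $J(U)$ and $J(V)$, and therefore their sum, are contained in $J(U\cdot V)$.

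For the reverse inclusion, the key point is that $\mathcal{O}_K^{\times}$ is abelian, so the subgroup $U\cdot V$ is simply the set of products $\{uv\mid u\in U,\ v\in V\}$ (no alternating words are needed). Consequently every generator of $J(U\cdot V)$ has the shape $1-uv$, and I would invoke the telescoping identity $u(1-v)+(1-u)=1-uv$ from Equation \ref{lcca1} to rewrite it as the sum of $u(1-v)\in J(V)$ (using that $J(V)$ is an ideal in $\mathcal{O}_K$, not merely an additive subgroup) and $1-u\in J(U)$. This gives $J(U\cdot V)\subseteq J(U)+J(V)$ and finishes the argument.

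The hard part is essentially cosmetic: one just has to remember that commutativity of $\mathcal{O}_K^{\times}$ collapses any longer alternating word in elements of $U$ and $V$ to a single product $uv$, so that the identity from Equation \ref{lcca1} applied once is already enough. No induction on word length is needed, in contrast to the proof of Lemma \ref{Lemma_JUCanBeDefinedOnGenerators}.
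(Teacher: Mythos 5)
Your proof is correct and follows essentially the same route as the paper: both directions rest on the commutativity of $\mathcal{O}_K^{\times}$ (so that every element of $U\cdot V$ is a single product $uv$) combined with the telescoping identity of Equation \ref{lcca1}. The only cosmetic difference is that the paper handles the easy inclusion by writing a general element of $J(U)+J(V)$ as $\sum a_i(1-u_i)$ with each $u_i\in U\cup V\subseteq U\cdot V$, whereas you phrase it via containment of the generating sets; these are the same observation.
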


\begin{proof}
Every element of $U\cdot V$ has the shape $uv$ with $u\in U,v\in V$ since
$\mathcal{O}_{K}^{\times}$ is abelian. By Equation \ref{lcca1} the element
$1-uv$ lies in the ideal sum $J(U)+J(V)$. Conversely, any element of the sum
can be written as $\sum a_{i}(1-u_{i})$ with $a_{i}\in\mathcal{O}_{K}$ and
$u_{i}$ (for each $i$) either in $U$ or in $V$, so in either case $u_{i}\in
U\cdot V$.
\end{proof}

\begin{proposition}
\label{Prop_StructureOfCommutator}Let $K$ be a number field and $U\subseteq
\mathcal{O}_{K}^{\times}$ a subgroup. Then for the semi-direct product
$\pi:=\mathcal{O}_{K}\rtimes U$ we have%
\[
\lbrack\pi,\pi]=\{(u,1)\in\pi\mid u\in J(U)\}\text{.}%
\]
Analogously, in the subgroup of $\operatorname*{SL}\nolimits_{2}%
(\mathcal{O}_{K})$ defined in Equation \ref{ly4} the commutator subgroup
consists of all matrices%
\[%
\begin{pmatrix}
1 & b\\
& 1
\end{pmatrix}
\]
with $b\in J(U^{2})$.
\end{proposition}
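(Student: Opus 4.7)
My plan is to compute the commutator of two arbitrary elements in $\mathcal{O}_K \rtimes U$ directly from the group law and then show the containments in both directions.

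First I would carry out the calculation: elements of $\pi = \mathcal{O}_K \rtimes U$ are pairs $(a,v)$ with multiplication $(a,v)(a',v') = (a+va', vv')$, so the inverse is $(a,v)^{-1} = (-v^{-1}a, v^{-1})$. A short computation, using crucially that $U$ is abelian so the $U$-component of any commutator collapses to $1$, gives
\[
[(a,v),(a',v')] = \bigl((1-v')\,a - (1-v)\,a',\,1\bigr).
\]
This formula does all the work. For the inclusion $[\pi,\pi] \subseteq \{(u,1) \mid u\in J(U)\}$ I observe that the right-hand side is an additive subgroup (since $(u,1)(u',1)=(u+u',1)$), and by the formula every commutator lands in it; hence so does the subgroup they generate. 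For the reverse inclusion I specialize to $a'=0$, which gives $[(a,1),(0,v)] = ((1-v)a,\,1)$. Running over all $a\in\mathcal{O}_K$ and $v\in U$ produces every element of the form $(1-v)a$, and these additively span $J(U) = \sum_{v\in U}(1-v)\mathcal{O}_K$ (this is just the description of the ideal generated by the $1-v$ as an $\mathcal{O}_K$-submodule). Hence $J(U)\times\{1\}\subseteq[\pi,\pi]$.

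For the second assertion, I would invoke Lemma \ref{lemma_sl2_vs_OT_action}: the subgroup of $\operatorname{SL}_2(\mathcal{O}_K)$ in Equation \ref{ly4} is isomorphic to $\mathcal{O}_K \rtimes (\mathcal{O}_K^{\times,+})^2$ via $\varphi$, and under this identification the normal subgroup of matrices $\left(\begin{smallmatrix}1 & b\\ & 1\end{smallmatrix}\right)$ corresponds exactly to the $\mathcal{O}_K$-factor. So applying the first part with $U$ replaced by $U^2 = (\mathcal{O}_K^{\times,+})^2$ yields that the commutator subgroup consists of the matrices with $b \in J(U^2)$, as claimed.

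The whole argument is essentially one commutator computation, so I expect no real obstacle. The only point that deserves a sentence of justification is the identification $J(U) = \sum_{v\in U}(1-v)\mathcal{O}_K$ as an additive subgroup of $\mathcal{O}_K$ — i.e., that to hit the ideal one does not need finite $\mathcal{O}_K$-linear combinations of products of several $(1-v)$'s. That is immediate from the definition of the ideal generated by a set: it is the $\mathcal{O}_K$-submodule spanned by that set, and no further closure under multiplication is required.
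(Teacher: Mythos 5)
Your proposal is correct and follows essentially the same route as the paper: both rest on the single commutator computation $[(a,v),(a',v')]=((1-v')a-(1-v)a',1)$, deduce the two inclusions from it (the paper gets the reverse inclusion via the ideals $(1-\tilde v,1-v)$ and additive closure, while you specialize to $[(a,1),(0,v)]=((1-v)a,1)$, an equivalent and if anything slightly cleaner step, correctly justified by noting that the ideal generated by the $1-v$ is just their $\mathcal{O}_K$-linear span), and both obtain the $\operatorname{SL}_2$ statement by transport along the isomorphism of Lemma \ref{lemma_sl2_vs_OT_action}. No gaps.
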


\begin{proof}
We have $(u,v)(\tilde{u},\tilde{v})=(u+v\tilde{u},v\tilde{v})$ and
$(u,v)^{-1}=(-uv^{-1},v^{-1})$. Since the commutator subgroup $[\pi,\pi]$ is
generated by commutators, it is actually generated by all elements of the
shape%
\begin{align}
(u,v)(\tilde{u},\tilde{v})(u,v)^{-1}(\tilde{u},\tilde{v})^{-1}  &
=(u+v\tilde{u},v\tilde{v})(-uv^{-1}-\tilde{u}v^{-1}\tilde{v}^{-1},v^{-1}%
\tilde{v}^{-1})\nonumber\\
&  =((1-\tilde{v})u-(1-v)\tilde{u},1)\text{.} \label{lcca2}%
\end{align}
This shows that the commutator subgroup $[\pi,\pi]$ is contained in the
abelian group $(\mathcal{O}_{K},1)$ and then necessarily a subgroup.
Furthermore, since $u,\tilde{u}\in\mathcal{O}_{K}$ are arbitrary, $[\pi,\pi]$
contains the subgroup $(I,1)$ for $I=(1-\tilde{v},1-v)$, i.e. the ideal in
$\mathcal{O}_{K}$ generated by the elements $1-\tilde{v},1-v$. Since the
latter is true for all $v\in U$ and $[\pi,\pi]$ is closed under addition in
$(\mathcal{O}_{K},1)$, it follows that $[\pi,\pi]$ contains all elements
$(x,1)$ with $x\in J(U)$. Conversely, from Equation \ref{lcca2} it is clear
that all elements in $[\pi,\pi]$ are of the shape $(x,1)$ with $x\in J(U)$,
proving the claim. The claim about $\operatorname*{SL}\nolimits_{2}$ follows
directly from Lemma \ref{lemma_sl2_vs_OT_action}.
\end{proof}

\begin{proposition}
\label{Prop_KappaAgreesWithOK}Let $K$ be a number field and $U\subseteq
\mathcal{O}_{K}^{\times}$ a torsion-free subgroup $\neq1$. Then for
$\pi:=\mathcal{O}_{K}\rtimes U$ the kernel $\varkappa$ in the short exact
sequence%
\begin{equation}
1\longrightarrow\varkappa\longrightarrow\pi\longrightarrow\pi
_{\operatorname*{ab},\operatorname*{fr}}\longrightarrow1 \label{tsy1}%
\end{equation}
is just the subgroup $\mathcal{O}_{K}$. Moreover, if $U$ is admissible in the
sense of \cite{MR2141693}, we have a canonical short exact sequence%
\begin{equation}
0\longrightarrow\mathcal{O}_{K}/J(U)\longrightarrow H_{1}(X(K;U),\mathbf{Z}%
)\longrightarrow U\longrightarrow0\text{,} \label{tsy2}%
\end{equation}
where $\mathcal{O}_{K}/J(U)$ is precisely the torsion subgroup. In particular,
this group needs at most $s+2t$ generators.
\end{proposition}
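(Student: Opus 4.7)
The plan is to extract everything from Proposition \ref{Prop_StructureOfCommutator} combined with the semi-direct product short exact sequence $1\to\mathcal{O}_{K}\to\pi\to U\to1$ and then invoke asphericity of $X(K;U)$ to pass from group theory to topology.

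First I would pass to the abelianization of the semi-direct product sequence. By Proposition \ref{Prop_StructureOfCommutator} the commutator subgroup $[\pi,\pi]$ is exactly $(J(U),1)\subseteq (\mathcal{O}_{K},1)$, so the image of $\mathcal{O}_{K}$ in $\pi_{\operatorname*{ab}}$ is $\mathcal{O}_{K}/J(U)$. Since $\mathcal{O}_{K}$ is normal in $\pi$ and the quotient is the abelian group $U$, taking $[\pi,\pi]$-cosets yields the short exact sequence
\[
0\longrightarrow\mathcal{O}_{K}/J(U)\longrightarrow\pi_{\operatorname*{ab}}\longrightarrow U\longrightarrow 0\text{.}
\]

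Next I would verify that $\mathcal{O}_{K}/J(U)$ is finite and is precisely the torsion part of $\pi_{\operatorname*{ab}}$. Because $U$ is torsion-free and $U\neq 1$, we may pick some $u\in U$ with $u\neq 1$; then $1-u$ is a nonzero element of $\mathcal{O}_{K}$, and the principal ideal it generates already has finite index in $\mathcal{O}_{K}$. Hence $\mathcal{O}_{K}/J(U)$ is a finite (thus torsion) quotient. Since $U$ itself is torsion-free, the displayed sequence shows $\pi_{\operatorname*{ab},\operatorname*{tor}}=\mathcal{O}_{K}/J(U)$ and $\pi_{\operatorname*{ab},\operatorname*{fr}}=U$. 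Consequently, the kernel $\varkappa$ in (\ref{tsy1}) is the preimage of $0$ under $\pi\twoheadrightarrow U$, which by the defining semi-direct product sequence is $\mathcal{O}_{K}$, proving the first assertion.

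For the second assertion I would invoke that admissibility of $U$ in the sense of \cite{MR2141693} guarantees that $\mathcal{O}_{K}\rtimes U$ acts freely and properly discontinuously on the contractible space $\mathbf{H}^{s}\times\mathbf{C}^{t}$, so $X(K;U)$ is an Eilenberg-MacLane space $K(\pi,1)$. Hurewicz then gives $H_{1}(X(K;U),\mathbf{Z})\cong\pi_{\operatorname*{ab}}$, transporting the sequence above into (\ref{tsy2}) canonically. The generator count is immediate: as an abelian group $\mathcal{O}_{K}\cong\mathbf{Z}^{s+2t}$, so any quotient needs at most $s+2t$ generators.

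There is no real obstacle here; the argument is essentially bookkeeping once Proposition \ref{Prop_StructureOfCommutator} is in hand. The only points requiring care are the two places the hypotheses on $U$ are actually consumed: torsion-freeness of $U$ is precisely what ensures the torsion of $\pi_{\operatorname*{ab}}$ does not pick up extra contribution from the quotient $U$ (and hence that $\pi_{\operatorname*{ab},\operatorname*{fr}}=U$ and $\varkappa=\mathcal{O}_{K}$), while admissibility is what makes $X(K;U)$ aspherical so that $H_{1}$ equals the group-theoretic abelianization.
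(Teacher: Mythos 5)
Your proposal is correct and takes essentially the same approach as the paper: both arguments rest on Proposition \ref{Prop_StructureOfCommutator} (i.e.\ $[\pi,\pi]=(J(U),1)$), the finiteness of $\mathcal{O}_{K}/J(U)$ forced by $U\neq1$, torsion-freeness of $U$ to identify $\mathcal{O}_{K}/J(U)$ as exactly the torsion subgroup of $\pi_{\operatorname*{ab}}$ (hence $\pi_{\operatorname*{ab},\operatorname*{fr}}=U$ and $\varkappa=\mathcal{O}_{K}$), and asphericity of $X(K;U)$ plus Hurewicz for the passage to $H_{1}$. Your only deviation is cosmetic but pleasant: you produce the sequence $0\to\mathcal{O}_{K}/J(U)\to\pi_{\operatorname*{ab}}\to U\to0$ directly by quotienting the semi-direct product sequence by $[\pi,\pi]$ (third isomorphism theorem), which sidesteps the diagram chase the paper runs on its two-row diagram together with its footnoted caveat about the Snake Lemma for non-abelian groups.
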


\begin{proof}
(after Parton and Vuletescu \cite[Thm. 4.2]{MR2875828}) We have the
commutative diagram with exact rows%
\begin{equation}%
%TCIMACRO{\TeXButton{TeX field}{\xymatrix{
%1 \ar[r] & [\pi,\pi] \ar[r] \ar[d] & \pi\ar[r] \ar[d]_{\mathrm{id}}
%& \pi_{\mathrm{ab}} \ar[d] \ar[r] & 1 \\
%1 \ar[r] & \mathcal{O}_{K} \ar[r] & \pi\ar[r] & U \ar[r] & 1,
%}} }%
%BeginExpansion
\xymatrix{
1 \ar[r] & [\pi,\pi] \ar[r] \ar[d] & \pi\ar[r] \ar[d]_{\mathrm{id}}
& \pi_{\mathrm{ab}} \ar[d] \ar[r] & 1 \\
1 \ar[r] & \mathcal{O}_{K} \ar[r] & \pi\ar[r] & U \ar[r] & 1,
}
%EndExpansion
\label{lsn1}%
\end{equation}
where the upper row is formed from abelianization and the lower row from the
semi-direct product structure of $\pi$. The existence of the downward arrows
follows from Prop. \ref{Prop_StructureOfCommutator}, along with $[\pi
,\pi]=J(U)$, where $J(U)$ is an ideal in $\mathcal{O}_{K}$. Since we assume
$U\neq1$, $J(U)$ is not the zero ideal and therefore must have finite index in
$\mathcal{O}_{K}$ as an abelian group; this means that%
\[
m\mathcal{O}_{K}\subseteq\lbrack\pi,\pi]\subseteq\mathcal{O}_{K}%
\]
for some $m$. It is now an easy diagram chase\footnote{The Snake Lemma is
false for arbitrary non-abelian groups, but it \textit{does} hold for the
specific Diagram \ref{lsn1}. The essential reason is that all kernels and
cokernels in this diagram exist. This would not necessarily hold for a general
diagram of non-abelian groups.} to see that $\ker\left(  \pi
_{\operatorname*{ab}}\rightarrow U\right)  $ is a pure torsion group, in fact
it is annihilated by $m$. On the other hand since $U$ is torsion-free, the
kernel of $\pi_{\operatorname*{ab}}\rightarrow U$ must contain the full
torsion subgroup. We conclude that $\ker\left(  \pi_{\operatorname*{ab}%
}\rightarrow U\right)  $ actually agrees with the torsion subgroup in
$\pi_{\operatorname*{ab}}$, and via the snake map with $\mathcal{O}_{K}/J(U)$.
Since the right-hand side downward arrow in Diagram \ref{lsn1} is moreover
surjective, we deduce that $U=\pi_{\operatorname*{ab},\operatorname*{fr}}$ and
therefore $\varkappa$ in Equation \ref{tsy1} agrees with $\mathcal{O}_{K}$.
Finally, if $U$ is admissible, we can form $X(K;U)$ and by the Hurewicz
Theorem there is a canonical isomorphism%
\[
\pi_{1}(X(K;U),\ast)_{\operatorname*{ab}}\overset{\sim}{\longrightarrow}%
H_{1}(X(K;U),\mathbf{Z})
\]
and our previous argument decomposes the left-hand side just in the shape of
Equation \ref{tsy2}. Since $\mathcal{O}_{K}\simeq\mathbf{Z}^{s+2t}$, every
quotient group requires at most $s+2t$ generators itself.
\end{proof}

We should give some examples regarding the structure of $J(U)$. Firstly, it is
easy to produce examples where the ideal is non-trivial:

\begin{example}
\label{example_ComputeJ2}We give a few examples for the group of totally
positive units, i.e. we consider the ideal norms $N(J(\mathcal{O}_{K}%
^{\times,+}))=\#\mathcal{O}_{K}/J(\mathcal{O}_{K}^{\times,+})$. We perform
this computation for the number fields%
\[
F_{m}=\mathbf{Q}[T]/(T^{3}-T+m)\quad G_{m}=\mathbf{Q}[T]/(T^{7}-T-m)\quad
H_{m}=\mathbf{Q}[T]/(T^{3}-2T-m)\text{,}%
\]
the result spelled out in the respective column:%
\[%
\begin{tabular}
[c]{c|c|cc}%
$m$ & $F_{m}$ & $G_{m}$ & \multicolumn{1}{|c}{$H_{m}$}\\\hline
\multicolumn{1}{r|}{$1$} & \multicolumn{1}{|r|}{$1$} &
\multicolumn{1}{|r|}{$1$} & \multicolumn{1}{r}{$-$}\\
\multicolumn{1}{r|}{$2$} & \multicolumn{1}{|r|}{$2^{2}$} &
\multicolumn{1}{|r|}{$2^{2}$} & \multicolumn{1}{r}{$2$}\\
\multicolumn{1}{r|}{$3$} & \multicolumn{1}{|r|}{$3^{2}$} &
\multicolumn{1}{|r|}{$1$} & \multicolumn{1}{r}{$2\cdot3^{2}$}\\
\multicolumn{1}{r|}{$4$} & \multicolumn{1}{|r|}{$2^{3}$} &
\multicolumn{1}{|r|}{$2^{2}$} & \multicolumn{1}{r}{$-$}\\
\multicolumn{1}{r|}{$5$} & \multicolumn{1}{|r|}{$19$} &
\multicolumn{1}{|r|}{$1$} & \multicolumn{1}{r}{$2^{4}$}\\
\multicolumn{1}{r|}{$6$} & \multicolumn{1}{|r|}{$-$} &
\multicolumn{1}{|r|}{$2^{2}$} & \multicolumn{1}{r}{$2\cdot3\cdot5\cdot11$}\\
\multicolumn{1}{r|}{$7$} & \multicolumn{1}{|r|}{$17$} &
\multicolumn{1}{|r|}{$1$} & \multicolumn{1}{r}{$2\cdot7\cdot109$}%
\end{tabular}
\]
This table was generated by computer, see Code \ref{comp_code_ComputeIdealJ}
on page \pageref{comp_code_ComputeIdealJ} for details. The dashes
\textquotedblleft$-$\textquotedblright\ indicate whenever the given polynomial
is not irreducible. Solely for the entertainment of the reader, let us also
list a large example: For the randomly chosen number field $\mathbf{Q}%
[T]/(T^{3}+2T+2000)$ one gets%
\[
\#\mathcal{O}_{K}/J(\mathcal{O}_{K}^{\times,+})=2^{2}\cdot5^{2}\cdot
7\cdot967\cdot1649120827309715616889\text{.}%
\]
As far as I can tell, there does not seem to be an obvious pattern governing
the structure of the ideal $J(U)$.
\end{example}

\begin{remark}
\label{remark_MakeJBeOne}If the reader wants to produce infinite families of
number fields so that $J(\mathcal{O}_{K}^{\times,+})=(1)$ for all of them, the
easiest way is to pick some number field $L$ with $J(\mathcal{O}_{L}%
^{\times,+})=(1)$. Then take any family $L_{i}$ of number fields and consider
the composita $K_{i}:=L_{i}\cdot L$. Then we have%
\[
\mathcal{O}_{K_{i}}=J_{L_{i}}(\mathcal{O}_{L_{i}}^{\times,+})\mathcal{O}%
_{K_{i}}\subseteq J_{K_{i}}(\mathcal{O}_{L_{i}}^{\times,+})\subseteq J_{K_{i}%
}(\mathcal{O}_{K_{i}}^{\times,+})\text{,}%
\]
where $J_{F}$ refers to forming the ideal $J(U)$ with respect to the number
field $F$.
\end{remark}

Of course Proposition \ref{Prop_KappaAgreesWithOK} provokes the question: What
abelian groups can occur for $\mathcal{O}_{K}/J(U)$ at all? For example, for
$s=t=1$ we see that only finite abelian groups with at most three generators
are possible; this is of course already in Inoue's original paper \cite[\S 2,
p. 274]{MR0342734}. Do all of them really occur? I supply a crude `first
approach' for cyclic groups in Prop.
\ref{prop_ConstructInoueSurfaceWithTorsionZm} below, but it is not quite
satisfactory.\medskip

There is also a completely different way to characterize $\mathcal{O}_{K}$
inside $\mathcal{O}_{K}\rtimes U$ and it could serve as an alternative
definition of $\varkappa$ in the formulation of Proposition
\ref{prop_reconstruct}:

\begin{proposition}
\label{prop_IdentifyOKAsLargestZnSubgroup}Let $K$ be a number field and
$U\subseteq\mathcal{O}_{K}^{\times}$ a subgroup. Consider the family of all
subgroups%
\[
\mathcal{H}:=\left\{  H\subseteq\mathcal{O}_{K}\rtimes U\mid H\simeq
\mathbf{Z}^{n}\text{ for some }n\geq0\right\}  \text{.}%
\]
Then there is a maximal $n$ which can occur, and all those realizing the
maximal $n$ are partially ordered by inclusion and there is a unique maximal
$H$ among them. In fact, this maximal $H$ is the subgroup $\mathcal{O}_{K}$.
\end{proposition}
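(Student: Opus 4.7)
My plan is to analyze any free abelian subgroup $H\simeq\mathbf{Z}^{n}$ of $\pi:=\mathcal{O}_{K}\rtimes U$ via a direct commutator computation inside the semi-direct product. Writing elements as pairs $(u,v)$ with $u\in\mathcal{O}_{K}$ and $v\in U$, the calculation behind Equation \ref{lcca2} shows that $(u_{1},v_{1})$ and $(u_{2},v_{2})$ commute if and only if $(1-v_{2})u_{1}=(1-v_{1})u_{2}$ in $\mathcal{O}_{K}$. Specializing to $v_{2}=1$, commutation of $(u_{2},1)$ with $(u_{1},v_{1})$ becomes $(1-v_{1})u_{2}=0$, and since $\mathcal{O}_{K}$ is an integral domain this forces either $v_{1}=1$ or $u_{2}=0$. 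This integral-domain dichotomy is the one nontrivial input.

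Next I would apply this to any abelian $H\subseteq\pi$, splitting into two cases. If $H\subseteq\mathcal{O}_{K}\times\{1\}$, then $H$ is a free abelian subgroup of $\mathcal{O}_{K}\simeq\mathbf{Z}^{s+2t}$, so $n\leq s+2t$. Otherwise $H$ contains some $(u_{0},v_{0})$ with $v_{0}\neq 1$, and then every $(u,1)\in H$ satisfies $(1-v_{0})u=0$, hence $u=0$. Thus $H\cap(\mathcal{O}_{K}\times\{1\})=\{1\}$, so the projection $\pi\to U$ restricts to an injection $H\hookrightarrow U$; as $H$ is torsion-free it lands in the free part of $U$, which by Dirichlet's unit theorem has rank at most $s+t-1$.

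To conclude, $s+t-1<s+2t$ (the difference being $t+1\geq 1$), and $\mathcal{O}_{K}=\mathcal{O}_{K}\times\{1\}$ itself belongs to $\mathcal{H}$ with rank $s+2t$; so the maximum rank is exactly $s+2t$, and every rank-maximal $H$ must sit inside $\mathcal{O}_{K}$ as a full-rank (i.e. finite-index) subgroup. Since $\mathcal{O}_{K}$ itself is such a subgroup and contains every other finite-index subgroup of $\mathcal{O}_{K}$, it is the unique maximum under inclusion, giving the proposition. I expect no real obstacle beyond the commutator/integral-domain step; the only delicate bookkeeping is to check that the edge cases $U=\{1\}$ or $U$ containing $\{\pm 1\}$-torsion do not cause trouble, but both are immediate (in the first case the statement is trivial, in the second $1-(-1)=2$ is still a nonzero non-zero-divisor in $\mathcal{O}_{K}$).
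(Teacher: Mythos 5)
Your proposal is correct and follows essentially the same route as the paper's proof: both rest on the commutator formula \eqref{lcca2} in $\mathcal{O}_{K}\rtimes U$, split according to whether $H$ contains an element with unit component $\neq 1$, bound the rank in that case by an injection into $U$ (giving $s+t-1<s+2t$ via Dirichlet's Unit Theorem), and conclude that every rank-maximal $H$ lies inside $\mathcal{O}_{K}$, which itself belongs to $\mathcal{H}$ and is therefore the unique maximum. The only cosmetic difference is that you derive injectivity of $H\rightarrow U$ from triviality of the kernel $H\cap(\mathcal{O}_{K}\times\{1\})$, whereas the paper observes that $\tilde{u}=\frac{1-\tilde{v}}{1-v}u$ is determined by $\tilde{v}$ inside the centralizer; both versions hinge on the same integral-domain step.
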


\begin{proof}
Suppose some non-trivial $H\in\mathcal{H}$ exists and let $(u,v)\in H$ be some
element which is not the identity. Since $H$ is abelian, all $(\tilde
{u},\tilde{v})\in H$ must commute with $(u,v)$. By Equation \ref{lcca2} this
forces%
\[
\mathbf{1}_{H}=(u,v)(\tilde{u},\tilde{v})(u,v)^{-1}(\tilde{u},\tilde{v}%
)^{-1}=((1-\tilde{v})u-(1-v)\tilde{u},1)\text{,}%
\]
so%
\begin{equation}
(1-\tilde{v})u-(1-v)\tilde{u}=0\text{.} \label{lT2}%
\end{equation}
Now we need a case distinction:\newline(1) Suppose $v\neq1$. Then in the field
$K$ we can solve for $\tilde{u}$ and find%
\begin{equation}
\tilde{u}=\frac{1-\tilde{v}}{1-v}u\text{.} \label{lT1}%
\end{equation}
For any given $\tilde{v}\in U$ it can be true or false that the right-hand
side lies in $\mathcal{O}_{K}$ (recall that $u,v$ are fixed). We obtain that
the largest subset of $\mathcal{O}_{K}\rtimes U$ of elements commuting with
$(u,v)$ is%
\begin{equation}
C_{u,v}:=\left\{  (\tilde{u},\tilde{v})\text{ so that }\tilde{u}%
=\frac{1-\tilde{v}}{1-v}u\in\mathcal{O}_{K}\right\}  \label{lcca3}%
\end{equation}
and by definition as a centralizer this is actually a subgroup. The latter can
also be checked directly. As $H$ is abelian and contains $(u,v)$, we must have
$H\subseteq C_{u,v}$. We compose the inclusion of $H$ with the projection of
the semi-direct product, i.e.%
\begin{align*}
H\hookrightarrow C_{u,v}\hookrightarrow\mathcal{O}_{K}\rtimes U  &
\longrightarrow U\hookrightarrow\mathcal{O}_{K}^{\times}\simeq\mu_{K}%
\times\mathbf{Z}^{s+t-1}\\
(\tilde{u},\tilde{v})  &  \longmapsto\tilde{v}\text{.}%
\end{align*}
Since Equation \ref{lT1} implies that $\tilde{u}$ can be computed from
$\tilde{v}$ (and $u,v$ were fixed), this composition is actually injective. It
follows that the $\mathbf{Z}$-rank of $H$ can be at most $s+t-1$.\newline(2)
Suppose $v=1$. Then Equation \ref{lT2} becomes $(1-\tilde{v})u=0$. We have
$u\neq0$ since $(u,v)=(0,1)$ would then be the identity element, which we had
excluded. Hence, the elements in $\mathcal{O}_{K}\rtimes U$ commuting with
$(u,v)$ are precisely those with $\tilde{v}=1$; and these are precisely those
forming the subgroup $\mathcal{O}_{K}\simeq\mathbf{Z}^{s+2t}$. We always have
$s+2t>s+t-1$, so we conclude that the subgroups of Equation \ref{lcca3} are
never realizing the maximal $\mathbf{Z}$-rank. Instead, it follows that only
those $H\subseteq\mathcal{O}_{K}$ with $H\simeq\mathbf{Z}^{s+2t}$ realize the
maximal $\mathbf{Z}$-rank and among all such $H$ contained in $\mathcal{O}%
_{K}$ clearly the full group $\mathcal{O}_{K}\simeq\mathbf{Z}^{s+2t}$ is the
unique maximal.
\end{proof}

\section{Proof of Proposition \ref{prop_reconstruct}}

This is inspired from the perspective taken in the proof of \cite[Thm.
4.2]{MR2875828} by Parton and Vuletescu.

\begin{proof}
[Proof of Prop. \ref{prop_reconstruct}]Since $X$ is an Oeljeklaus-Toma
manifold, it is connected and therefore $\pi:=\pi_{1}(X,x)$ is well-defined up
to the inner automorphism coming from the choice of picking a base point. We
denote by $\pi_{\operatorname*{ab}}$ its maximal abelian quotient, and by
$\pi_{\operatorname*{ab},\operatorname*{fr}}$ the maximal torsion-free
quotient of the latter. We may then define $\varkappa$ just as the
corresponding kernel in the short exact sequence%
\begin{equation}
1\longrightarrow\varkappa\longrightarrow\pi\longrightarrow\pi
_{\operatorname*{ab},\operatorname*{fr}}\longrightarrow1\text{.}\label{ltt7}%
\end{equation}
Since $\mathbf{H}^{s}\times\mathbf{C}$ is contractible and the action of
$\mathcal{O}_{K}\rtimes\mathcal{O}_{K}^{\times,+}$ is easily checked to be
free, the Oeljeklaus-Toma manifold $X=(\mathbf{H}^{s}\times\mathbf{C}%
)/(\mathcal{O}_{K}\rtimes\mathcal{O}_{K}^{\times,+})$ is actually a
classifying space for the group, i.e. its homotopy type is a $B(\mathcal{O}%
_{K}\rtimes\mathcal{O}_{K}^{\times,+})=K(\mathcal{O}_{K}\rtimes\mathcal{O}%
_{K}^{\times,+},1)$, an Eilenberg-MacLane space: $\pi_{1}(X,x)\simeq
\mathcal{O}_{K}\rtimes\mathcal{O}_{K}^{\times,+}$ (and moreover $\pi
_{i}(X,x)=0$ for $i\geq2$, but we do not need this). By Prop.
\ref{Prop_KappaAgreesWithOK} we already know that for the group $\pi
=\mathcal{O}_{K}\rtimes\mathcal{O}_{K}^{\times,+}$ our Equation \ref{ltt7}
becomes
\[
1\longrightarrow\mathcal{O}_{K}\longrightarrow\pi\longrightarrow
\mathcal{O}_{K}^{\times,+}\longrightarrow1\text{,}%
\]
where $K$ is our still unknown number field. Of course we just know
$\mathcal{O}_{K}$ as an abelian group under addition here; we do not know the
ring structure. However, by the rank of rings of integers and Dirichlet's Unit
Theorem, we know that there exist non-unique isomorphisms%
\begin{equation}
\mathcal{O}_{K}\simeq\mathbf{Z}^{s+2t}\qquad\text{and}\qquad\mathcal{O}%
_{K}^{\times,+}\simeq\mathbf{Z}^{s+t-1}\text{,}\label{ltt8}%
\end{equation}
where $s,t$ are the number of real and complex places of $K$. We have $t=1$ by
assumption\footnote{we had restricted our attention to this case in the entire
text right from the beginning}. We recall that for any short exact sequence of
groups as in Equation \ref{ltt7} there is a morphism $\rho:\pi
_{\operatorname*{ab},\operatorname*{fr}}\rightarrow\operatorname*{Out}%
(\varkappa)$, $\phi\mapsto(x\mapsto\tilde{\phi}x\tilde{\phi}^{-1})$, where
$\tilde{\phi}$ is any lift of $\phi\in\pi_{\operatorname*{ab}%
,\operatorname*{fr}}$ to $\pi$ and \textquotedblleft$\operatorname*{Out}%
$\textquotedblright\ denotes the group of outer automorphisms, i.e.
automorphisms modulo conjugations. Since $\mathcal{O}_{K}$ is abelian and
conjugations are trivial, we may lift this $\rho$ to take values in
$\operatorname*{Aut}(\varkappa)$, and moreover $\rho$ recovers the action used
in the formation of the semi-direct product. From Equation \ref{ltt8} we
already know that $\rho$ becomes%
\[
\rho:\mathcal{O}_{K}^{\times,+}\longrightarrow\operatorname*{GL}%
(\mathbf{Z}^{s+2})
\]
in our particular situation. Choosing a different splitting would just change
$\rho$ into an equivalent representation. Since we know that $\pi
=\mathcal{O}_{K}\rtimes\mathcal{O}_{K}^{\times,+}$ was formed by letting
$\mathcal{O}_{K}^{\times,+}$ act by multiplication on $\mathcal{O}_{K}$, we
know that $\rho$ must (up to conjugation) be precisely the multiplication
action. Hence, the minimal polynomial of any $\alpha\in\mathcal{O}_{K}%
^{\times,+}$ acting on $\mathbf{Z}^{s+2}$ will be nothing but its minimal
polynomial as an algebraic number. A generically chosen $\alpha\in
\mathcal{O}_{K}^{\times,+}$ is a primitive element for the field extension
$K/\mathbf{Q}$, so $K$ is uniquely determined. Note moreover that any
conjugation does not affect minimal polynomials, so it does not matter that we
only know $\rho$ up to equivalence. We conclude that all complex eigenvalues
of all $\alpha\in\mathcal{O}_{K}^{\times,+}$ lie in $K^{\operatorname*{g}}$.
Now we are done if adjoining all of them to $\mathbf{Q}$ contains $K$ (because
adjoining all roots of the minimal polynomials produces a Galois extension and
the smallest Galois extension containing $K$ is $K^{\operatorname*{g}}$).
Suppose not. This means that $K$ is a number field such that $\mathcal{O}%
_{K}^{\times,+}$ lies in a proper subfield $L\subsetneqq K$, so even
$\mathcal{O}_{K}^{\times,+}\subseteq\mathcal{O}_{L}^{\times}$, and we get%
\begin{equation}
s^{\prime}+2t^{\prime}<s+2\qquad s\leq s^{\prime}+t^{\prime}-1\label{ltt9}%
\end{equation}
for the real and complex places $s^{\prime},t^{\prime}$ of $L$ by comparing
ranks of rings of integers and units. This forces $t^{\prime}=0$ and
$s^{\prime}=s+1$, so%
\[
s+2=[K:\mathbf{Q}]=[K:L]\cdot\lbrack L:\mathbf{Q}]=[K:L](s+1)\text{,}%
\]
implying $s=0$, which is impossible since our Oeljeklaus-Toma manifolds always
come from number fields with at least one real place.
\end{proof}

\begin{remark}
Instead of identifying $\mathcal{O}_{K}$ inside the fundamental group via the
torsion-free quotient of the abelianization, it can also be characterized as
\textquotedblleft the largest\textquotedblright\ subgroup isomorphic to
$\mathbf{Z}^{n}$ for some $n$. See Proposition
\ref{prop_IdentifyOKAsLargestZnSubgroup} for this alternative perspective.
\end{remark}

There is also the following harmless generalization:

\begin{proposition}
\label{prop_reconstruct_with_U}Let $X$ be known to be of the shape%
\[
X(K;U):=\left(  \mathbf{H}^{s}\times\mathbf{C}\right)  /\left(  \mathcal{O}%
_{K}\rtimes U\right)
\]
with $U$ a finite-index subgroup of $\mathcal{O}_{K}^{\times,+}$ for some
number field $K$ which has $s\geq1$ real places and precisely one complex place.

\begin{enumerate}
\item Then $X(K;U)$ is an LCK manifold.

\item Just knowing its fundamental group $\pi$, $K^{\operatorname*{g}}$ can be
reconstructed from $\pi$ by the same recipe as in Prop. \ref{prop_reconstruct}.

\item We have%
\begin{align*}
&  \sum_{\sigma\in\operatorname*{Gal}(K^{\operatorname*{g}}/\mathbf{Q})}\sigma
U\\
&  \qquad\quad=\left\{  \alpha\in\mathcal{O}_{K^{\operatorname*{g}}}^{\times
}\left\vert
\begin{array}
[c]{l}%
\text{there exists }x\in\pi_{\operatorname*{ab},\operatorname*{fr}}\text{ such
that }\rho(x)\in\operatorname*{GL}\nolimits_{n}(\mathbf{Z})\text{ has}\\
\text{the same minimal polynomial as }\alpha\text{.}%
\end{array}
\right.  \right\}  \text{.}%
\end{align*}
The sum on the left-hand side is the smallest subgroup of $\mathcal{O}%
_{K^{\operatorname*{g}}}^{\times}$ containing $U$ and being closed under the
Galois action of $K^{\operatorname*{g}}/\mathbf{Q}$.
\end{enumerate}
\end{proposition}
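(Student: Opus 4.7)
The proposition makes three claims which I would establish in order. For part (1), the key observation is that since $U$ has finite index in $\mathcal{O}_{K}^{\times,+}$, the natural map $X(K;U)\to X(K;\mathcal{O}_{K}^{\times,+})$ is a finite unbranched covering of complex manifolds (with deck group $\mathcal{O}_{K}^{\times,+}/U$), so the OT locally conformally K\"ahler metric pulls back. Equivalently, one may check admissibility of $U$ directly---proper discontinuity, freeness, and cocompactness of the $\mathcal{O}_{K}\rtimes U$-action are all inherited from the larger group---and then invoke Remark \ref{rmk_XKU_can_it_be_lck}: since $t=1$ the equality-of-absolute-values condition is vacuous.

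For part (2), my plan is to transcribe the proof of Proposition \ref{prop_reconstruct} line by line, with $\mathcal{O}_{K}^{\times,+}$ replaced throughout by $U$. Because $s\geq 1$, the group $\mathcal{O}_{K}^{\times,+}$ is torsion-free, hence so is its finite-index subgroup $U$, and $U\neq 1$. Proposition \ref{Prop_KappaAgreesWithOK} then identifies $\varkappa\cong\mathcal{O}_{K}$ and $\pi_{\operatorname{ab},\operatorname{fr}}\cong U$, with $\rho$ the multiplication action. The one step that uses $\mathcal{O}_{K}^{\times,+}$ in an essential way is the argument that a generic element of the group is a primitive element for $K/\mathbf{Q}$. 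Here I would note that $U$ still has $\mathbf{Z}$-rank $s$ (finite index in a rank-$s$ group), so the rank-counting argument culminating in (\ref{ltt9}) applies unchanged: if $U\subseteq \mathcal{O}_{L}^{\times}$ for some proper subfield $L\subsetneq K$, one derives $s=0$, a contradiction.

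For part (3), the key computation is that for $x\in\pi_{\operatorname{ab},\operatorname{fr}}\cong U$, the matrix $\rho(x)\in \operatorname{GL}_{s+2}(\mathbf{Z})$ is the multiplication-by-$x$ map on $\mathcal{O}_{K}\cong \mathbf{Z}^{s+2}$, so its matrix minimal polynomial coincides with the minimal polynomial of $x$ as an algebraic number in $K$. Consequently $\alpha\in\mathcal{O}_{K^{\operatorname{g}}}^{\times}$ has the same minimal polynomial as some $\rho(x)$ exactly when $\alpha$ is a $\operatorname{Gal}(K^{\operatorname{g}}/\mathbf{Q})$-conjugate of some $u\in U$, i.e.\ $\alpha\in\bigcup_{\sigma\in G}\sigma(U)$. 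This set is plainly stable under $G=\operatorname{Gal}(K^{\operatorname{g}}/\mathbf{Q})$ and contains $U$; interpreting $\sum_{\sigma}\sigma U$ as in the clarifying sentence---the smallest Galois-closed subgroup of $\mathcal{O}_{K^{\operatorname{g}}}^{\times}$ containing $U$---one sees that $\sum_{\sigma}\sigma U$ is precisely the multiplicative subgroup generated by $\bigcup_{\sigma}\sigma(U)$, which completes the identification.

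The hardest part here is really careful bookkeeping: verifying that every place where the proof of Proposition \ref{prop_reconstruct} invoked the full unit group only actually needed the group to be torsion-free and of rank $s$, both of which remain true for any finite-index subgroup $U$. No genuinely new idea seems to be required beyond Propositions \ref{Prop_KappaAgreesWithOK} and \ref{prop_reconstruct}.
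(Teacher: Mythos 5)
Your parts (1) and (2) are correct and take essentially the paper's route. The paper's proof of (1) is exactly your second argument: a finite-index subgroup of an admissible group is again admissible, so $X(K;U)$ is an instance of the Oeljeklaus--Toma construction, and for $t=1$ the LCK condition of Remark \ref{rmk_XKU_can_it_be_lck} is automatic; your covering-space pullback along $X(K;U)\rightarrow X(K;\mathcal{O}_{K}^{\times,+})$ is a harmless (correct) variant. For (2), the paper likewise says the proof of Prop.\ \ref{prop_reconstruct} applies word for word, the only new input being that a finite-index subgroup of $\mathbf{Z}^{n}$ is again isomorphic to $\mathbf{Z}^{n}$; your bookkeeping --- $U$ torsion-free because $s\geq1$ forces $\mathcal{O}_{K,\operatorname*{tor}}^{\times}=\{\pm1\}$, $U\neq 1$ and of rank $s$, so Prop.\ \ref{Prop_KappaAgreesWithOK} and the rank count culminating in (\ref{ltt9}) go through --- is precisely what the paper leaves implicit.

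In part (3), however, there is a gap at your last step. Your computation correctly identifies the right-hand side with the set of Galois conjugates $\bigcup_{\sigma\in G}\sigma(U)$, where $G=\operatorname{Gal}(K^{\operatorname{g}}/\mathbf{Q})$; this matching of minimal polynomials is also the entire content of the paper's one-sentence proof. But a union of subgroups is not a subgroup, so this does \emph{not} "complete the identification" with $\sum_{\sigma}\sigma U$, which by definition is the subgroup \emph{generated} by that union and is in general strictly larger. Concretely, take $K$ a generic cubic field ($s=t=1$) with $U=\langle u\rangle$ and let $u_{1},u_{2},u_{3}$ be the conjugates of $u$ in $K^{\operatorname{g}}$. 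The norm relation $u_{1}u_{2}u_{3}=1$ puts products of two distinct conjugates back into the union (e.g.\ $u_{1}u_{2}=u_{3}^{-1}$), but $u_{1}u_{2}^{2}=u_{2}/u_{3}$ lies in $\sum_{\sigma}\sigma U$ and generically has degree $6$ over $\mathbf{Q}$, whereas every element of the right-hand side shares its minimal polynomial with some $x\in U\subseteq K$ and hence has degree dividing $3$. So what your argument actually proves is that the right-hand set equals $\bigcup_{\sigma}\sigma(U)$ and \emph{generates} the left-hand group; the asserted equality of sets needs either a proof that the union is already a group (which the example above suggests is false in general) or a weakening of the statement. In fairness, the paper's own proof of (3) is the same one-liner and is silent on exactly this point, so you have faithfully reproduced its reasoning --- the defect sits in the formulation of (3) itself, and an honest write-up should flag and repair it rather than assert the identification.
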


\begin{proof}
(1) Once $\mathcal{O}_{K}^{\times,+}$ is admissible, any finite-index subgroup
like $U$ is as well, so $X(K;U)$ is just an instance of the construction in
\cite{MR2141693}. (2) The proof of Prop. \ref{prop_reconstruct} applies word
for word, just replace each $\mathcal{O}_{K}^{\times,+}$ by $U$ and whenever
Dirichlet's Unit Theorem is applied, use that a finite-index subgroup of
$\mathbf{Z}^{n}$ must itself be isomorphic to $\mathbf{Z}^{n}$. (3) For any
$\alpha\in\pi_{\operatorname*{ab},\operatorname*{fr}}\cong U$ the minimal
polynomial of $\rho(\alpha)$ matches the minimal polynomial of $\alpha$ as an
algebraic integer.
\end{proof}

\begin{example}
\label{Example_CannotReconstructForMoreComplexPlaces}In this example we will
show that for $t>1$ complex places, the construction $X(K;U)$ of
\cite{MR2141693} can also produce (non-LCK) complex manifolds with isomorphic
fundamental groups so that the Galois closures of their underlying number
fields differ. Thus, the scope of Proposition \ref{prop_reconstruct} does not
extend to general $X(K;U)$. To this end, consider the number fields%
\[
L_{1}:=\mathbf{Q}[S]/(S^{3}+S+1)\quad L_{2}:=\mathbf{Q}[T]/(T^{3}-T+2)\quad
L_{3}:=\mathbf{Q}[T]/(T^{3}-T+1)\text{.}%
\]
All of these fields satisfy $s=t=1$. Henceforth, we shall also write $S$ and
$T$ to denote the image of these elements in these fields. The element $S$
lies in $\mathcal{O}_{L_{1}}^{\times}$ since its minimal polynomial
$S^{3}+S+1$ has constant coefficient $1$ and therefore must be a unit. We
compute its single real embedding to be $-0.6823\ldots$, so $S^{2}%
\in\mathcal{O}_{L_{1}}^{\times,+}$ generates a subgroup isomorphic to
$\mathbf{Z}$. One can show that $\mathcal{O}_{L_{1}}^{\times,+}=\mathbf{Z}%
\left\langle -S\right\rangle $, but we will not need to know this. Now fix
$i=2,3$ and consider the compositum%
\[%
%TCIMACRO{\TeXButton{TeX field}{\xymatrix{
%& L_{1}\cdot L_{i} \\
%L_{1} \ar[ur] & & L_{i} \ar[ul] \\
%& \mathbf{Q}. \ar[ul] \ar[ur] \\
%}}}%
%BeginExpansion
\xymatrix{
& L_{1}\cdot L_{i} \\
L_{1} \ar[ur] & & L_{i} \ar[ul] \\
& \mathbf{Q}. \ar[ul] \ar[ur] \\
}%
%EndExpansion
\]
We find that $L_{1}\cdot L_{i}$ is a degree $9$ number field with $s=1$ real
places and $t=4$ complex places. Since $T$ is integral, the submodule%
\begin{equation}
J_{i}:=\mathbf{Z}\left\langle 1,S,S^{2},T,T^{2},ST,S^{2}T,ST^{2},S^{2}%
T^{2}\right\rangle \qquad\subset L_{1}\cdot L_{i} \label{lup1}%
\end{equation}
defines a subring of the ring of integers and one checks that we actually have
equality. As far as I can tell confirming this is the only difficult part of
this computation. See Code \ref{comp_code_SageForExampleNoReconstruct} on page
\pageref{comp_code_SageForExampleNoReconstruct} for a verification by
computer. Since our number field has precisely one real place, the admissible
subgroups of $\mathcal{O}_{L_{1}\cdot L_{i}}^{\times,+}$ (in the sense of
\cite{MR2141693}) have rank one and we will simply take $S^{2}\in
\mathcal{O}_{L_{1}}^{\times,+}$ as the generator. The action on the basis
elements of Equation \ref{lup1} is easy to compute, we obtain%
\begin{equation}%
\begin{array}
[c]{lll}%
S\cdot1=S & S\cdot T=ST & S\cdot S^{2}T=(-S-1)T\\
S\cdot S=S^{2} & S\cdot T^{2}=ST^{2} & S\cdot ST^{2}=S^{2}T^{2}\\
S\cdot S^{2}=-S-1 & S\cdot ST=S^{2}T & S\cdot S^{2}T^{2}=(-S-1)T^{2}\text{.}%
\end{array}
\label{lup2}%
\end{equation}
The action of $S^{2}$ follows immediately. The main point here is that this
table does not depend on whether $i=2$ or $3$. For the complex manifolds
$X_{i}:=(\mathbf{H}\times\mathbf{C}^{4})/(\mathcal{O}_{L_{1}\cdot L_{i}%
}\rtimes\left\langle S^{2}\right\rangle )$ of \cite[\S 1]{MR2141693} this
table entirely determines the group structure of the semi-direct product%
\[
\pi_{1}(X_{i},\ast)=J_{i}\rtimes\left\langle S^{2}\right\rangle =\mathbf{Z}%
^{9}\rtimes\mathbf{Z}\text{.}%
\]
Hence, $X_{2}$ and $X_{3}$ have isomorphic fundamental groups. Since they are
actually classifying spaces, it follows that they even have the same homotopy
type. It follows from a result of Oeljeklaus-Toma \cite[Prop. 2.9]{MR2141693}
that the manifolds $X_{i}$ \textsl{do not }admit LCK metrics. They are also
concrete examples which are not `of simple type' (in the sense of
\cite[Definition 1.5 and Remark 1.8]{MR2141693}). Their underlying number
fields have Galois closure $(L_{1}\cdot L_{i})^{\operatorname*{g}}%
=L_{1}^{\operatorname*{g}}\cdot L_{i}^{\operatorname*{g}}$. It is easy to
compute $L_{i}^{\operatorname*{g}}$ for $i=1,2,3$ and we find that each of
them has degree $6$ over $\mathbf{Q}$. We also find that $L_{1}%
^{\operatorname*{g}}\cdot L_{2}^{\operatorname*{g}}\cdot L_{3}%
^{\operatorname*{g}}$ has degree $6^{3}=216$ over $\mathbf{Q}$. This implies
that the degree $36$ fields $L_{1}^{\operatorname*{g}}\cdot L_{2}%
^{\operatorname*{g}}$ and $L_{1}^{\operatorname*{g}}\cdot L_{3}%
^{\operatorname*{g}}$ must be different. See Code
\ref{comp_code_SageForExampleNoReconstruct} on page
\pageref{comp_code_SageForExampleNoReconstruct} for an automated verification
of this example by a computer algebra system.
\end{example}

Going far beyond the case of just Oeljeklaus-Toma manifolds, it is a classical
theorem due to Mostow that any two compact solvmanifolds with isomorphic
fundamental groups must be diffeomorphic \cite[Theorem A]{MR0061611}.

\section{\label{sect_InvariantVolumeForm}The invariant volume form}

Oeljeklaus and\ Toma found a very nice canonical LCK\ metric on
$X(K;\mathcal{O}_{K}^{\times,+})$. See \cite{MR1481969} or \cite{MR0418003}
for an introduction to LCK metrics. This Hermitian metric is induced from a
global K\"{a}hler potential on the universal covering space $\mathbf{H}%
^{s}\times\mathbf{C}=\{(z_{1},\ldots,z_{s+1})\mid z_{1},\ldots,z_{s}%
\in\mathbf{H}$, $z_{s+1}\in\mathbf{C}\}$. The relevant K\"{a}hler potential is%
\begin{equation}
\phi:=\phi_{1}+\left\vert z_{s+1}\right\vert ^{2} \label{lyt1}%
\end{equation}
with%
\[
\phi_{1}:=\prod_{j=1}^{s}\frac{i}{(z_{j}-\overline{z_{j}})}=\frac{1}{2^{s}%
}\prod_{j=1}^{s}\frac{1}{y_{j}}\text{,}%
\]
where $z_{j}=x_{j}+iy_{j}$ for $x_{j},y_{j}\in\mathbf{R}$, cf.
\cite{MR2141693}, modulo the typo corrected in \cite{MR2875828}. Then the
underlying Riemannian metric and the associated $(1,1)$-form are given by%
\begin{equation}
g=\frac{1}{2}%
%TCIMACRO{\tsum _{k,l=1}^{s+1}}%
%BeginExpansion
{\textstyle\sum_{k,l=1}^{s+1}}
%EndExpansion
g_{kl}\left(  \mathrm{d}z_{k}\otimes\mathrm{d}\overline{z_{l}}+\mathrm{d}%
\overline{z_{l}}\otimes\mathrm{d}z_{k}\right)  \qquad\omega=%
%TCIMACRO{\tsum _{k,l=1}^{s+1}}%
%BeginExpansion
{\textstyle\sum_{k,l=1}^{s+1}}
%EndExpansion
\frac{i}{2}g_{kl}\mathrm{d}z_{k}\wedge\mathrm{d}\overline{z_{l}} \label{lyt2}%
\end{equation}
with $g_{kl}:=(\partial_{z_{k}}\partial_{\overline{z_{l}}}\phi)$. One may
follow the efficient explicit computation in \cite[proof of Thm.
5.1]{MR2875828}, leading us to
\[
(g_{kl})=%
\begin{pmatrix}
\frac{\phi_{1}}{2y_{1}^{2}} & \frac{\phi_{1}}{4y_{1}y_{2}} & \frac{\phi_{1}%
}{4y_{1}y_{3}} & \cdots & 0\\
\frac{\phi_{1}}{4y_{1}y_{2}} & \frac{\phi_{1}}{2y_{2}^{2}} &  &  & 0\\
\vdots &  & \ddots &  & \vdots\\
\frac{\phi_{1}}{4y_{1}y_{s}} &  &  & \frac{\phi_{1}}{2y_{s}^{2}} & 0\\
0 & 0 & \cdots & 0 & 2
\end{pmatrix}
\text{.}%
\]
In particular, the determinant of $\left(  g_{kl}\right)  $ is twice the
determinant of the top left $(s\times s)$-minor. For the latter, the Leibniz
formula yields%
\begin{align*}
&  \det(\text{top left }(s\times s)\text{-minor})\\
&  \qquad=\sum_{\sigma\in\Sigma_{s}}\operatorname*{sgn}(\sigma)2^{\#\{j\mid
j=\sigma(j)\}}\frac{\phi_{1}}{4y_{1}y_{\sigma(1)}}\cdots\frac{\phi_{1}}%
{4y_{s}y_{\sigma(s)}}\\
&  \qquad=\frac{\phi_{1}^{s}}{4^{s}}\left(  \sum_{\sigma\in\Sigma_{s}%
}\operatorname*{sgn}(\sigma)2^{\#\{j\mid j=\sigma(j)\}}\right)  \frac{1}%
{y_{1}^{2}\cdots y_{s}^{2}}\text{.}%
\end{align*}
The inner bracket is easily seen to be $s+1$. One way to evaluate this is as
follows: We readily see that the bracket agrees with%
\[
\det%
\begin{pmatrix}
2 & 1 & \cdots & 1\\
1 & 2 & 1 & \vdots\\
\vdots & 1 & 2 & 1\\
1 & \cdots & 1 & 2
\end{pmatrix}
=\left(  -1\right)  ^{s}\det\left(  -\operatorname*{id}-%
\begin{pmatrix}
1 & \cdots & 1\\
\vdots & 1 & \vdots\\
1 & \cdots & 1
\end{pmatrix}
\right)  \text{,}%
\]
so the determinant is nothing but $\left(  -1\right)  ^{s}p_{A}(-1)$ with
$p_{A}(t):=\det(t-A)$ the characteristic polynomial of the matrix $A$ whose
entries are all $1$. The latter has the single non-zero eigenvector
$(1,1,\ldots,1)^{t}$ with eigenvalue $s$. Therefore, the characteristic
polynomial is $(t-s)t^{s-1}$. Hence, $(-1)^{s}p_{A}(-1)=(-1-s)(-1)^{2s-1}%
=(s+1)$, proving the claim. Returning to our main computation,%
\begin{equation}
\det(g_{kl})=2\cdot\frac{(s+1)}{4^{s}}\frac{1}{y_{1}^{2}\cdots y_{s}^{2}}%
\phi_{1}^{s}=\frac{(s+1)}{2^{2s+s^{2}-1}}\frac{1}{y_{1}^{s+2}\cdots
y_{s}^{s+2}}\text{.} \label{ltf1}%
\end{equation}
The K\"{a}hler potential of Equation \ref{lyt1} defines a genuine K\"{a}hler
form on $\mathbf{H}^{s}\times\mathbf{C}$. One easily checks that translations
from $\mathcal{O}_{K}$ leave it invariant, while the multiplication with
elements $\alpha\in\mathcal{O}_{K}^{\times,+}$ changes it by a homothety. More
precisely, from Equation \ref{lyt2} we get%
\[
\omega=\frac{i}{2^{s+3}y_{1}\cdots y_{s}}\left(  \sum_{k,l=1}^{s}%
\frac{2^{\delta_{k=l}}}{y_{k}y_{l}}\mathrm{d}z_{k}\wedge\mathrm{d}%
\overline{z_{l}}\right)  +i\mathrm{d}z_{s+1}\wedge\mathrm{d}\overline{z_{s+1}}%
\]
and therefore%
\begin{align*}
\alpha^{\ast}\omega &  =\frac{i}{2^{s+3}\sigma_{1}(\alpha)\cdots\sigma
_{s}(\alpha)y_{1}\cdots y_{s}}\left(  \sum_{k,l=1}^{s}\frac{2^{\delta_{k=l}}%
}{\sigma_{k}(\alpha)\sigma_{l}(\alpha)y_{k}y_{l}}\sigma_{k}(\alpha
)\overline{\sigma_{l}(\alpha)}\mathrm{d}z_{k}\wedge\mathrm{d}\overline{z_{l}%
}\right) \\
&  +i\left\vert \sigma_{s+1}(\alpha)\right\vert ^{2}\mathrm{d}z_{s+1}%
\wedge\mathrm{d}\overline{z_{s+1}}\text{.}%
\end{align*}
Here $\alpha^{\ast}$ denotes the pullback along the action of $\alpha
\in\mathcal{O}_{K}^{\times,+}$. Note that $\prod_{j=1}^{s+2}\sigma_{j}%
(\alpha)=N(\alpha)=+1$ (usually $\pm1$ since it is a unit, but $+1$ since all
real embeddings $\sigma_{j}(\alpha)>0$ are positive by assumption and the
remaining factor is $\left\vert \sigma_{s+1}(\alpha)\right\vert ^{2}%
=\left\vert \sigma_{s+1}(\alpha)\sigma_{s+2}(\alpha)\right\vert $ of the last
pair of complex embeddings; this is also positive). Hence, $\frac{1}%
{\sigma_{1}(\alpha)\cdots\sigma_{s}(\alpha)}=\left\vert \sigma_{s+1}%
(\alpha)\right\vert ^{2}$, thus $\alpha^{\ast}\omega=\left\vert \sigma
_{s+1}(\alpha)\right\vert ^{2}\omega$. The group $\mathcal{O}_{K}%
\rtimes\mathcal{O}_{K}^{\times,+}$ acts by homotheties on the honest
K\"{a}hler form $\omega$, therefore it does not descend to the quotient and
will not equip it with a K\"{a}hler metric itself, but it means that the
quotient is at least locally conformally K\"{a}hler (LCK). The relevant
invariant form is%
\[
\tilde{\omega}:=y_{1}\cdots y_{s}\omega\qquad\text{i.e.}\qquad\tilde{\omega
}:=e^{f}\omega\text{ with }f:=\log(y_{1}\cdots y_{s})\text{,}%
\]
i.e. this form is invariant under the group action, descends to the
Oeljeklaus-Toma manifold, but is not K\"{a}hler anymore. We compute
$\alpha^{\ast}\tilde{\omega}=\tilde{\omega}$ and $d\tilde{\omega}%
=df\wedge\tilde{\omega}$. In particular, the Lee form \cite[Ch. 1]{MR1481969}
of an Oeljeklaus-Toma manifold is%
\[
\theta:=df=d\log(y_{1}\cdots y_{s})=\sum_{j=1}^{s}d\log(y_{j})=\sum_{j=1}%
^{s}\frac{\mathrm{d}y_{j}}{y_{j}}\text{.}%
\]

\begin{remark}
[Inoue surface of type $\mathrm{S}^{0}$]In the case $s=1$ this simplifies to%
\[
\tilde{\omega}=\frac{i}{8}\frac{\mathrm{d}z_{1}\wedge\mathrm{d}\overline
{z_{1}}}{y_{1}^{2}}+iy_{1}\mathrm{d}z_{2}\wedge\mathrm{d}\overline{z_{2}%
}\text{,}%
\]
which is essentially the $(1,1)$-form associated to the Tricerri metric.
\end{remark}

The Oeljeklaus-Toma manifold comes with a canonical volume form. Just like
$\mathbf{H}^{s}\times\mathbf{C}$ carries the K\"{a}hler form $\omega$ and an
invariant form $\tilde{\omega}$ associated to the LCK metric, $\mathbf{H}%
^{s}\times\mathbf{C}$ has a canonical volume form $vol$ from $\omega$, and an
invariant counterpart $\widetilde{vol}$ belonging to $\tilde{\omega}$. The
volume form can be computed for example by%
\[
vol=\frac{\omega^{s+1}}{(s+1)!}=\left(  \frac{i}{2}\right)  ^{s+1}\det
(g_{kl})\mathrm{d}z_{1}\wedge\mathrm{d}\overline{z_{1}}\wedge\cdots
\wedge\mathrm{d}z_{s+1}\wedge\mathrm{d}\overline{z_{s+1}}\text{,}%
\]
which we can unravel further thanks to Equation \ref{ltf1}. We may now either
switch to $\tilde{\omega}$, or we can equivalently work with the scaled metric
$(y_{1}\cdots y_{s}g_{kl})$ instead of $(g_{kl})$. Then the determinant scales
to $\det(y_{1}\cdots y_{s}\cdot g_{kl})=(y_{1}\cdots y_{s})^{s+1}\det(g_{kl})$
since $(g_{kl})$ is a $(s+1)\times(s+1)$-matrix. Hence, we obtain
\[
\widetilde{vol}:=\left(  \frac{i}{2}\right)  ^{s+1}\frac{(s+1)}{2^{2s+s^{2}%
-1}}\frac{1}{y_{1}\cdots y_{s}}\mathrm{d}z_{1}\wedge\mathrm{d}\overline{z_{1}%
}\wedge\cdots\wedge\mathrm{d}z_{s+1}\wedge\mathrm{d}\overline{z_{s+1}}\text{,}%
\]
which we may rewrite as%
\begin{equation}
\widetilde{vol}=\frac{(s+1)}{2^{2s+s^{2}-1}}\frac{1}{y_{1}\cdots y_{s}%
}\mathrm{d}x_{1}\wedge\mathrm{d}y_{1}\wedge\cdots\wedge\mathrm{d}x_{s+1}%
\wedge\mathrm{d}y_{s+1}\text{.} \label{lcw1}%
\end{equation}
Note that by writing $\frac{\mathrm{d}y}{y}=\mathrm{d}\log y$, this looks just
like the Euclidean volume form in suitable coordinates, say $y:=e^{r}$ or
$e^{2r}$. This is the key reason why Prop. \ref{prop_main} will turn out to be
true.%
\[
=\frac{(s+1)}{2^{2s+s^{2}-1}}\cdot\bigwedge_{j=1}^{s}(\mathrm{d}x_{j}%
\wedge\mathrm{d}\log(y_{j}))\wedge\mathrm{d}x_{s+1}\wedge\mathrm{d}%
y_{s+1}\text{.}%
\]
In the next section we shall tailor a fundamental domain suitable to this
volume form.

\section{A fundamental domain}

Fix a number field $K$ with $s\geq1$ real places and precisely one complex
place.\medskip

In this section we shall determine a fundamental domain for the action of
$\mathcal{O}_{K}\rtimes(\mathcal{O}_{K}^{\times,+})^{2}$ on $\mathbf{H}%
^{s}\times\mathbf{C}$. By Lemma \ref{lemma_sl2_vs_OT_action}\ this action is
precisely the same as the action of the (almost) \textquotedblleft standard
Borel\textquotedblright%
\[%
\begin{pmatrix}
a & b\\
& a^{-1}%
\end{pmatrix}
\subset\operatorname*{SL}\nolimits_{2}(\mathcal{O}_{K})\qquad\text{(with }%
a\in\mathcal{O}_{K}^{\times,+}\text{, }b\in\mathcal{O}_{K}\text{).}%
\]
Here $\operatorname*{SL}\nolimits_{2}(\mathcal{O}_{K})$ acts under the
diagonal embeddings of Equation \ref{ly6}. On the individual factors
$\mathbf{H}$ this is precisely the M\"{o}bius action. We will therefore work
with the coordinates coming from the Iwasawa decomposition of
$\operatorname*{SL}\nolimits_{2}(\mathbf{R})$, see Equation \ref{l2}. It
parametrizes $\mathbf{H}$ in exactly this shape, namely $\mathbf{H}\simeq
A\cdot N$. We use the following explicit coordinates:%
\begin{equation}%
\begin{pmatrix}
e^{r} & b\\
& e^{-r}%
\end{pmatrix}
=%
\begin{pmatrix}
\sqrt{y} & \frac{x}{\sqrt{y}}\\
& \frac{1}{\sqrt{y}}%
\end{pmatrix}
\label{ltz1}%
\end{equation}
with $r,b\in\mathbf{R}$, giving a semi-direct product presentation%
\begin{align}
&  0\longrightarrow\mathbf{R}\longrightarrow A\cdot N\longrightarrow
\mathbf{R}\longrightarrow0\label{ltz2}\\
&  b\mapsto%
\begin{pmatrix}
1 & b\\
& 1
\end{pmatrix}
\text{,}\qquad%
\begin{pmatrix}
e^{r} & b\\
& e^{-r}%
\end{pmatrix}
\mapsto r\text{.}\nonumber
\end{align}
Replicating the decomposition stemming from the coordinates of Equation
\ref{ltz2} for each real place, we get%
\begin{equation}%
\begin{array}
[c]{ccccccc}%
\mathbf{R}^{s}\times\mathbf{C} & \longrightarrow & \mathbf{R} & \times
\cdots\times & \mathbf{R} & \times & \mathbf{C}\\
\downarrow &  & \downarrow &  & \downarrow &  & \downarrow\\
\mathbf{H}^{s}\times\mathbf{C} & \longrightarrow & AN & \times\cdots\times &
AN & \times & \mathbf{C}\\
\downarrow &  & \downarrow &  & \downarrow &  & \\
\mathbf{R}^{s} & \longrightarrow & \mathbf{R} & \times\cdots\times &
\mathbf{R}\text{.} &  &
\end{array}
\label{l3}%
\end{equation}
This picture is also related to the solvmanifold viewpoint proposed by Kasuya
\cite{MR3033950}. Under the diagonal embedding of Equation \ref{ly6}, we have
$\mathcal{O}_{K}\rtimes(\mathcal{O}_{K}^{\times,+})^{2}\subset\prod_{j=1}%
^{s}AN\times\operatorname*{Aut}\mathbf{C}$, and on the factors $AN$ this group
action is just matrix multiplication, see Lemma \ref{lemma_sl2_vs_OT_action}.
Moreover, the arrow%
\begin{equation}%
\begin{array}
[c]{cccc}%
\mathbf{H}^{s}\times\mathbf{C} & \qquad &
\begin{pmatrix}
e^{r_{i}} & b_{i}\\
& e^{-r_{i}}%
\end{pmatrix}
_{i=1,\ldots,s}\times(b_{s+1}) & \\
\downarrow &  & \downarrow & \\
\mathbf{R}^{s} &  & (r_{1},\ldots,r_{s}) &
\end{array}
\label{lq1}%
\end{equation}
is $\mathcal{O}_{K}\rtimes(\mathcal{O}_{K}^{\times,+})^{2}$-equivariant, where
the action on the bottom row unravels to factor through%
\[
\mathcal{O}_{K}\rtimes(\mathcal{O}_{K}^{\times,+})^{2}\twoheadrightarrow
(\mathcal{O}_{K}^{\times,+})^{2}%
\]
and $\alpha\in(\mathcal{O}_{K}^{\times,+})^{2}$ is easily seen to act as
translations%
\[
\alpha\cdot(r_{1},\ldots,r_{s})=(\log\left\vert \sigma_{1}\alpha\right\vert
+r_{1},\ldots,\log\left\vert \sigma_{s}\alpha\right\vert +r_{s})\text{.}%
\]
By Dirichlet's Unit Theorem we can pick free generators $(\mathcal{O}%
_{K}^{\times,+})^{2}=\mathbf{Z}\left\langle \varepsilon_{1},\ldots
,\varepsilon_{s}\right\rangle \simeq\mathbf{Z}^{s}$, i.e. a multiplicatively
independent system of units in this group. Then
\begin{align*}
\Lambda:=  &  \left\{  \sum_{i=1}^{s}\beta_{i}B_{i}\mid0\leq\beta
_{i}<1\right\} \\
B_{i}:=  &  (\log\left\vert \sigma_{1}(\varepsilon_{i})\right\vert
,\ldots,\log\left\vert \sigma_{s}(\varepsilon_{i})\right\vert )^{t}%
\end{align*}
is a fundamental domain for the action of $\mathcal{O}_{K}\rtimes
(\mathcal{O}_{K}^{\times,+})^{2}$ on the base row of Diagram \ref{l3}. Next,
suppose we are given an element in the middle row, say $(b_{1},\ldots
,b_{s},b_{s+1},r_{1},\ldots,r_{s})$ with $b_{1},\ldots,b_{s}\in\mathbf{R}$,
$b_{s+1}\in\mathbf{C}$, $r_{1},\ldots,r_{s}\in\mathbf{R}$. Then for
\textit{fixed} $r_{1},\ldots,r_{s}$ an element $\alpha\in\mathcal{O}%
_{K}\subset\mathcal{O}_{K}\rtimes(\mathcal{O}_{K}^{\times,+})^{2}$ is easily
checked to act as%
\[%
\begin{pmatrix}
1 & \sigma_{i}\alpha\\
& 1
\end{pmatrix}
\cdot%
\begin{pmatrix}
e^{r_{i}} & b_{i}\\
& e^{-r_{i}}%
\end{pmatrix}
=%
\begin{pmatrix}
e^{r_{i}} & b_{i}+e^{-r_{i}}\sigma_{i}(\alpha)\\
& e^{-r_{i}}%
\end{pmatrix}
\]
in the $i$-th coordinate. In particular, the orbit under $\mathcal{O}_{K}$
stays in the same fiber over $r_{1},\ldots,r_{s}$. Fixing the fiber, we see
that $\mathcal{O}_{K}$ acts solely on the coordinates $b_{1},\ldots,b_{s}%
\in\mathbf{R}$ and $b_{s+1}\in\mathbf{C}$ by translation. Moreover, if we pick
generators $\mathcal{O}_{K}=\mathbf{Z}\left\langle a_{1},\ldots,a_{s+2}%
\right\rangle \simeq\mathbf{Z}^{s+2}$ a fundamental domain for the action of
$\mathcal{O}_{K}$ \textit{in the fiber over} $r_{1},\ldots,r_{s}$ is given by%
\begin{align}
\Phi(r_{1},\ldots,r_{s}):=  &  \left\{  \sum_{i=1}^{s+2}\alpha_{i}\tilde
{A}_{i}\mid0\leq\alpha_{i}<1\right\} \label{l5}\\
\text{with }\tilde{A}_{i}:=  &  (e^{-r_{1}}\sigma_{1}(a_{i}),\ldots,e^{-r_{s}%
}\sigma_{s}(a_{i}),\sigma_{s+1}(a_{i}))^{t}\text{.}\nonumber
\end{align}

\begin{proposition}
\label{prop_fund_domain}The set%
\begin{align*}
\mathcal{F}und  &  :=\left\{  \coprod_{(r_{1},\ldots,r_{s})\in\Lambda}^{\cdot
}\Phi(r_{1},\ldots,r_{s})\right\} \\
&  =\left\{  (r_{1},\ldots,r_{s},b_{1},\ldots,b_{s+1})\left\vert
\begin{array}
[c]{l}%
r_{1},\ldots,r_{s}\in\Lambda\\
b_{1},\ldots,b_{s+1}\in\Phi(r_{1},\ldots,r_{s})
\end{array}
\right.  \right\}
\end{align*}
is a fundamental domain for the action of $\mathcal{O}_{K}\rtimes
(\mathcal{O}_{K}^{\times,+})^{2}$ on $\mathbf{H}^{s}\times\mathbf{C}$ in
$(r_{i},b_{i})$-coordinates.
\end{proposition}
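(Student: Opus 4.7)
The plan is to exploit the equivariant projection $\mathbf{H}^s\times\mathbf{C}\to\mathbf{R}^s$ of Diagram \ref{l3}. Because $\mathcal{O}_K$ acts trivially on the base, the base action factors through the quotient $(\mathcal{O}_K^{\times,+})^2$; I will first exhibit $\Lambda$ as a fundamental domain for this base action, then show that $\Phi(r_1,\ldots,r_s)$ is a fundamental domain for the fiberwise action of $\mathcal{O}_K$ over any point of $\Lambda$, and finally glue the two.

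For the base, I would verify that $B_1,\ldots,B_s$ is an $\mathbf{R}$-basis of $\mathbf{R}^s$. By Dirichlet's Unit Theorem, the full logarithmic embedding $\alpha\mapsto(\log|\sigma_1\alpha|,\ldots,\log|\sigma_s\alpha|,2\log|\sigma_{s+1}\alpha|)$ sends $\mathcal{O}_K^{\times,+}$ into the trace-zero hyperplane as a lattice of rank $s+t-1=s$; since that hyperplane projects bijectively onto $\mathbf{R}^s$ after dropping the last coordinate (via the product formula $N(\alpha)=1$), the images $B_i$ of the chosen generators $\varepsilon_i\in(\mathcal{O}_K^{\times,+})^2$ indeed form an $\mathbf{R}$-basis. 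Consequently $\Lambda$ is a half-open fundamental parallelepiped for the translation action by $(\mathcal{O}_K^{\times,+})^2\cong\mathbf{Z}\langle B_1,\ldots,B_s\rangle$ on $\mathbf{R}^s$, meeting each orbit exactly once.

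For a fixed $(r_1,\ldots,r_s)$ the stabilizer in the base action is trivial, so the only part of $\mathcal{O}_K\rtimes(\mathcal{O}_K^{\times,+})^2$ still acting on the fiber is $\mathcal{O}_K$, via translation by the vectors $\tilde A_i$ given in Equation \ref{l5}. I would show these $s+2$ vectors are $\mathbf{R}$-linearly independent in $\mathbf{R}^{s+2}=\mathbf{R}^s\times\mathbf{C}$: the positive factors $e^{-r_j}$ do not affect independence, and the Minkowski matrix $(\sigma_j(a_i))$ (with the complex row split into its real and imaginary parts) has nonzero determinant $\pm 2^{-t}\sqrt{|\triangle_{K/\mathbf{Q}}|}$. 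Hence $\Phi(r_1,\ldots,r_s)$ is a half-open fundamental parallelepiped for the $\mathcal{O}_K$-action on the fiber.

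To combine the two ingredients, given $p\in\mathbf{H}^s\times\mathbf{C}$ act first by the unique $\beta\in(\mathcal{O}_K^{\times,+})^2$ placing the base coordinates into $\Lambda$, then by the unique $\alpha\in\mathcal{O}_K$ placing the resulting fiber coordinates into $\Phi$. Conversely, if two points of $\mathcal{F}und$ lie in one orbit under some $(\alpha,\beta)$, projecting to the base forces $\beta=1$ by uniqueness in $\Lambda$, after which $\alpha=0$ by uniqueness in $\Phi$. The only real obstacle is bookkeeping with the semi-direct product: one has to note that although an element $(\alpha,\beta)$ mixes fiber and base, the normality of $\mathcal{O}_K$ together with the order \emph{base first, fiber second} lets one apply the two uniqueness statements independently, and the half-open conventions for $\Lambda$ and $\Phi$ then single out exactly one representative per orbit.
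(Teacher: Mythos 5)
Your proposal is correct and takes essentially the same route as the paper's proof: both use the equivariant projection of Diagram \ref{l3}, invoke Dirichlet's Unit Theorem via the log-norm hyperplane projecting linearly isomorphically onto $\mathbf{R}^{s}$ to make $\Lambda$ a fundamental parallelepiped for the base action of $(\mathcal{O}_{K}^{\times,+})^{2}$, take $\Phi(r_{1},\ldots,r_{s})$ as a fundamental parallelepiped for the fiberwise translation action of $\mathcal{O}_{K}$, and establish injectivity by first forcing the unit component to be trivial on the base (your \textquotedblleft uniqueness in $\Lambda$\textquotedblright\ step is the paper's observation that $\mathcal{O}_{K}$ is the largest subgroup stabilizing a fiber) and then the translation component in the fiber. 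The only cosmetic difference is that you make the $\mathbf{R}$-linear independence of the $\tilde{A}_{i}$ explicit via the nonvanishing Minkowski determinant $\pm 2^{-t}\sqrt{\left\vert \triangle_{K/\mathbf{Q}}\right\vert}$, a point the paper leaves implicit until the volume computation.
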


\begin{proof}
[Proof of Prop. \ref{prop_fund_domain}]We prove that the inclusion%
\[
\mathcal{F}und\hookrightarrow(\mathbf{R}^{s}\times\mathbf{C})\times
\mathbf{R}^{s}%
\]
induces a bijection onto the quotient by $\mathcal{O}_{K}\rtimes
(\mathcal{O}_{K}^{\times,+})^{2}$.\newline\textit{(Surjectivity)} We have
already observed that the downward arrow in Diagram \ref{lq1} is equivariant.
By Dirichlet's Unit Theorem the $s$ different vectors%
\[
B_{i}^{\prime}:=(\log\left\vert \sigma_{1}(\varepsilon_{i})\right\vert
,\ldots,\log\left\vert \sigma_{s}(\varepsilon_{i})\right\vert ,\log\left\vert
\sigma_{s+1}(\varepsilon_{i})\right\vert )^{t}%
\]
for $i=1,\ldots,s$ give a full rank lattice in the \textquotedblleft
log-norm\textquotedblright\ hyperplane%
\[
H:=\left\{  (v_{1},\ldots,v_{s+1})\mid v_{1}+\cdots+v_{s}+2v_{s+1}=0\right\}
\subseteq\mathbf{R}^{s+1}\text{.}%
\]
Thus, there is a linear isomorphism%
\begin{align*}
\mathbf{R}^{s}  &  \rightarrow H\\
(v_{1},\ldots,v_{s})  &  \mapsto(v_{1},\ldots,v_{s},-\frac{1}{2}(v_{1}%
+\cdots+v_{s}))\\
(v_{1},\ldots,v_{s})  &  \leftarrowtail(v_{1},\ldots,v_{s},v_{s+1})\text{,}%
\end{align*}
where $\mathbf{R}^{s}$ is understood to refer to the base in Diagram
\ref{lq1}. It follows that the $s$ vectors $B_{i}:=(\log\left\vert \sigma
_{1}(\varepsilon_{i})\right\vert ,\ldots,\log\left\vert \sigma_{s}%
(\varepsilon_{i})\right\vert )^{t}$, i.e. just the image of the $B_{i}%
^{\prime}$ under this isomorphism, span a full rank lattice in $\mathbf{R}%
^{s}$. Hence, since the $B_{i}$ are thus an $\mathbf{R}$-vector space basis,
each element in $\mathbf{R}^{s}$ has a unique presentation as%
\[
\sum_{i=1}^{s}(n_{i}+\beta_{i})B_{i}\qquad\text{with}\qquad n_{i}\in
\mathbf{Z}\text{, }0\leq\beta_{i}<1\text{.}%
\]
Thus, letting $\alpha:=\varepsilon_{1}^{-n_{1}}\cdots\varepsilon_{s}^{-n_{s}%
}\in(\mathcal{O}_{K}^{\times,+})^{2}\subset\mathcal{O}_{K}\rtimes
(\mathcal{O}_{K}^{\times,+})^{2}$ act, we obtain an element of the orbit which
lies in our fundamental domain $\Lambda$ for the base. Obviously, since our
map is equivariant, we can let the same uniquely determined element act on the
entire space. Thus, we have found a representative of our element in
$(\mathbf{R}^{s}\times\mathbf{C})\times\Lambda$. Next, the translation action
of $\mathcal{O}_{K}$ leaves the base invariant and just acts in the fibers of
Equation \ref{l3}. We get a unique presentation%
\[
\sum_{i=1}^{s+2}(m_{i}+\alpha_{i})\tilde{A}_{i}\qquad\text{with}\qquad
m_{i}\in\mathbf{Z}\text{, }0\leq\alpha_{i}<1\text{,}%
\]
thus, letting $\beta:=\sum m_{i}b_{i}$ act for $\mathcal{O}_{K}=\mathbf{Z}%
\left\langle a_{1},\ldots,a_{s+2}\right\rangle $, we get a unique
representative in $\Phi(r_{1},\ldots,r_{s})\times\{(r_{1},\ldots,r_{s}%
)\}\in\mathcal{F}und$, as desired. Note that the group elements we acted by
were canonically determined, so we actually get a well-defined map%
\[
(\mathbf{R}^{s}\times\mathbf{C})\times\mathbf{R}^{s}\longrightarrow
\mathcal{F}und\text{.}%
\]
\newline\textit{(Injectivity)} Suppose $x,y\in\mathcal{F}und$ lie in the same
orbit of the action of $\mathcal{O}_{K}\rtimes(\mathcal{O}_{K}^{\times,+}%
)^{2}$. By the equivariance of the morphism in Diagram \ref{lq1} it follows
that their images in $\mathbf{R}^{s}$ lie in the same orbit of the action of
$(\mathcal{O}_{K}^{\times,+})^{2}$ on $\mathbf{R}^{s}$. But since
$x,y\in\mathcal{F}und$, their images $\overline{x},\overline{y}\in
\mathbf{R}^{s}$ lie in $\Lambda$, and since this was a fundamental domain for
$(\mathcal{O}_{K}^{\times,+})^{2}$ we must have $\overline{x}=\overline{y}$.
But then $x,y$ lie in the same fiber $\Phi(r_{1},\ldots,r_{s})$. We check that
$\mathcal{O}_{K}\subseteq\mathcal{O}_{K}\rtimes(\mathcal{O}_{K}^{\times
,+})^{2}$ is the largest subgroup stabilizing a fiber, which implies that
$x,y$ only differ by the translation action of $\mathcal{O}_{K}$ inside the
fiber. But $\Phi(r_{1},\ldots,r_{s})$ was constructed as a fundamental domain
for this action, so we deduce $x=y$.
\end{proof}

We may restate this in more conventional coordinates. Define (or recall) the
standard Minkowski fundamental domain%
\[
\Phi_{\operatorname*{Mink}}:=\left\{  \sum_{i=1}^{s+2}\alpha_{i}\tilde{A}%
_{i}\mid0\leq\alpha_{i}<1\right\}  \subseteq\mathbf{R}^{s}\times\mathbf{C}%
\]
with $\tilde{A}_{i}^{\ast}:=(\sigma_{1}(a_{i}),\ldots,\sigma_{s}(a_{i}%
),\sigma_{s+1}(a_{i}))^{t}$. Just from a change of coordinates Prop.
\ref{prop_fund_domain} can equivalently be reformulated as follows:

\begin{corollary}
\label{cor_fund_domain_in_xy_coords}The set $\Lambda\times\Phi
_{\operatorname*{Mink}}=$%
\[
\left\{  (x_{1},y_{1},\ldots,x_{s},y_{s},x_{s+1}+iy_{s+1})\left\vert
\begin{array}
[c]{l}%
\frac{1}{2}\log y_{1},\ldots,\frac{1}{2}\log y_{s}\in\Lambda\\
x_{1},\ldots,x_{s},x_{s+1}+iy_{s+1}\in\Phi_{\operatorname*{Mink}}%
\end{array}
\right.  \right\}
\]
is the same fundamental domain, but in $(x_{i},y_{i})$-coordinates.
\end{corollary}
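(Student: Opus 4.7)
The plan is a direct change of variables, with essentially no conceptual content beyond unwinding the Iwasawa parametrization of Equation \ref{ltz1}. Recall that on each upper half-plane factor the $(r_i,b_i)$-coordinates and the $(x_i,y_i)$-coordinates are related by $y_i = e^{2 r_i}$ and $x_i = b_i\, e^{r_i}$ (for $i=1,\ldots,s$), while on the $\mathbf{C}$-factor we simply identify $b_{s+1} = x_{s+1}+ i y_{s+1}$. Under this substitution the base condition $(r_1,\ldots,r_s)\in\Lambda$ becomes $(\tfrac12\log y_1,\ldots,\tfrac12\log y_s)\in\Lambda$, which is the first condition in the displayed set.

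For the fiber, I would expand the defining condition $b\in\Phi(r_1,\ldots,r_s)$ from Equation \ref{l5}. Writing
\[
b_j = \sum_{i=1}^{s+2} \alpha_i\, e^{-r_j}\sigma_j(a_i)\ (j\le s),\qquad
b_{s+1} = \sum_{i=1}^{s+2} \alpha_i\, \sigma_{s+1}(a_i),
\]
with $0\le\alpha_i<1$, and then applying $x_j=b_j e^{r_j}$ for $j\le s$, the $e^{-r_j}$ and $e^{r_j}$ cancel. So we obtain
\[
x_j = \sum_{i=1}^{s+2} \alpha_i\, \sigma_j(a_i)\quad(j\le s),\qquad
x_{s+1}+iy_{s+1}=\sum_{i=1}^{s+2}\alpha_i\, \sigma_{s+1}(a_i),
\]
which is exactly a point of $\Phi_{\operatorname{Mink}}$ with the same coefficients $\alpha_i$. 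Thus the $r$-dependent Minkowski lattice $\Phi(r_1,\ldots,r_s)$ becomes, in $(x_i,y_i)$-coordinates, the \emph{constant} Minkowski fundamental domain $\Phi_{\operatorname{Mink}}$.

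Combining the two steps, and noting that the change of coordinates is a diffeomorphism $\mathbf{R}^s\times\mathbf{R}^s\times\mathbf{C}\xrightarrow{\sim}\mathbf{R}^s_{>0}\times\mathbf{R}^s\times\mathbf{C}$ identifying $\mathbf{H}^s\times\mathbf{C}$ with itself, the set $\mathcal{F}und$ of Proposition \ref{prop_fund_domain} is carried bijectively onto $\Lambda\times\Phi_{\operatorname{Mink}}$. Since Proposition \ref{prop_fund_domain} is already a fundamental domain in $(r_i,b_i)$-coordinates, the corollary follows. There is no real obstacle: the whole content is the observation that the awkward factors $e^{-r_j}$ appearing in $\tilde{A}_i$ are precisely designed to be absorbed by the coordinate change $x_j=b_j e^{r_j}$, leaving the clean Minkowski basis $\tilde{A}_i^\ast$.
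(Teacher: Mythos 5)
Your proof is correct and is precisely the computation the paper leaves implicit: the Corollary is stated in the paper as an immediate change of coordinates from Proposition \ref{prop_fund_domain}, and your unwinding of Equation \ref{ltz1} (namely $y_j=e^{2r_j}$, $x_j=b_j e^{r_j}$, so that the factors $e^{-r_j}$ in $\tilde{A}_i$ cancel and the $r$-dependent fibers $\Phi(r_1,\ldots,r_s)$ all become the constant domain $\Phi_{\operatorname{Mink}}$) is exactly the intended argument. Since the group action is unchanged and only the description of the subset changes under the diffeomorphism, the fundamental domain property transfers trivially, as you note.
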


\section{The volume computation}

\begin{proof}
[Proof of Prop. \ref{prop_main}]Let us compute the volume of
$X:=X(K;\mathcal{O}_{K}^{\times,+})$. For this we will integrate its canonical
volume form $\widetilde{vol}$ on $X$, which is best done by integrating it
over our fundamental domain of Cor. \ref{cor_fund_domain_in_xy_coords}. We
compute%
\begin{align*}
&  \int_{X}\widetilde{vol}=\frac{1}{2^{s}}\int_{(\mathbf{H}^{s}\times
\mathbf{C})/(\mathcal{O}_{K}\rtimes(\mathcal{O}_{K}^{\times,+})^{2}%
)}\widetilde{vol}=\frac{1}{2^{s}}\int_{\Lambda\times\Phi_{\operatorname*{Mink}%
}}\widetilde{vol}\\
&  =\frac{1}{2^{s}}\int_{\Lambda\times\Phi_{\operatorname*{Mink}}}\frac
{(s+1)}{2^{2s+s^{2}-1}}\frac{1}{y_{1}\cdots y_{s}}\mathrm{d}x_{1}%
\wedge\mathrm{d}y_{1}\wedge\cdots\wedge\mathrm{d}x_{s+1}\wedge\mathrm{d}%
y_{s+1}\text{.}%
\end{align*}
Switching to $r$-coordinates, i.e. substituting $y_{i}=e^{2r_{i}}$ for
$i=1,\ldots,s$, this effectively reduces to computing an Euclidean volume,
namely%
\begin{align*}
&  =\frac{1}{2^{s}}\frac{(s+1)}{2^{2s+s^{2}-1}}\int_{\Lambda\times
\Phi_{\operatorname*{Mink}}}\mathrm{d}x_{1}\wedge\mathrm{d}r_{1}\wedge
\cdots\wedge\mathrm{d}x_{s}\wedge\mathrm{d}r_{s}\wedge\mathrm{d}x_{s+1}%
\wedge\mathrm{d}y_{s+1}\\
\qquad &  =\frac{1}{2^{s}}\frac{(s+1)}{2^{2s+s^{2}-1}}\left(  \int_{\Lambda
}\mathrm{d}x_{1}\wedge\cdots\wedge\mathrm{d}x_{s}\wedge\mathrm{d}x_{s+1}%
\wedge\mathrm{d}y_{s+1}\right)  \left(  \int_{\Phi_{\operatorname*{Mink}}%
}\mathrm{d}r_{1}\wedge\cdots\wedge\mathrm{d}r_{s}\right) \\
&  =\frac{1}{2^{s}}\frac{(s+1)}{2^{2s+s^{2}-1}}\cdot\det\left(  \tilde{A}%
_{1}^{\ast},\ldots,\tilde{A}_{s+2}^{\ast}\right)  \cdot\det(B_{1},\ldots
,B_{s})\text{.}%
\end{align*}

Now we can use the classical fact that the vectors $\tilde{A}_{1}^{\ast
},\ldots,\tilde{A}_{s+2}^{\ast}$, which are generating the Minkowski
fundamental domain, span a parallelepiped of Euclidean volume $2^{-t}%
\cdot\sqrt{\left\vert \triangle_{K/\mathbf{Q}}\right\vert }$ with $t$ the
number of complex embeddings. Moreover, the determinant of the vectors
$B_{1},\ldots,B_{s}$ is almost literally the definition of the Dirichlet
regulator:%
\begin{align*}
&  =\frac{1}{2^{s}}\frac{(s+1)}{2^{2s+s^{2}-1}}\cdot\left(  \frac{1}{2}%
\cdot\sqrt{\left\vert \triangle_{K/\mathbf{Q}}\right\vert }\right)
\cdot(2^{s}\cdot R_{K})\\
&  =\frac{(s+1)}{2^{2s+s^{2}}}\cdot\sqrt{\left\vert \triangle_{K/\mathbf{Q}%
}\right\vert }\cdot R_{K}\text{.}%
\end{align*}
The factor $2^{s}$ in front of the regulator $R_{K}$ occurs as follows: We
would get precisely the regulator here if $\varepsilon_{1},\ldots
,\varepsilon_{s}$ was a basis for $\mathcal{O}_{K}^{\times}$. However, our
$\varepsilon_{1},\ldots,\varepsilon_{s}$ are a basis for $(\mathcal{O}%
_{K}^{\times,+})^{2}$. We recall the analytic class number formula, stating
that (in the case we consider)%
\[
\operatorname*{res}\nolimits_{s=1}\zeta_{K}(s)=\frac{2^{s}\pi h_{K}R_{K}%
}{\sqrt{\left\vert \triangle_{K/\mathbf{Q}}\right\vert }}\text{.}%
\]
Solving for $R_{K}$ yields the claim by plugging it into our previous formula
for the volume. We get Equation \ref{lcy3} as desired.
\end{proof}

One can actually `speed up' this computation slightly by working directly with
a fundamental domain under the action of the full group $\mathcal{O}%
_{K}\rtimes\mathcal{O}_{K}^{\times,+}$, leading the two mutually cancelling
factors $\frac{1}{2^{s}}$ and $2^{s}$ to disappear altogether.

\begin{example}
Consider the cubic field%
\begin{equation}
K:=\mathbf{Q}[T]/(T^{3}+T^{2}-1)\text{.}\label{l35}%
\end{equation}
The image $\overline{T}\in K$ is actually a generator of $\mathcal{O}%
_{K}^{\times,+}$ because its norm is one and its single real embedding has
value $0.7548\ldots>0$. The number field $K$ has discriminant $\triangle
_{K/\mathbf{Q}}=-23$, class number $h_{K}=1$ and regulator%
\[
R_{K}=\left\vert \log0.754877\ldots\right\vert =0.28119957432...
\]
It has one real and one complex place. We may form its Oeljeklaus-Toma
manifold%
\[
X:=X(K;\mathcal{O}_{K}^{\times,+})\text{,}%
\]
giving a (non-K\"{a}hler) Inoue surface of type $\mathrm{S}^{0}$ with
Tricerri's metric. According to Prop. \ref{prop_main} its volume is%
\[
\operatorname*{Vol}\left(  X\right)  =\frac{1}{4}\cdot\sqrt{23}\cdot
0.28119957432...\approx0.3371\ldots\text{.}%
\]
We will show in Prop. \ref{prop_intext_minvol} that no smaller volume is
possible among cubic fields. The commutator subgroup of its fundamental group
is (by Prop. \ref{Prop_StructureOfCommutator})%
\[
\lbrack\pi,\pi]=J(\mathcal{O}_{K}^{\times,+})=(1-\overline{T})\text{,}%
\]
the ideal in $\mathcal{O}_{K}$ generated by $1-\overline{T}$. From the minimal
polynomial, Equation \ref{l35}, we see that $\overline{T}^{3}+\overline{T}%
^{2}=1$ and a simple polynomial division reveals that $(1-\overline{T}%
)\cdot(\overline{T}^{2}+2\overline{T}+2)=1$, showing that $J(\mathcal{O}%
_{K}^{\times,+})=(1)$ is actually the entire ring of integers. So the maximal
abelian quotient $\pi_{\operatorname*{ab}}\simeq\mathbf{Z}$ is already
torsion-free itself. By Prop. \ref{prop_TorsionInH1} we therefore have%
\[
H_{1}(X,\mathbf{Z})=\mathbf{Z}\text{.}%
\]
Following the recipe of Prop. \ref{prop_reconstruct} we let a generator act on
$\mathbf{Z}^{3}$ and this will be $\overline{T}$ or $\overline{T}^{-1}$. We
have no way of distinguishing them if we are just given $\mathbf{Z}$
abstractly. Say it was $\overline{T}$, and we get precisely that its action on
$\mathbf{Z}^{3}$ has minimal polynomial $x^{3}+x^{2}-1$ in $\mathbf{Z}[x]$,
generating $K$ over $\mathbf{Q}$. Adjoining all three complex roots yields
$K^{\operatorname*{g}}/\mathbf{Q}$, a field of degree $6$.
\end{example}

\section{Prescribed torsion}

We want to exhibit a particularly nice family of Inoue surfaces for which we
can freely prescribe the order of the torsion in $H_{1}(X,\mathbf{Z})$. As
will be clear from the proof, this construction largely rests on ideas of
Ishida, porting from number theory to geometry.

\begin{proposition}
\label{prop_ConstructInoueSurfaceWithTorsionZm}For any given $m\geq1$ there
exists an Inoue surface $X$ of type $\mathrm{S}^{0}$ with%
\[
H_{1}(X,\mathbf{Z})\cong\mathbf{Z}\oplus\mathbf{Z}/m
\]
and equipped with the Oeljeklaus-Toma metric, it has volume%
\[
\operatorname*{Vol}(X)=\frac{1}{4}\cdot\sqrt{4m^{3}+27}\cdot\log\left\vert
z-\frac{m}{3z}\right\vert
\]
for the real number%
\[
z:=\sqrt[3]{\frac{1}{2}+\frac{\sqrt{3}}{18}\sqrt{4m^{3}+27}}\text{.}%
\]
In fact, $X$ can be constructed as a finite unramified covering%
\begin{equation}%
%TCIMACRO{\TeXButton{TeX field}{\xymatrix{
%X \ar[d] \\
%X(K;\mathcal{O}_{K}^{\times,+})
%}} }%
%BeginExpansion
\xymatrix{
X \ar[d] \\
X(K;\mathcal{O}_{K}^{\times,+})
}
%EndExpansion
\label{lja21}%
\end{equation}
of the Oeljeklaus-Toma manifold%
\begin{equation}
X:=X(K;\mathcal{O}_{K}^{\times,+})\qquad\text{for}\qquad K:=\mathbf{Q}%
[T]/(T^{3}+mT-1)\text{.} \label{lja20}%
\end{equation}
If $4m^{3}+27$ is square-free, this covering is trivial. Alternatively,
suppose $m=3k$ and $4k^{3}+1$ is square-free: Then if $3\nmid k$, the covering
is also trivial. If $3\mid k$, it is a covering of degree $3$.
\end{proposition}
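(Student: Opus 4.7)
The plan is to take $K:=\mathbf{Q}[T]/(T^{3}+mT-1)$, observe that the image $\overline{T}$ lies in $\mathcal{O}_{K}^{\times,+}$, and set $X:=X(K;U)$ for the cyclic subgroup $U:=\langle\overline{T}\rangle\subseteq\mathcal{O}_{K}^{\times,+}$. Irreducibility of $T^{3}+mT-1$ is clear (the candidates $\pm 1$ give the values $m$ and $-m-2$), the derivative $3T^{2}+m$ is strictly positive, and the polynomial has opposite signs at $0$ and $1$; so $K$ has one real and one complex place, and the unique real root $t_{0}$ lies in $(0,1)$. Its discriminant is $-(4m^{3}+27)<0$, consistent with $s=t=1$. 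Since the constant term of the minimal polynomial is $\pm 1$, we have $\overline{T}\in\mathcal{O}_{K}^{\times}$, and $\sigma_{1}(\overline{T})=t_{0}>0$ makes it totally positive. By Dirichlet's Unit Theorem $\mathcal{O}_{K}^{\times,+}\simeq\mathbf{Z}$, so $U$ has finite index and is admissible; $X:=X(K;U)$ is then a finite unramified cover of $X(K;\mathcal{O}_{K}^{\times,+})$ of degree $[\mathcal{O}_{K}^{\times,+}:\langle\overline{T}\rangle]$, as in Diagram \ref{lja21}.

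Next I would apply Proposition \ref{Prop_KappaAgreesWithOK} to obtain the short exact sequence
\begin{equation*}
0\longrightarrow\mathcal{O}_{K}/(1-\overline{T})\longrightarrow H_{1}(X,\mathbf{Z})\longrightarrow U\simeq\mathbf{Z}\longrightarrow 0\text{,}
\end{equation*}
which splits because $U$ is free. The norm is $N(1-\overline{T})=P(1)=m$ with $P(T)=T^{3}+mT-1$ the minimal polynomial, so $\mathcal{O}_{K}/(1-\overline{T})$ has order $m$. For cyclicity, the canonical map $\mathbf{Z}[\overline{T}]/(1-\overline{T})\to\mathcal{O}_{K}/(1-\overline{T})$ has source $\mathbf{Z}[T]/(T^{3}+mT-1,T-1)\simeq\mathbf{Z}/m$, so once it is shown to be surjective a cardinality count forces an isomorphism. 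Surjectivity is immediate when $\mathcal{O}_{K}=\mathbf{Z}[\overline{T}]$, which happens whenever $4m^{3}+27$ is square-free (the polynomial discriminant then agrees with $\triangle_{K/\mathbf{Q}}$); in the remaining cases covered by the proposition it follows from Ishida's explicit description of $\mathcal{O}_{K}$.

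The volume comes from an adaptation of Proposition \ref{prop_main}: the fundamental-domain construction of Proposition \ref{prop_fund_domain} and the subsequent integration go through verbatim if the generator of $U$ is used in place of a generator of $\mathcal{O}_{K}^{\times,+}$, with the Dirichlet regulator replaced by the log-covolume of $U$, i.e. $\left\vert \log\left\vert \sigma_{1}(\overline{T})\right\vert \right\vert =\left\vert \log t_{0}\right\vert$. Substituting $s=t=1$ and $|\triangle_{K/\mathbf{Q}}|=4m^{3}+27$ into the prefactor $\tfrac{s+1}{4^{s}\cdot 2^{s^{2}}}=\tfrac{1}{4}$ yields $\operatorname{Vol}(X)=\tfrac{1}{4}\sqrt{4m^{3}+27}\cdot\left\vert \log t_{0}\right\vert$; Cardano's formula applied to $T^{3}+mT-1$ then identifies $t_{0}$ with $z-m/(3z)$ for the $z$ in the statement. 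Finally, the covering degree $[\mathcal{O}_{K}^{\times,+}:\langle\overline{T}\rangle]$ is read off from Ishida-type theorems on cubic units: $\overline{T}$ is a fundamental unit whenever $4m^{3}+27$ is square-free; when $m=3k$ with $4k^{3}+1$ square-free, $\overline{T}$ is still fundamental if $3\nmid k$, while if $3\mid k$ there exists $\eta\in\mathcal{O}_{K}^{\times,+}$ with $\eta^{3}=\overline{T}$, giving the degree-$3$ covering.

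The hardest step is this last one, which also subsumes the cyclicity check: both reduce to pinning down $\mathcal{O}_{K}$ as an overring of $\mathbf{Z}[\overline{T}]$ and locating the fundamental unit inside it. This is genuinely number-theoretic and relies on Ishida's analysis of the $3$-adic pathology visible in $\operatorname{disc}(T^{3}+3kT-1)=-27(4k^{3}+1)$, which is then transported to geometry by functoriality of the Oeljeklaus--Toma construction in $U$.
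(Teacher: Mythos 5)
There is a genuine gap, and it sits exactly at the point where your construction deviates from the paper's. You set $X:=X(K;U)=(\mathbf{H}\times\mathbf{C})/(\mathcal{O}_{K}\rtimes U)$, keeping the full ring of integers and only shrinking the unit group, whereas the paper quotients by $\mathbf{Z}[\overline{T}]\rtimes U$, shrinking the \emph{additive} lattice to the order $\mathbf{Z}[\overline{T}]$ as well. With your choice the torsion of $H_{1}$ is $\mathcal{O}_{K}/(1-\overline{T})\mathcal{O}_{K}$, which indeed has order $\left\vert N(1-\overline{T})\right\vert =m$ but need not be cyclic, and your appeal to ``Ishida's explicit description of $\mathcal{O}_{K}$'' for surjectivity of $\mathbf{Z}[\overline{T}]/(1-\overline{T})\rightarrow\mathcal{O}_{K}/(1-\overline{T})$ fails precisely in the case $3\mid k$: a cyclic group of order $m$ cannot surject onto a non-cyclic group of order $m$. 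Concretely, take $m=9$ (so $k=3$, $4k^{3}+1=109$ square-free). Ishida's element $\theta:=\frac{1}{3}(1+\overline{T}+\overline{T}^{2})$ is integral; writing $S:=1-\overline{T}$ (so $S^{3}-3S^{2}+12S-9=0$ and $N(S)=9$) one finds $S^{2}/3=\theta-1+S\in\mathcal{O}_{K}$ with $N(S^{2}/3)=3$, hence $(S)^{2}=(3)\cdot(S^{2}/3)$ as ideals. Since $v_{3}(\triangle_{K/\mathbf{Q}})=1$ forces the tame splitting $3\mathcal{O}_{K}=\mathfrak{p}^{2}\mathfrak{q}$, parity of exponents in $(S)^{2}=\mathfrak{p}^{2}\mathfrak{q}\cdot(S^{2}/3)$ forces $(S^{2}/3)=\mathfrak{q}$ and $(S)=\mathfrak{p}\mathfrak{q}$, so $\mathcal{O}_{K}/(1-\overline{T})\cong\mathbf{F}_{3}\times\mathbf{F}_{3}$, not $\mathbf{Z}/9$. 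Your volume step has the same root cause: you substitute $\left\vert \triangle_{K/\mathbf{Q}}\right\vert =4m^{3}+27$, conflating the field discriminant with the polynomial discriminant; these differ by $[\mathcal{O}_{K}:\mathbf{Z}[\overline{T}]]^{2}$, so your formula is off by that index whenever it exceeds $1$. For $m=8$ one has $[\mathcal{O}_{K}:\mathbf{Z}[\overline{T}]]=5$ and $\left\vert \triangle_{K/\mathbf{Q}}\right\vert =83$, so your $X(K;U)$ has volume $\frac{1}{4}\sqrt{83}\left\vert \log t_{0}\right\vert$, not the claimed $\frac{1}{4}\sqrt{2075}\left\vert \log t_{0}\right\vert$. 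The paper's order-level quotient repairs both defects simultaneously and for \emph{every} $m$: the torsion is $\mathbf{Z}[\overline{T}]/(\overline{T}-1)\cong\mathbf{Z}/m$ by construction, and the fiber lattice has covolume $\frac{1}{2}\sqrt{4m^{3}+27}$ on the nose; in particular your argument does not even establish the existence claim for those $m$ with $\mathbf{Z}[\overline{T}]\neq\mathcal{O}_{K}$.

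Your identification of the covering degree is also misattributed. You claim that for $3\mid k$ there exists $\eta\in\mathcal{O}_{K}^{\times,+}$ with $\eta^{3}=\overline{T}$, so that $[\mathcal{O}_{K}^{\times,+}:\langle\overline{T}\rangle]=3$; but this contradicts the very theorem of Ishida being invoked, which gives $\langle\overline{T}\rangle=\mathcal{O}_{K}^{\times,+}$ whenever $m=3k$ and $4k^{3}+1$ is square-free, \emph{regardless} of whether $3\mid k$. The degree-$3$ covering in the proposition is additive, not multiplicative: it is the index $[\mathcal{O}_{K}:\mathbf{Z}[\overline{T}]]$, which is a power of $3$ by the discriminant-index formula applied to $\operatorname*{disc}(T^{3}+3kT-1)=-27(4k^{3}+1)$, is prime to $3$ when $3\nmid k$ via the Eisenstein shift $T\mapsto T+1$, and equals exactly $3$ when $3\mid k$ because of the integrality of $\frac{1}{3}(1+\overline{T}+\overline{T}^{2})$. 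The parts of your write-up that do go through (irreducibility, $\overline{T}\in\mathcal{O}_{K}^{\times,+}$, $N(1-\overline{T})=P(1)=m$, the split exact sequence from Prop. \ref{Prop_KappaAgreesWithOK}, and the Cardano evaluation of the real root) match the paper, but the core difficulty the paper resolves by working with $\mathbf{Z}[\overline{T}]\rtimes U$ is exactly the step your proposal gets wrong.
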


I suspect that all $H_{1}(X,\mathbf{Z})\cong\mathbf{Z}\oplus\mathbf{Z}/m$ can
be realized by genuine Oeljeklaus-Toma manifolds without the need to allow
finite coverings.

\begin{proof}
Let $m\geq1$ be given. The polynomial $T^{3}+mT-1$ has one sign change in its
coefficients, so by Descartes Sign Rule it has a single positive real root and
no negative real roots. Moreover, it is irreducible over $\mathbf{Q} $
(\textit{Proof:} Otherwise it has a rational root $\alpha_{1}$. Hence, over
the algebraic closure it factors as $(T-\alpha_{1})(T-\alpha_{2})(T-\alpha
_{3})$ with $\alpha_{1}\in\mathbf{Q}\cap\overline{\mathbf{Z}}=\mathbf{Z}$ and
$\alpha_{2},\alpha_{3}\in\overline{\mathbf{Z}}$. Since $\alpha_{1}\alpha
_{2}\alpha_{3}=1$ it follows that $\alpha_{1}$ is also a unit, so $\alpha
_{1}=1$ since we already know that there is no negative real root. But by
plugging in we see that this is certainly not a root). It follows that%
\[
K:=\mathbf{Q}[T]/(T^{3}+mT-1)
\]
is a cubic number field with $s=t=1$. We write $\overline{T}$ to denote the
image of $T$ in $K$. Since the constant coefficient in the minimal polynomial
of $\overline{T}$ is $-1$, it is a unit in $\mathcal{O}_{K}^{\times}$ and we
had already seen that its single real embedding is necessarily positive. Thus,
$\overline{T}\in\mathcal{O}_{K}^{\times,+}$ and it generates a subgroup
$U:=\mathbf{Z}\left\langle \overline{T}\right\rangle $ of finite index.
Similarly, instead of the full ring of integers we so far just understand
$\mathbf{Z}[\overline{T}]\subseteq\mathcal{O}_{K}$, which might be of some
finite index, too.\newline This elementary construction already allows us to
construct $X$: We consider the complex manifold $X$, defined by
\begin{equation}%
\begin{array}
[c]{cl}%
\dfrac{\mathbf{H}\times\mathbf{C}}{\mathbf{Z}[\overline{T}]\rtimes U} & =X\\
\downarrow & \\
\dfrac{\mathbf{H}\times\mathbf{C}}{\mathcal{O}_{K}\rtimes\mathcal{O}%
_{K}^{\times,+}} & =X(K,\mathcal{O}_{K}^{\times,+})
\end{array}
\label{lja10}%
\end{equation}
and equip it with the Oeljeklaus-Toma metric, which is of course also
invariant under the action since $\mathbf{Z}[\overline{T}]\rtimes U$ forms
some finite index subgroup of $\mathcal{O}_{K}\rtimes U$. In particular, $X$
is compact as well. It clearly is an Inoue surface. The definition of the
ideal $J$ (Definition \ref{def_IdealJ}) also makes sense in the subring
$\mathbf{Z}[\overline{T}]\subset\mathcal{O}_{K}$ and we compute%
\begin{align}
J(U)  &  =\mathbf{Z}[\overline{T}]/(\overline{T}-1)\label{lja9}\\
&  =\mathbf{Z}/[T]/(T-1,T^{3}+mT-1)=\mathbf{Z}/m\text{.}\nonumber
\end{align}
We leave it to the reader to check that Prop. \ref{Prop_KappaAgreesWithOK} can
be generalized to the manifold $X$ and gives us $H_{1}(X,\mathbf{Z}%
)\cong\mathbf{Z}\oplus\mathbf{Z}/m$. In fact, the proof carries over verbatim.
Finally, we can compute its volume as follows: Instead of the discriminant
$\triangle_{K/\mathbf{Q}}$ of the number field $K$, we now just get the
discriminant of the order $\mathbf{Z}[\overline{T}]\subset\mathcal{O}_{K}$,
but this makes things easier since that is just the discriminant of the
generating polynomial, i.e. $-4m^{3}-27$. The regulator matrix for $K$ is the
$\left(  1\times1\right)  $-matrix with the single entry $\log\left\vert
\sigma_{1}(\overline{T})\right\vert $, where $\sigma_{1}(\overline{T})$
denotes the single real embedding of $\overline{T}$, or equivalently the
single real root of $T^{3}+mT-1$. We may solve this using the classical Vieta
substitution $t$ for depressed cubics (a variant to the Cardano-Tartaglia
formula): The real root is given by%
\[
t:=z-\frac{m}{3z}\qquad\text{for}\qquad z:=\sqrt[3]{\frac{1}{2}+\frac{\sqrt
{3}}{18}\sqrt{4m^{3}+27}}\text{.}%
\]
This formula is `fairly' simple since in the polynomial $T^{3}+mT-1$ the
quadratic term is already eliminated.\newline The rest of the proof, and the
only difficult part, exclusively concerns the question to control the index of%
\[
\mathbf{Z}[\overline{T}]\rtimes U\subseteq\mathcal{O}_{K}\rtimes
\mathcal{O}_{K}^{\times,+}%
\]
in order to understand the degree of the covering. For the discriminant of the
order $\mathbf{Z}[\overline{T}]\subseteq\mathcal{O}_{K}$ we compute
$\operatorname*{disc}(T^{3}+mT-1)=-4m^{3}-27$ and therefore%
\begin{equation}
-4m^{3}-27=\triangle_{K/\mathbf{Q}}\cdot\lbrack\mathcal{O}_{K}:\mathbf{Z}%
[\overline{T}]]^{2} \label{lja11}%
\end{equation}
by the discriminant-index formula. Hence, if $4m^{3}+27$ is square-free, we
must have $[\mathcal{O}_{K}:\mathbf{Z}[\overline{T}]]=1$ and therefore
$\mathbf{Z}[\overline{T}]=\mathcal{O}_{K}$. Next, we use a clever theorem of
Ishida telling us that this also implies that $\mathcal{O}_{K}^{\times}$ is
generated by $\overline{T}$, namely \cite[Theorem 1]{MR0335469} (strictly
speaking, Ishida's theorem only applies for $m\geq2$, so we ask the reader to
deal with the single case $m=1$ either by using a computer $-$ or by hand. The
latter can be done by checking that the norm equation $N(-)=+1$ cannot have a
real solution of smaller absolute value). The fundamental unit $\overline{T}$
must moreover be totally positive since it was chosen from a polynomial which
did not have negative real roots. Thus,%
\begin{equation}
\mathbf{Z}[\overline{T}]\rtimes U=\mathcal{O}_{K}\rtimes\mathcal{O}%
_{K}^{\times,+} \label{lja17}%
\end{equation}
and the manifold $X$ of Equation \ref{lja10} becomes literally a genuine
Oeljeklaus-Toma manifold.\newline Let us deal with the remaining case: $m=3k$
and $4k^{3}+1$ is square-free. The same theorem of Ishida \cite[Theorem
1]{MR0335469} tells us that this also suffices to have $U=\mathcal{O}%
_{K}^{\times,+}$. However, $[\mathcal{O}_{K}:\mathbf{Z}[\overline{T}]]$ can be
larger than one. Actually, the paper of Ishida gives us also all the tools we
need to deal with this problem, but Ishida does not summarize his findings in
this case as a separate theorem, so let me guide you through his argument: In
\cite[\S 3, all on page $248$]{MR0335469} he first deduces from the
discriminant-index formula, i.e. Equation \ref{lja11}, that%
\[
\lbrack\mathcal{O}_{K}:\mathbf{Z}[\overline{T}]]=3^{d}%
\]
for some $d\geq0$. In the case that $3\nmid k$, he uses that $\overline{T}$
and $\overline{T}+1$ clearly generate the same number field, but%
\[
(T+1)^{3}+m(T+1)-1=T^{3}+3T^{2}+(3k+3)T+3k
\]
is an Eisenstein polynomial at the prime $p=3$, which implies that
$3\nmid\lbrack\mathcal{O}_{K}:\mathbf{Z}[\overline{T}]]$. Thus, again
$\mathbf{Z}[\overline{T}]=\mathcal{O}_{K}$ and we are back in the situation of
Equation \ref{lja17}. It remains to deal with the case $3\mid k$, so
$3^{3}\mid m$. In this case Ishida exhibits the element%
\[
\frac{1}{3}(1+\overline{T}+\overline{T}^{2})\in\frac{1}{3}\mathbf{Z}%
[\overline{T}]\text{,}%
\]
which can be checked by direct computation to be integral, i.e. it lies in
$\mathcal{O}_{K}$. This forces $d\geq1$ and using the discriminant-index
formula once more, he concludes $[\mathcal{O}_{K}:\mathbf{Z}[\overline{T}]]=3
$. Thus, our covering is also of degree $3$.
\end{proof}

\begin{example}
With the help of the computer we can compute the index of $\mathbf{Z}%
[\overline{T}]$ inside $\mathcal{O}_{K}$, resp. $U$ inside $\mathcal{O}%
_{K}^{\times,+}$. Several of the cases below are of course fully explained by
the proposition above. However, not all of them, and in particular we see that
the covering of Equation \ref{lja21} can sometimes have fairly large degree:%
\[%
\begin{tabular}
[c]{c|c|cc}%
$m$ & $[\mathcal{O}_{K}:\mathbf{Z}[\overline{T}]]$ & $[\mathcal{O}_{K}%
^{\times,+}:\mathbf{Z}\left\langle \overline{T}\right\rangle ]$ &
\multicolumn{1}{|c}{$x^{2}\mid4m^{3}+27$}\\\hline
\multicolumn{1}{r|}{$8$} & \multicolumn{1}{|r|}{$5$} &
\multicolumn{1}{|r|}{$2$} & \multicolumn{1}{r}{$5^{2}$}\\
\multicolumn{1}{r|}{$16$} & \multicolumn{1}{|r|}{$1$} &
\multicolumn{1}{|r|}{$1$} & \multicolumn{1}{r}{}\\
\multicolumn{1}{r|}{$24$} & \multicolumn{1}{|r|}{$1$} &
\multicolumn{1}{|r|}{$1$} & \multicolumn{1}{r}{$3^{4}$}\\
\multicolumn{1}{r|}{$32$} & \multicolumn{1}{|r|}{$1$} &
\multicolumn{1}{|r|}{$1$} & \multicolumn{1}{r}{}\\
\multicolumn{1}{r|}{$40$} & \multicolumn{1}{|r|}{$1$} &
\multicolumn{1}{|r|}{$1$} & \multicolumn{1}{r}{}\\
\multicolumn{1}{r|}{$48$} & \multicolumn{1}{|r|}{$1$} &
\multicolumn{1}{|r|}{$1$} & \multicolumn{1}{r}{$3^{2}$}\\
\multicolumn{1}{r|}{$56$} & \multicolumn{1}{|r|}{$31$} &
\multicolumn{1}{|r|}{$2$} & \multicolumn{1}{r}{$31^{2}$}\\
\multicolumn{1}{r|}{$64$} & \multicolumn{1}{|r|}{$1$} &
\multicolumn{1}{|r|}{$1$} & \multicolumn{1}{r}{}\\
\multicolumn{1}{r|}{$72$} & \multicolumn{1}{|r|}{$3\cdot11$} &
\multicolumn{1}{|r|}{$2$} & \multicolumn{1}{r}{$3^{2}\cdot11^{2}$}%
\end{tabular}
\]
The rightmost column lists square factors. Among the first $500$ values of $m
$ we get $\mathcal{O}_{K}=\mathbf{Z}[\overline{T}]$ for $415$ of them. The
condition for $4m^{3}+27$ to be square-free gives a reasonable sufficient
condition to have $[\mathcal{O}_{K}:\mathbf{Z}[\overline{T}]]=1$, but is still
quite remote from a precise criterion. As I have learnt from Ishida's paper
\cite{MR0335469}, it was shown by the famous Erd\H{o}s that $4m^{3}+27$ is
square-free for infinitely many $m$.
\end{example}

\section{A curiosity}

As we had seen from \S \ref{section_CommutatorSubgroup} the structure of the
ideal $J(U)$ can be quite a non-trivial matter. Even though its concrete
structure seems fairly elusive from the outset, one can bound its index in
terms of the units of the underlying number field. Sadly, controlling their
size is similarly inaccessible. However, these two elusive bounds control each
other.\medskip

I only record the following estimate as a curiosity. Since I know of no way to
compute the volume of an Oeljeklaus-Toma manifold except from the arithmetic
invariants of the underlying number field, I would not know how to put the
following inequality into any computational use.

\begin{proposition}
Let $K$ be a number field with $s=t=1$. Then the torsion in the first homology
of the Oeljeklaus-Toma surface $X:=X(K;\mathcal{O}_{K}^{\times,+})$ can be
bounded in terms of the volume and discriminant. Specifically,%
\[
\#H_{1}(X,\mathbf{Z})_{\operatorname*{tor}}\leq3(z+z^{2})
\]
where%
\[
z:=\max(w,\sqrt{1/w})\qquad\text{and}\qquad w:=\exp\left(  4\frac
{\operatorname*{Vol}\left(  X\right)  }{\sqrt{\left\vert \triangle
_{K/\mathbf{Q}}\right\vert }}\right)  \text{.}%
\]

\end{proposition}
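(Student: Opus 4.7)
The plan is to reduce the torsion count to an explicit ideal norm, use the volume formula of Proposition~\ref{prop_main} to relate the embeddings of the fundamental unit to $w$, and conclude with a short algebraic estimate.

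\textbf{Setup.} By Proposition~\ref{prop_TorsionInH1}, $H_{1}(X,\mathbf{Z})_{\operatorname*{tor}} \cong \mathcal{O}_K/J(\mathcal{O}_K^{\times,+})$. Since $s=t=1$, Dirichlet's Unit Theorem gives $\mathcal{O}_K^{\times,+} \cong \mathbf{Z}$; let $\varepsilon$ be a totally positive generator. Lemma~\ref{Lemma_JUCanBeDefinedOnGenerators} then identifies $J(\mathcal{O}_K^{\times,+})=(1-\varepsilon)$ as a principal ideal, so $\#H_{1}(X,\mathbf{Z})_{\operatorname*{tor}} = |N_{K/\mathbf{Q}}(1-\varepsilon)|$. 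Write $\sigma_1$ for the real embedding, $\sigma_2,\overline{\sigma_2}$ for the complex conjugate pair, and set $a:=\sigma_1(\varepsilon)>0$. Because $\varepsilon$ is a unit with $\sigma_1(\varepsilon)>0$, the identity $N_{K/\mathbf{Q}}(\varepsilon)=a\,|\sigma_2(\varepsilon)|^2 = +1$ forces $|\sigma_2(\varepsilon)|=1/\sqrt{a}$, whence
\[
|N_{K/\mathbf{Q}}(1-\varepsilon)| \;=\; |1-a|\cdot|1-\sigma_2(\varepsilon)|^2 \;\leq\; |1-a|\,\bigl(1+\tfrac{1}{\sqrt{a}}\bigr)^{2}.
\]

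\textbf{Matching $a$ with $w$.} Plugging $s=1$ into Proposition~\ref{prop_main} yields $\operatorname*{Vol}(X) = \tfrac{1}{4}\sqrt{|\triangle_{K/\mathbf{Q}}|}\,R_K$, so $R_K = 4\operatorname*{Vol}(X)/\sqrt{|\triangle_{K/\mathbf{Q}}|}$ and $w=\exp(R_K)$. In the $s=t=1$ case the regulator is simply $R_K=|\log a|$: our totally positive generator $\varepsilon$ differs from a fundamental unit of $\mathcal{O}_K^{\times}$ only by a sign (as $\mathcal{O}_K^{\times,+}$ has index $2$ in $\mathcal{O}_K^{\times}$), and the sign washes out under $|\log|\cdot||$. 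Therefore $w=\max(a,1/a)\geq 1$, and since $\sqrt{1/w}\leq 1\leq w$ one has $z=\max(w,\sqrt{1/w})=w$. Separating the cases $a\geq 1$ and $a<1$, a direct check gives $|1-a|\leq z-1$ and $1/\sqrt{a}\leq\sqrt{z}$; substituting back produces
\[
|N_{K/\mathbf{Q}}(1-\varepsilon)| \;\leq\; (z-1)(1+\sqrt{z})^{2}.
\]

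\textbf{Endgame and obstacle.} It remains to verify $(z-1)(1+\sqrt{z})^{2} \leq 3(z+z^{2})$ for $z\geq 1$. Expanding and rearranging rewrites this as $2z^{2}+3z+1 \geq 2\sqrt{z}(z-1)$, which follows (with generous room to spare) from $2z^{2}\geq 2z^{3/2}\geq 2\sqrt{z}(z-1)$. The substantial slack is consistent with the author's own remark that the estimate is a mere curiosity. The only point genuinely requiring care is the identification $R_K=|\log a|$, where one must remember that $\varepsilon$ generates $\mathcal{O}_K^{\times,+}$ rather than $\mathcal{O}_K^{\times}$ itself; everything else is a direct calculation, and I do not foresee any substantial obstacle.
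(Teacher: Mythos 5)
Your proof is correct and follows essentially the same route as the paper's: reduce the torsion to the ideal norm $\left\vert N_{K/\mathbf{Q}}(1-\varepsilon)\right\vert$ via Proposition \ref{prop_TorsionInH1} and Lemma \ref{Lemma_JUCanBeDefinedOnGenerators}, use $N_{K/\mathbf{Q}}(\varepsilon)=1$ to get $\left\vert \sigma_{2}(\varepsilon)\right\vert =1/\sqrt{a}$, and invert the volume formula of Proposition \ref{prop_main} (with the correct index-two observation justifying $R_{K}=\left\vert \log a\right\vert$, hence $w=\max(a,1/a)=z$). The only divergence is the final estimate: the paper expands the norm in elementary symmetric functions and applies the triangle inequality to reach $3(z+z^{2})$ directly, whereas you bound the norm factorwise by $(z-1)(1+\sqrt{z})^{2}$ --- in fact a slightly sharper intermediate bound --- and then verify algebraically that it is at most $3(z+z^{2})$; both steps check out.
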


\begin{proof}
From Prop. \ref{Prop_KappaAgreesWithOK} we have the equality%
\[
\#H_{1}(X,\mathbf{Z})_{\operatorname*{tor}}=\#(\mathcal{O}_{K}/J(\mathcal{O}%
_{K}^{\times,+}))\text{.}%
\]
By Dirichlet's Unit Theorem $\mathcal{O}_{K}^{\times}\simeq\left\langle
-1\right\rangle \times\mathbf{Z}\left\langle u\right\rangle $ with $u$ a
fundamental unit. Without loss of generality we can assume that $u$ is totally
positive, otherwise replace $u$ by $-u$. Then $u$ is a generator of
$\mathcal{O}_{K}^{\times,+}$. By Lemma \ref{Lemma_JUCanBeDefinedOnGenerators}
we therefore have%
\[
J(\mathcal{O}_{K}^{\times,+})=(1-u)\text{.}%
\]
As this is a principal ideal, its ideal norm can be computed just in terms of
the norm of the generating element.\ This means that%
\[
\#(\mathcal{O}_{K}/J(\mathcal{O}_{K}^{\times,+}))=\left\vert N_{K/\mathbf{Q}%
}(1-u)\right\vert =%
%TCIMACRO{\tprod \nolimits_{i=1}^{3}}%
%BeginExpansion
{\textstyle\prod\nolimits_{i=1}^{3}}
%EndExpansion
\sigma_{i}(1-u)\text{.}%
\]
As usual, let $\sigma_{1}$ denote the single real embedding and $\sigma
_{2},\overline{\sigma_{2}}=\sigma_{3}$ are the complex conjugate embeddings of
the single complex place. We have $\sigma_{1}(u)>0$ and therefore
$N_{K/\mathbf{Q}}(u)=\sigma_{1}(u)\left\vert \sigma_{2}(u)\right\vert ^{2}>0$
and the norm lies in $\{\pm1\}=\mathbf{Z}^{\times}$ since $u$ is a unit.
Hence, $N_{K/\mathbf{Q}}(u)=1$ and we can continue the above computation with
\begin{align}
&  =1-\sum_{i=1}^{3}\sigma_{i}(u)+\sum_{1\leq i<j\leq3}\sigma_{i}(u)\sigma
_{j}(u)-N_{K/\mathbf{Q}}(u)\nonumber\\
&  =\sum_{i<j}\sigma_{i}(u)\sigma_{j}(u)-\sum_{i=1}^{3}\sigma_{i}(u)\text{.}
\label{ltta1}%
\end{align}
By $\sigma_{1}(u)\left\vert \sigma_{2}(u)\right\vert ^{2}=1$ we have
$\left\vert \sigma_{2}(u)\right\vert =\sqrt{1/\sigma_{1}(u)}$. Thus, for
$z:=\max(\sigma_{1}(u),\sqrt{1/\sigma_{1}(u)})>0$ we get the estimate%
\[
\#(\mathcal{O}_{K}/J(\mathcal{O}_{K}^{\times,+}))\leq3z^{2}+3z=3(z+z^{2})
\]
from Equation \ref{ltta1}. From Prop. \ref{prop_main} we know that%
\[
\operatorname*{Vol}\left(  X\right)  =\frac{1}{4}\cdot\sqrt{\left\vert
\triangle_{K/\mathbf{Q}}\right\vert }\cdot R_{K}=\frac{1}{4}\cdot
\sqrt{\left\vert \triangle_{K/\mathbf{Q}}\right\vert }\cdot\log\left\vert
\sigma_{1}u\right\vert \text{,}%
\]
since the Dirichlet regulator is just formed from a $(1\times1)$-matrix in the
present situation, and the single entry comes from the logarithmic embedding
of the fundamental unit. Thus, reversing the usual logic, we can also say that%
\[
\sigma_{1}u=\exp\left(  4\frac{\operatorname*{Vol}\left(  X\right)  }%
{\sqrt{\left\vert \triangle_{K/\mathbf{Q}}\right\vert }}\right)  \text{.}%
\]
The claim follows from connecting this with our previous upper bound.
\end{proof}

\begin{example}
\label{Example_VolumeBounds}As usual in this text, let us compare this
estimate to precise values: We shall study the number fields%
\[
K:=\mathbf{Q}[T]/(T^{3}+8T-m)
\]
for $m\geq1$, whenever the given polynomial is irreducible. It is easy to see
that these are number fields with $s=t=1$. The discriminant of the order
$\mathbf{Z}[\overline{T}]\subseteq\mathcal{O}_{K}$ is easily computed to be%
\[
\triangle_{\mathbf{Z}[\overline{T}]/\mathbf{Q}}=-27m^{2}-2048
\]
and for most of the $1\leq m\leq10$ the order $\mathbf{Z}[\overline{T}]$ is
the entire ring of integers or at least has only a small index. With the help
the computer we obtain:%
\[%
\begin{tabular}
[c]{c|c|c|c}%
$m$ & $H_{1}(X,\mathbf{Z})_{\operatorname*{tor}}$ & upper bound &
$\operatorname*{Vol}\left(  X\right)  $\\\hline
\multicolumn{1}{r|}{$1$} & \multicolumn{1}{|r|}{$4$} &
\multicolumn{1}{|r|}{$13.54$} & \multicolumn{1}{|r}{$2.3702...$}\\
\multicolumn{1}{r|}{$2$} & \multicolumn{1}{|r|}{$2$} &
\multicolumn{1}{|r|}{$9.58$} & \multicolumn{1}{|r}{$1.0105...$}\\
\multicolumn{1}{r|}{$3$} & \multicolumn{1}{|r|}{$2856582$} &
\multicolumn{1}{|r|}{$8575220$} & \multicolumn{1}{|r}{$177.8782...$}\\
\multicolumn{1}{r|}{$4$} & \multicolumn{1}{|r|}{$32$} &
\multicolumn{1}{|r|}{$122.47$} & \multicolumn{1}{|r}{$22.1167...$}\\
\multicolumn{1}{r|}{$5$} & \multicolumn{1}{|r|}{$5146$} &
\multicolumn{1}{|r|}{$15731.73$} & \multicolumn{1}{|r}{$111.5530...$}\\
\multicolumn{1}{r|}{$6$} & \multicolumn{1}{|r|}{$288$} &
\multicolumn{1}{|r|}{$1022.58$} & \multicolumn{1}{|r}{$79.3724...$}\\
\multicolumn{1}{r|}{$7$} & \multicolumn{1}{|r|}{$1288$} &
\multicolumn{1}{|r|}{$4175.28$} & \multicolumn{1}{|r}{$104.6757...$}\\
\multicolumn{1}{r|}{$8$} & \multicolumn{1}{|r|}{$2$} &
\multicolumn{1}{|r|}{$11.07$} & \multicolumn{1}{|r}{$1.5189...$}\\
\multicolumn{1}{r|}{$10$} & \multicolumn{1}{|r|}{$14$} &
\multicolumn{1}{|r|}{$43.89$} & \multicolumn{1}{|r}{$41.7309...$}%
\end{tabular}
\]
The values in the two right-hand side columns have been truncated. The
particularly large values for $m=3,5$ are mostly caused by the fact that these
number fields have exceptionally large Dirichlet regulators. Allow me to
emphasize once more that the computation of the upper bounds requires the
determination of the fundamental unit just as does finding the torsion group.
Therefore, this estimate is truly not of any algorithmic use.
\end{example}

\section{\label{sect_SmallestVolume}Smallest volume}

Firstly, we must ask: Is this question well-defined at all?\medskip

Usually, when one looks at questions like%
\[
(\text{complex surfaces})\cap(\text{LCK manifolds})
\]
as in Vaisman's paper \cite{MR1038005}\footnote{This paper seems to have been
written in response to Wall's study \cite{MR827276}, \cite{MR837617}. Taking
inspiration from Thurston's geometries, Wall asks which $4$-dimensional
geometries (= nice simply connected Riemannian real manifolds whose isometry
group acts transitively and admits lattices) possess a complex structure so
that the isometry action is holomorphic. He finds that a complex structure
often exists, often unique, but not always K\"{a}hler.}, or%
\[
\left(  \text{real solvmanifolds}\right)  \cap\left(  \text{LCK manifolds}%
\right)
\]
as suggested in work of Hasegawa \cite{MR2235860}, we might primarily be
interested in the existence of a K\"{a}hler or LCK metric at all. Once such
exists, there can be many, at the very least we can rescale it
(\textquotedblleft K\"{a}hler cones\textquotedblright). In this sense the
volume depends on choices and it is a pointless task to find a smallest volume
among arbitrary choices. However, the situation is a little different for
Oeljeklaus-Toma manifolds.\medskip\newline For finite volume hyperbolic
$n$-manifolds $X,X^{\prime}$ (with $n\geq3$) if there exists an isomorphism of
fundamental groups $\pi_{1}(X,\ast)\overset{\sim}{\longrightarrow}\pi
_{1}(X^{\prime},\ast)$, then there even exists an isometry $\phi
:X\overset{\sim}{\longrightarrow}X^{\prime}$ (Mostow-Prasad Rigidity). In
particular, the volume is a topological invariant; homeomorphic spaces must
have the same volume. This makes it very interesting to study the possible
volumes, and to search for a smallest volume.\newline For the Oeljeklaus-Toma
manifolds $X(K;\mathcal{O}_{K}^{\times,+})$ the Proposition
\ref{prop_reconstruct} creates a somewhat similar situation. We get a
well-defined function%
\[
\operatorname*{Vol}:\left\{
\begin{array}
[c]{c}%
\text{spaces }X\text{ homeomorphic to an}\\
\text{Oeljeklaus-Toma manifold}%
\end{array}
\right\}  \longrightarrow\mathbf{R}%
\]
by associating to any $X$ its \textquotedblleft canonical
model\textquotedblright\ $(\mathbf{H}^{s}\times\mathbf{C})/(\pi_{1}(X,\ast))$,
which comes with the standard normalized Oeljeklaus-Toma metric. So at least
after fixing once and for all a normalized metric (as we have done in this
text), we get a well-defined volume and in particular well-defined infimum of volumes.

The situation might be quite different for the spaces $X(K;U)$ for $t>1$
complex places. As Example \ref{Example_CannotReconstructForMoreComplexPlaces}
shows, there are different number fields $K,K^{\prime}$ and admissible
subgroups $U,U^{\prime}$ so that there exists a diffeomorphism%
\[
\frac{\mathbf{H}^{s}\times\mathbf{C}^{t}}{\mathcal{O}_{K}\rtimes U}%
\overset{\sim}{\longrightarrow}\frac{\mathbf{H}^{s}\times\mathbf{C}^{t}%
}{\mathcal{O}_{K^{\prime}}\rtimes U^{\prime}}\text{,}%
\]
yet even if there happens to exist a normalized LCK metric (for example
Battisti's generalized Oeljeklaus-Toma metric, \cite[Appendix]{MR3193953}), I
would suspect the volumes to differ. Example
\ref{Example_CannotReconstructForMoreComplexPlaces}\ however says nothing
about this since these spaces do not admit any LCK\ metrics for sure, as we
explain \textit{loc. cit}.\medskip

This being said and an overall normalization chosen, let us investigate
whether there is a smallest volume. Certainly, the infimum of volumes could
just be zero. For those readers who like the bridge to hyperbolic
$3$-manifolds as alluded to in \S \ref{sect_toymodelproducthyperbolic}, it
should be said that there is a unique smallest compact orientable hyperbolic
$3$-manifold, the Weeks manifold \cite{MR1882023}, \cite{MR2525782}. Its
volume is%
\[
\frac{3\cdot23^{\frac{3}{2}}}{4\pi^{4}}\zeta_{K}(2)\qquad\text{for}\qquad
K:=\mathbf{Q}[T]/(T^{3}-T+1)\text{.}%
\]
This cubic number field $K$ is the one whose discriminant has the smallest
absolute value among all cubic fields. The volume of its Oeljeklaus-Toma
manifold is $\approx0.33714644$. Surprisingly, it turns out that this is also
the smallest possible volume of an Oeljeklaus-Toma manifold with $s=1$.

By quoting some rather hard results from analytic number theory and the
geometry of numbers, one can show with little effort that, once fixing a
number of real places $s$, the volume among all Oeljeklaus-Toma manifolds
generally stays bounded away from zero:

\begin{proposition}
\label{prop_intext_minvol}For every $s\geq1$ there exists a unique real number
$\operatorname*{Vol}\nolimits_{s}$ so that the following holds:

\begin{enumerate}
\item All Oeljeklaus-Toma manifolds with fixed $s$ have volume $\geq
\operatorname*{Vol}\nolimits_{s}$.

\item There exists at least one, but at most finitely many, actually attaining
this minimal volume $\operatorname*{Vol}\nolimits_{s}$.

\item We have the crude lower bound
\[
\operatorname*{Vol}\nolimits_{s}\geq\pi\frac{(s+2)^{s+1}}{4^{s+2}\cdot
2^{s^{2}}\cdot s!}\text{.}%
\]

\end{enumerate}

For the special case $s=1$ there is a unique Oeljeklaus-Toma manifold of
smallest volume, namely%
\[
\operatorname*{Vol}\nolimits_{1}=0.337146\ldots
\]
It is the one coming from the number field%
\[
K:=\mathbf{Q}[T]/(T^{3}-T+1)\text{.}%
\]

\end{proposition}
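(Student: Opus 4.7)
The plan is to reduce all three claims to quantitative control on the product $\sqrt{|\triangle_{K/\mathbf{Q}}|}\cdot R_K$. By Proposition \ref{prop_main}, Equation \ref{l6}, the volume of $X(K;\mathcal{O}_K^{\times,+})$ is exactly $\tfrac{s+1}{4^s\cdot 2^{s^2}}$ times this quantity, so everything becomes a statement about the arithmetic of number fields of signature $(s,1)$.

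For the explicit lower bound in (3), I would combine two classical estimates from the geometry of numbers. Minkowski's discriminant bound, specialized to signature $(s,1)$, gives $\sqrt{|\triangle_{K/\mathbf{Q}}|} \geq \frac{(s+2)^{s+2}}{(s+2)!}\cdot\frac{\pi}{4}$. An effective universal lower bound on the Dirichlet regulator (Remak, Zimmert, Friedman) then provides $R_K \geq 1/4$ in signature $(s,1)$ with $s\geq 1$. Plugging both into the volume formula and simplifying using the identity $\frac{s+1}{(s+2)!} = \frac{1}{(s+2)\cdot s!}$ produces exactly the stated constant $\pi\cdot\frac{(s+2)^{s+1}}{4^{s+2}\cdot 2^{s^2}\cdot s!}$.

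Claims (1) and (2) then fall out of Hermite's finiteness theorem. Any fixed upper bound $V$ on $\operatorname{Vol}(X)$, combined with the regulator lower bound, forces an upper bound on $|\triangle_{K/\mathbf{Q}}|$ via the volume formula, and only finitely many number fields of degree $s+2$ satisfy this. Applying this with any $V$ strictly larger than the infimum shows that the infimum is attained and realized by at most finitely many manifolds. For the concrete $s=1$ claim, I would instantiate this argument to obtain an explicit discriminant cutoff, enumerate all cubic fields of signature $(1,1)$ below that cutoff (the small-discriminant list begins $|\triangle|=23,31,44,\ldots$, well-tabulated), compute each regulator, and verify directly that $K=\mathbf{Q}[T]/(T^3-T+1)$ minimizes the volume with $\operatorname{Vol}(X)=\tfrac{1}{4}\sqrt{23}\cdot 0.28119\ldots = 0.337146\ldots$, while every other cubic candidate gives something strictly larger.

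The main obstacle is producing the clean regulator lower bound $R_K \geq 1/4$: this is the single deep analytic input, and the unadorned Friedman-type universal constant $0.2052$ degrades the explicit bound in (3). Recovering the tidy stated constant likely requires either citing a regulator bound tailored to signature $(s,1)$, or absorbing a numerical slack from Minkowski. The $s=1$ enumeration itself is a routine finite check; its only subtlety is ensuring that the cutoff used is large enough that no small-discriminant cubic field of signature $(1,1)$ has been missed.
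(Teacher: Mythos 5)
Your proposal is correct and follows essentially the same route as the paper: Proposition \ref{prop_main} plus a universal regulator lower bound plus Minkowski's discriminant bound yields the explicit constant in (3) (with the same factorial simplification), while Hermite--Minkowski finiteness combined with the regulator bound gives attainment and finiteness in (1)--(2) and reduces the $s=1$ case to a tiny finite check around discriminant $23$. Your one flagged obstacle is resolved exactly as you anticipated: the paper cites Friedman's Theorem B, giving $R_{K}>\frac{1}{4}$ for all number fields apart from three explicit degree-$6$ exceptions (which it argues can be ignored), and for the $s=1$ endgame it uses Friedman's signature-$(1,1)$ table value $R_{K}\geq 0.28$ valid for $\left\vert \triangle_{K/\mathbf{Q}}\right\vert <18.7^{3}$, which is precisely the ``bound tailored to signature'' you hoped for.
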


\begin{proof}
All the real work here lies in a deep result of Friedman \cite{MR1022309},
based on earlier work of Remak and Zimmert. We have:

\begin{itemize}
\item For every number field $K$, apart from three exceptions with
$[K:\mathbf{Q}]=6$, we have $R_{K}>\frac{1}{4}$ (\cite[Theorem B]{MR1022309}).

\item For every number field $K$ with $s=t=1$ and $\left\vert \triangle
_{K/\mathbf{Q}}\right\vert <18.7^{3}$ we have $R_{K}/2\geq0.14$ (\cite[Prop.
2.2 and Table 2 for $(r_{1},r_{2})=(1,1)$]{MR1022309}).
\end{itemize}

If one is willing to accept far weaker bounds, a short proof of a lower bound
for the regulator in terms of $s$ can also be found in \cite{MR1225260}. Let
$X$ be an arbitrary Oeljeklaus-Toma manifold for a given $s\geq1$. From the
first estimate and Prop. \ref{prop_main} we readily obtain the bound%
\[
\operatorname*{Vol}\left(  X\right)  >\frac{(s+1)}{4^{s+1}\cdot2^{s^{2}}}%
\cdot\sqrt{\left\vert \triangle_{K/\mathbf{Q}}\right\vert }\text{,}%
\]
except for finitely many fields and we can ignore them as this does not affect
the validity of our claim (since their regulators are explicitly known and
listed in Friedman's work, we could also just work with the overall minimal
regulator). Furthermore, there is the standard Minkowski discriminant estimate%
\[
\sqrt{\left\vert \triangle_{K/\mathbf{Q}}\right\vert }\geq\left(  \frac{\pi
}{4}\right)  \frac{n^{n}}{n!}\qquad\text{for }n:=s+2
\]
the degree of the field. Combining these inequalities, we arrive at%
\begin{equation}
\operatorname*{Vol}\left(  X\right)  >\pi\frac{(s+1)(s+2)^{s+2}}{4^{s+2}%
\cdot2^{s^{2}}(s+2)!}\text{.} \label{lja1}%
\end{equation}
Work of Odlyzko, Martinet, and many others would give much better lower bounds
for particular ranges of $s$, but this estimate suffices for our needs. Define%
\[
V_{s}=\{\operatorname*{Vol}\left(  X\right)  \mid X(K,\mathcal{O}_{K}%
^{\times,+})\text{ for any }K\text{ with }t=1\text{ and given }s\}\subset
\mathbf{R}\text{,}%
\]
the set of all volumes that can occur for fixed $s$. This set is non-empty and
bounded from below by Equation \ref{lja1}, so it will have some infimum
$\wp:=\inf(V_{s})$. We now argue by contradiction: Suppose there is no $X$
whose volume attains this infimum. This means that there exists a sequence of
number fields $K_{n}$ so that%
\begin{align}
\wp &  =\underset{n\rightarrow\infty}{\lim}\operatorname*{Vol}\left(
X(K_{n},\mathcal{O}_{K_{n}}^{\times,+})\right) \nonumber\\
&  =\frac{(s+1)}{4^{s}\cdot2^{s^{2}}}\cdot\underset{n\rightarrow\infty}{\lim
}\sqrt{\left\vert \triangle_{K_{n}/\mathbf{Q}}\right\vert }\cdot R_{K_{n}%
}\text{.} \label{lja2}%
\end{align}
The Hermite-Minkowski Theorem tells us that there are only finitely many
number fields of bounded discriminant $\left\vert \triangle_{K/\mathbf{Q}%
}\right\vert <C$ for any $C\geq0$, so if the sequence $(\left\vert
\triangle_{K_{n}/\mathbf{Q}}\right\vert )_{n\geq0}$ stays bounded,
$\{K_{0},K_{1},K_{2},\ldots\}$ is actually a finite set and therefore some
$K_{i}$ will realize the infimum, contradicting our assumption. Thus, we must
have $\lim\nolimits_{n\rightarrow\infty}\sqrt{\left\vert \triangle
_{K_{n}/\mathbf{Q}}\right\vert }=+\infty$. Hence, from Equation \ref{lja2} we
can deduce that $\lim\nolimits_{n\rightarrow\infty}R_{K_{n}}=0$. This
contradicts Friedman's bound $R_{K_{n}}>\frac{1}{4}$. Thus, there exists at
least one $K_{i}$ with $\operatorname*{Vol}\left(  X(K_{n},\mathcal{O}_{K_{n}%
}^{\times,+})\right)  =\wp$. If $\{K_{i}\}$ now denotes the (possibly
infinite) set of all number fields realizing the volume $\wp$, that is%
\[
\wp=\frac{(s+1)}{4^{s}\cdot2^{s^{2}}}\cdot\underset{n\rightarrow\infty}{\lim
}\sqrt{\left\vert \triangle_{K_{n}/\mathbf{Q}}\right\vert }\cdot R_{K_{n}%
}\text{,}%
\]
\newline the same argument as above shows that the set $\{K_{i}\}$ must be
finite, for otherwise the discriminants grow arbitrarily large, ultimately
forcing regulators $\leq\frac{1}{4}$, which is impossible.

Next, consider the case $s=t=1$: Firstly, (for $\left\vert \triangle
_{K/\mathbf{Q}}\right\vert \geq18.7^{3}$) the first Friedman estimate shows
that%
\[
\operatorname*{Vol}\left(  X\right)  =\frac{1}{4}\cdot\sqrt{\left\vert
\triangle_{K/\mathbf{Q}}\right\vert }\cdot R_{K}>\frac{1}{16}\cdot
\sqrt{18.7^{3}}>5\text{.}%
\]
Next, suppose $\left\vert \triangle_{K/\mathbf{Q}}\right\vert <18.7^{3}$. Then
the second Friedman estimate implies%
\[
\operatorname*{Vol}\left(  X\right)  =\frac{1}{4}\cdot\sqrt{\left\vert
\triangle_{K/\mathbf{Q}}\right\vert }\cdot R_{K}>\frac{0.28}{4}\cdot
\sqrt{\left\vert \triangle_{K/\mathbf{Q}}\right\vert }%
\]
and since the smallest possible discriminant of a cubic field is $\left\vert
\triangle_{K/\mathbf{Q}}\right\vert =23$, we deduce $\operatorname*{Vol}%
\left(  X\right)  >0.335708$. Thus, we have a good lower bound for the
smallest possible volume. Next, let us assume that $K$ has a discriminant of
absolute value larger than $23$, so at least $24$. Then Friedman's bound shows
that%
\[
\operatorname*{Vol}\left(  X\right)  >\frac{0.28}{4}\cdot\sqrt{24}%
>0.3429\text{.}%
\]
Since the Oeljeklaus-Toma manifold of $K:=\mathbf{Q}[T]/(T^{3}-T+1)$ has the
underlined volume in%
\[
0.335708<\underline{0.3371\ldots}<0.3429\text{,}%
\]
we deduce that the minimal volume can (and is) attained only for number fields
$K$ with $s=t=1$ and discriminant $\left\vert \triangle_{K/\mathbf{Q}%
}\right\vert =23$. However, in the present case it is known that there exists
only one number field with discriminant of absolute value $23$.
\end{proof}

\begin{proof}
[Proof of Prop. \ref{prop_boundedminvolume}]This is just a reformulation of
the previous result, using that the dimension of an Oeljeklaus-Toma manifold
is $\dim X=s+2$.
\end{proof}

After the Weeks manifold, the compact oriented arithmetic hyperbolic
$3$-manifold of next larger volume is the Meyerhoff manifold, \cite{MR1882023}%
. It was shown by Chinburg \cite{MR883417} to be arithmetic and to have volume%
\[
\frac{12\cdot283^{\frac{3}{2}}}{(2\pi)^{6}}\zeta_{K}(2)\qquad\text{for}\qquad
K:=\mathbf{Q}[T]/(T^{4}-T-1)\text{.}%
\]
This quartic number field $K$ has $s=2$ and $t=1$ real resp. complex places
and discriminant $-283$. It is known that the smallest discriminants for these
numbers of places are as given on the left-hand side column in the following
table:%
\[%
\begin{tabular}
[c]{c|c|c}%
$\triangle_{K/\mathbf{Q}}$ & $\operatorname*{Vol}\left(  X\right)  $ & min.
polynomial\\\hline
\multicolumn{1}{r|}{$-275$} & \multicolumn{1}{|r|}{$0.0717$} &
\multicolumn{1}{|r}{$T^{4}-T^{3}+2T-1$}\\
\multicolumn{1}{r|}{$-283$} & \multicolumn{1}{|r|}{$0.0745$} &
\multicolumn{1}{|r}{$T^{4}-T-1$}\\
\multicolumn{1}{r|}{$-331$} & \multicolumn{1}{|r|}{$0.0921$} &
\multicolumn{1}{|r}{$T^{4}-T^{3}+T^{2}+T-1$}\\
\multicolumn{1}{r|}{$-400$} & \multicolumn{1}{|r|}{$0.1196$} &
\multicolumn{1}{|r}{$T^{4}-T^{2}-1$}\\
\multicolumn{1}{r|}{$-475$} & \multicolumn{1}{|r|}{$0.1473$} &
\multicolumn{1}{|r}{$T^{4}-2T^{3}+T^{2}-2T+1$}%
\end{tabular}
\]
We leave it to the reader to show that the middle column indeed gives the
smallest four possible volumes for Oeljeklaus-Toma manifolds with two real
places. One can proceed as in the argument above, this time using Friedman's
estimate $R_{K}/2>0.1835$ for $\left\vert \triangle_{K/\mathbf{Q}}\right\vert
\leq36^{4}$, \cite[Prop. 2.2 and Table 2 for $(r_{1},r_{2})=(2,1)$]{MR1022309}.

\begin{example}
We will now determine the minimal volumes of Oeljeklaus-Toma manifolds for
$s=1,2,3,4,5$. We follow the same method as in the proof of Prop.
\ref{prop_intext_minvol}, but suppress a number of details and just explain
the general pattern. It would seem entirely hopeless to me to perform the
necessary verifications below without the help of a computer. Using tables for
minimal known discriminants, we first compile the following table:%
\begin{equation}%
\begin{tabular}
[c]{c|c|c|c|c|c}%
$s$ & $R_{K}^{>}$ & $\left\vert \triangle_{K/\mathbf{Q}}\right\vert _{1^{st}}$
& $\left\vert \triangle_{K/\mathbf{Q}}\right\vert _{2^{nd}}$ &
$\operatorname*{Vol}$ of $1^{st}$ & $V_{2^{nd}}^{>}$\\\hline
\multicolumn{1}{r|}{$1$} & \multicolumn{1}{|r|}{$0.28$} &
\multicolumn{1}{|r|}{$23$} & \multicolumn{1}{|r|}{$31$} &
\multicolumn{1}{|r|}{$0.33714$} & \multicolumn{1}{|r}{$0.38974$}\\
\multicolumn{1}{r|}{$2$} & \multicolumn{1}{|r|}{$0.367$} &
\multicolumn{1}{|r|}{$275$} & \multicolumn{1}{|r|}{$283$} &
\multicolumn{1}{|r|}{$0.07174$} & \multicolumn{1}{|r}{$0.07235$}\\
\multicolumn{1}{r|}{$3$} & \multicolumn{1}{|r|}{$0.6218$} &
\multicolumn{1}{|r|}{$4511$} & \multicolumn{1}{|r|}{$4903$} &
\multicolumn{1}{|r|}{$0.00515$} & \multicolumn{1}{|r}{$0.00531$}\\
\multicolumn{1}{r|}{$4$} & \multicolumn{1}{|r|}{$1.2376$} &
\multicolumn{1}{|r|}{$92779$} & \multicolumn{1}{|r|}{$94363$} &
\multicolumn{1}{|r|}{$0.0001146$} & \multicolumn{1}{|r}{$0.0001133$}\\
\multicolumn{1}{r|}{$5$} & \multicolumn{1}{|r|}{$2.7822$} &
\multicolumn{1}{|r|}{$2306599$} & \multicolumn{1}{|r|}{$2369207$} &
\multicolumn{1}{|r|}{$7.650\cdot10^{-7}$} & \multicolumn{1}{|r}{$7.478\cdot
10^{-7}$}%
\end{tabular}
\label{lTable1}%
\end{equation}
Here the column \textquotedblleft$R_{K}^{>}$\textquotedblright\ lists a lower
bound for the regulator of all number fields with given $s$ and $t=1$. We just
copied these values from the work of Friedman (\cite[Table $2$ for
$(r_{1},r_{2})=(s,1)$]{MR1022309}), noting that his table spells out lower
bounds for $R_{K}/2$. His values are only valid for discriminants smaller than
certain bounds also given in \cite[Table $2$]{MR1022309}, but these are
harmless in all cases we deal with. Unsurprisingly so, as we are mostly
interested in the smallest possible discriminants. The columns
\textquotedblleft$\left\vert \triangle_{K/\mathbf{Q}}\right\vert _{1^{st}}%
$\textquotedblright\ and \textquotedblleft$\left\vert \triangle_{K/\mathbf{Q}%
}\right\vert _{2^{nd}}$\textquotedblright\ list the smallest and second
smallest discriminant possible for the given $s$ and $t=1$. In principle there
could be several number fields realizing the smallest discriminant, but in all
cases we touch here, there is a unique one:
\begin{equation}%
\begin{tabular}
[c]{c|c}%
$s$ & number field of $\left\vert \triangle_{K/\mathbf{Q}}\right\vert
_{1^{st}}$\\\hline
\multicolumn{1}{r|}{$1$} & \multicolumn{1}{|r}{$T^{3}-T^{2}+1$}\\
\multicolumn{1}{r|}{$2$} & \multicolumn{1}{|r}{$T^{4}-T^{3}+2T-1$}\\
\multicolumn{1}{r|}{$3$} & \multicolumn{1}{|r}{$T^{5}-T^{3}-2T^{2}+1$}\\
\multicolumn{1}{r|}{$4$} & \multicolumn{1}{|r}{$T^{6}-T^{5}-2T^{4}%
+3T^{3}-T^{2}-2T+1$}\\
\multicolumn{1}{r|}{$5$} & \multicolumn{1}{|r}{$T^{7}-3T^{5}-T^{4}%
+T^{3}+3T^{2}+T-1$}%
\end{tabular}
\label{lTable2}%
\end{equation}
We compute their regulators with the help of a computer and therefore obtain
the volumes of the Oeljeklaus-Toma manifolds associated to the number field of
smallest possible discriminant. These values are listed in the column
\textquotedblleft$\operatorname*{Vol}$ of $1^{st}$\textquotedblright. Next, we
use Friedman's bound to compute a lower bound on the volumes of all
Oeljeklaus-Toma manifolds from number fields of discriminant at least the
second smallest, i.e. in the column \textquotedblleft$V_{2^{nd}}^{>}%
$\textquotedblright\ we list%
\begin{equation}
\frac{(s+1)}{4^{s}\cdot2^{s^{2}}}\sqrt{\left\vert \triangle_{K/\mathbf{Q}%
}\right\vert _{2^{nd}}}\cdot\text{(Friedman~bound }R_{K}^{>}\text{).}
\label{lja3}%
\end{equation}
The cases $s=1,2,3$ are obvious now: We find that as soon as we use number
fields whose discriminants are second smallest or larger, we will exceed the
volume of the Oeljeklaus-Toma manifold made from the number field of smallest
discriminant. The case $s=4$ is more involved since we see that the
Oeljeklaus-Toma manifold of the unique sextic field of smallest discriminant
has a volume strictly larger than a volume that could hypothetically occur for
the second smallest discriminant as well. In fact, the second smallest
discriminant for $s=4$, that is $-94363$, is realized by the number field of%
\[
T^{6}-2T^{4}-2T^{3}+3T+1\text{.}%
\]
We compute its volume to be $0.000116$, so it is not smaller. The next larger
discriminant is known to be $\left\vert \triangle_{K/\mathbf{Q}}\right\vert
_{3^{rd}}=103243$, and Friedman's bound as in Equation \ref{lja3} yields a
minimal volume of $0.00011851$ for discriminants $\geq\left\vert
\triangle_{K/\mathbf{Q}}\right\vert _{3^{rd}}$. This settles the case: Still,
the manifold coming from the number field of smallest discriminant has also
the smallest volume. For $s=5$ the same happens. The second smallest
discriminant is realized by%
\begin{equation}
T^{7}-4T^{5}+3T^{3}-T^{2}+T+1 \label{lja4}%
\end{equation}
and we compute its volume to be $7.88\cdot10^{-7}$, so its volume is larger.
We have $\left\vert \triangle_{K/\mathbf{Q}}\right\vert _{3^{rd}}=2616839$ and
Friedman's bound shows that for this and larger discriminants the volumes must
be at least $7.85\cdot10^{-7}$. This confirms that we have found the smallest
one, but do not know for sure whether Equation \ref{lja4} defines the second
smallest one.\newline We conclude: The values listed under \textquotedblleft%
$\operatorname*{Vol}$ of $1^{st}$\textquotedblright\ in Table \ref{lTable1}
provide the smallest possible volume for the given $s$, and in each case are
realized by only a single manifold; and these are the ones listed in Table
\ref{lTable2}.
\end{example}

Although all of the above computations might suggest that the volumes follow
the ordering of ascending discriminants, this simple pattern completely
collapses as we get farther from the minimal volumes. The polynomials in the
following table have been chosen rather at random, but ordering the rows by
increasing volume shows that this does not imply much about the ordering of
the discriminants:%
\[%
\begin{tabular}
[c]{c|c|c}%
$\triangle_{K/\mathbf{Q}}$ & $\operatorname*{Vol}\left(  X\right)  $ & min.
polynomial\\\hline
\multicolumn{1}{r|}{$-1931$} & \multicolumn{1}{|r|}{$0.7162$} &
\multicolumn{1}{|r}{$T^{4}+3T+1$}\\
\multicolumn{1}{r|}{$-6371$} & \multicolumn{1}{|r|}{$3.0870$} &
\multicolumn{1}{|r}{$T^{4}+13T+1$}\\
\multicolumn{1}{r|}{$-8123$} & \multicolumn{1}{|r|}{$3.5939$} &
\multicolumn{1}{|r}{$T^{4}-4T^{3}-T-1$}\\
\multicolumn{1}{r|}{$-12675$} & \multicolumn{1}{|r|}{$4.6792$} &
\multicolumn{1}{|r}{$T^{4}-8T^{3}-T-1$}\\
\multicolumn{1}{r|}{$-6656$} & \multicolumn{1}{|r|}{$5.3600$} &
\multicolumn{1}{|r}{$T^{4}-4T+1$}\\
\multicolumn{1}{r|}{$-16619$} & \multicolumn{1}{|r|}{$7.5061$} &
\multicolumn{1}{|r}{$T^{4}-5T+1$}\\
\multicolumn{1}{r|}{$-8684$} & \multicolumn{1}{|r|}{$9.2152$} &
\multicolumn{1}{|r}{$T^{4}-6T+1$}%
\end{tabular}
\]

Although Friedman's estimates are very non-trivial results, the result that
the Weeks manifold has smallest volume among compact hyperbolic $3$-manifolds
is of a completely different level of complexity. This remark truly applies to
any comparison we make between Oeljeklaus-Toma manifolds and hyperbolic or
product-hyperbolic geometries in this text.

\begin{acknowledgement}
I would like to express my sincere gratitude to Victor Vuletescu for teaching
me a lot of things, not all of them of mathematical nature. This note is a
direct result of his inspiring ideas about the interplay of geometric and
arithmetic conditions in Oeljeklaus-Toma manifolds. I also thank Chris
Wuthrich for introducing me to SAGE.
\end{acknowledgement}

\bibliographystyle{alpha}
\bibliography{ollinewbib}

\newcommand{\etalchar}[1]{$^{#1}$}
\def\cprime{$'$} \def\cprime{$'$} \def\cprime{$'$} \def\cprime{$'$}
\begin{thebibliography}{{The}14}

\bibitem[Bro13]{MR3117524}
Francis C.~S. Brown.
\newblock Dedekind zeta motives for totally real number fields.
\newblock {\em Invent. Math.}, 194(2):257--311, 2013.

\bibitem[CFJR01]{MR1882023}
Ted Chinburg, Eduardo Friedman, Kerry~N. Jones, and Alan~W. Reid.
\newblock The arithmetic hyperbolic 3-manifold of smallest volume.
\newblock {\em Ann. Scuola Norm. Sup. Pisa Cl. Sci. (4)}, 30(1):1--40, 2001.

\bibitem[Chi87]{MR883417}
Ted Chinburg.
\newblock A small arithmetic hyperbolic three-manifold.
\newblock {\em Proc. Amer. Math. Soc.}, 100(1):140--144, 1987.

\bibitem[DO98]{MR1481969}
Sorin Dragomir and Liviu Ornea.
\newblock {\em Locally conformal {K}\"ahler geometry}, volume 155 of {\em
  Progress in Mathematics}.
\newblock Birkh\"auser Boston, Inc., Boston, MA, 1998.

\bibitem[Dub14]{MR3193953}
Art{\=u}ras Dubickas.
\newblock Nonreciprocal units in a number field with an application to
  {O}eljeklaus-{T}oma manifolds (with an appendix by {L}aurent {B}attisti).
\newblock {\em New York J. Math.}, 20:257--274, 2014.

\bibitem[Fri89]{MR1022309}
Eduardo Friedman.
\newblock Analytic formulas for the regulator of a number field.
\newblock {\em Invent. Math.}, 98(3):599--622, 1989.

\bibitem[GMM09]{MR2525782}
David Gabai, Robert Meyerhoff, and Peter Milley.
\newblock Minimum volume cusped hyperbolic three-manifolds.
\newblock {\em J. Amer. Math. Soc.}, 22(4):1157--1215, 2009.

\bibitem[Gro81]{MR636516}
Michael Gromov.
\newblock Hyperbolic manifolds (according to {T}hurston and {J}\o rgensen).
\newblock In {\em Bourbaki {S}eminar, {V}ol. 1979/80}, volume 842 of {\em
  Lecture Notes in Math.}, pages 40--53. Springer, Berlin-New York, 1981.

\bibitem[Has05]{MR2235860}
Keizo Hasegawa.
\newblock Complex and {K}\"ahler structures on compact solvmanifolds.
\newblock {\em J. Symplectic Geom.}, 3(4):749--767, 2005.
\newblock Conference on Symplectic Topology.

\bibitem[Ino74]{MR0342734}
Masahisa Inoue.
\newblock On surfaces of {C}lass {${\rm VII}_{0}$}.
\newblock {\em Invent. Math.}, 24:269--310, 1974.

\bibitem[Ish73]{MR0335469}
Makoto Ishida.
\newblock Fundamental units of certain algebraic number fields.
\newblock {\em Abh. Math. Sem. Univ. Hamburg}, 39:245--250, 1973.

\bibitem[Kas13]{MR3033950}
Hisashi Kasuya.
\newblock Vaisman metrics on solvmanifolds and {O}eljeklaus-{T}oma manifolds.
\newblock {\em Bull. Lond. Math. Soc.}, 45(1):15--26, 2013.

\bibitem[Mos54]{MR0061611}
G.~D. Mostow.
\newblock Factor spaces of solvable groups.
\newblock {\em Ann. of Math. (2)}, 60:1--27, 1954.

\bibitem[OT05]{MR2141693}
K.~Oeljeklaus and M.~Toma.
\newblock Non-{K}\"ahler compact complex manifolds associated to number fields.
\newblock {\em Ann. Inst. Fourier (Grenoble)}, 55(1):161--171, 2005.

\bibitem[OV13]{MR3195237}
Liviu Ornea and Victor Vuletescu.
\newblock Oeljeklaus-{T}oma manifolds and locally conformally {K}\"ahler
  metrics. {A} state of the art.
\newblock {\em Stud. Univ. Babe\c s-Bolyai Math.}, 58(4):459--468, 2013.

\bibitem[PV12]{MR2875828}
M.~Parton and V.~Vuletescu.
\newblock Examples of non-trivial rank in locally conformal {K}\"ahler
  geometry.
\newblock {\em Math. Z.}, 270(1-2):179--187, 2012.

\bibitem[S{\etalchar{+}}]{sage}
W.\thinspace{}A. Stein et~al.
\newblock {\em {S}age {M}athematics {S}oftware ({V}ersion 4.7.2)}.
\newblock The Sage Development Team.
\newblock {\tt http://www.sagemath.org}.

\bibitem[Sko93]{MR1225260}
Nils-Peter Skoruppa.
\newblock Quick lower bounds for regulators of number fields.
\newblock {\em Enseign. Math. (2)}, 39(1-2):137--141, 1993.

\bibitem[{The}14]{PARI2}
{The PARI~Group}, Bordeaux.
\newblock {\em {PARI/GP version {\tt 2.7.0}}}, 2014.
\newblock available from \url{http://pari.math.u-bordeaux.fr/}.

\bibitem[Vai76]{MR0418003}
Izu Vaisman.
\newblock On locally conformal almost {K}\"ahler manifolds.
\newblock {\em Israel J. Math.}, 24(3-4):338--351, 1976.

\bibitem[Vai87]{MR1038005}
Izu Vaisman.
\newblock Non-{K}\"ahler metrics on geometric complex surfaces.
\newblock {\em Rend. Sem. Mat. Univ. Politec. Torino}, 45(3):117--123 (1989),
  1987.

\bibitem[Vul14]{MR3236651}
Victor Vuletescu.
\newblock L{CK} metrics on {O}eljeklaus-{T}oma manifolds versus {K}ronecker's
  theorem.
\newblock {\em Bull. Math. Soc. Sci. Math. Roumanie (N.S.)},
  57(105)(2):225--231, 2014.

\bibitem[Wal85]{MR827276}
C.~T.~C. Wall.
\newblock Geometries and geometric structures in real dimension {$4$} and
  complex dimension {$2$}.
\newblock In {\em Geometry and topology ({C}ollege {P}ark, {M}d., 1983/84)},
  volume 1167 of {\em Lecture Notes in Math.}, pages 268--292. Springer,
  Berlin, 1985.

\bibitem[Wal86]{MR837617}
C.~T.~C. Wall.
\newblock Geometric structures on compact complex analytic surfaces.
\newblock {\em Topology}, 25(2):119--153, 1986.

\end{thebibliography}

\newpage

\section{Appendix}

\begin{code}
\label{comp_code_SageForExampleNoReconstruct}The computations underlying
Example \ref{Example_CannotReconstructForMoreComplexPlaces} can be confirmed
in an automated fashion by computer algebra systems. The following code is
written for \textrm{SAGE} \cite{sage}, largely using \textrm{PARI/GP}
\cite{PARI2}. Firstly, we confirm that $S$ was a generator of the group of
units (up to torsion):\medskip\newline\texttt{L1.%
%TCIMACRO{\TEXTsymbol{<}}%
%BeginExpansion
$<$%
%EndExpansion
s%
%TCIMACRO{\TEXTsymbol{>} }%
%BeginExpansion
$>$
%EndExpansion
= NumberField(x\symbol{94}3+x+1)\newline print L1.unit\_group().gens()\medskip
}\newline This computation can also be done by hand using the Minkowski
bounds; but remember that this verification was actually not needed for the
validity of the example. Next, we check the crucial fact that the order
$J_{i}$ is maximal, i.e. that it is the ring of integers:\medskip
\newline\texttt{from sage.rings.number\_field.order import *\newline E.%
%TCIMACRO{\TEXTsymbol{<}}%
%BeginExpansion
$<$%
%EndExpansion
s,t%
%TCIMACRO{\TEXTsymbol{>} }%
%BeginExpansion
$>$
%EndExpansion
= NumberField([x\symbol{94}3+x+1,x\symbol{94}3-x+2])\newline C.%
%TCIMACRO{\TEXTsymbol{<}}%
%BeginExpansion
$<$%
%EndExpansion
w%
%TCIMACRO{\TEXTsymbol{>} }%
%BeginExpansion
$>$
%EndExpansion
= E.absolute\_field()\newline V, from\_v, to\_v = C.vector\_space()\newline J
= span([to\_v(1), to\_v(s), to\_v(s\symbol{94}2), to\_v(t), to\_v(t\symbol{94}%
2),}\newline\texttt{to\_v(s*t), to\_v(s\symbol{94}2*t), to\_v(s*t\symbol{94}%
2), to\_v(s\symbol{94}2*t\symbol{94}2)],ZZ)\newline O = AbsoluteOrder(C,
J)\newline print O.is\_maximal()\medskip}\newline Adapt the minimal polynomial
for $t$ to check both cases $i=2,3$. Finally, we check that the compositum of
the Galois closures has degree 216:\medskip\newline\texttt{L1.%
%TCIMACRO{\TEXTsymbol{<}}%
%BeginExpansion
$<$%
%EndExpansion
s%
%TCIMACRO{\TEXTsymbol{>} }%
%BeginExpansion
$>$
%EndExpansion
= NumberField(x\symbol{94}3+x+1)\newline L2.%
%TCIMACRO{\TEXTsymbol{<}}%
%BeginExpansion
$<$%
%EndExpansion
t%
%TCIMACRO{\TEXTsymbol{>} }%
%BeginExpansion
$>$
%EndExpansion
= NumberField(x\symbol{94}3-x+2)\newline L3.%
%TCIMACRO{\TEXTsymbol{<}}%
%BeginExpansion
$<$%
%EndExpansion
u%
%TCIMACRO{\TEXTsymbol{>} }%
%BeginExpansion
$>$
%EndExpansion
= NumberField(x\symbol{94}3-x+1)\newline H1 =
L1.galois\_group(names='b').splitting\_field()\newline H2 =
L2.galois\_group(names='c').splitting\_field()\newline H3 =
L3.galois\_group(names='d').splitting\_field()\newline B =
H1.composite\_fields(H2)[0]\newline C = B.composite\_fields(H3)[0]\newline
print C.degree()\medskip}\newline Of course it would not be particularly hard
to perform this computation by hand, just a bit tedious.
\end{code}

\begin{code}
\label{comp_code_ComputeIdealJ}We discuss the determination of the ideal
$J(U)$, Definition \ref{def_IdealJ}, by computer. We have used this for our
Example \ref{example_ComputeJ2}. The following code runs through the number
fields generated by the minimal polynomials $Z^{3}-Z+h$, whenever these are
irreducible, for $h=1,\ldots,9$. In this particular case these number fields
have $s=t=1$ real resp. complex places, so $\mathcal{O}_{K}^{\times}%
\simeq\left\langle -1\right\rangle \times\mathbf{Z}\left\langle u\right\rangle
$, where $u$ is a fundamental unit. For these minimal polynomials the single
real embedding of the fundamental unit always happens to be negative. This
follows from Descartes' Rule of Signs: The polynomial rewritten in $-Z$ is
$-Z^{3}+Z+h$, which has precisely one sign change among its coefficients.
Therefore, it must have a single negative real root. Hence, $\mathcal{O}%
_{K}^{\times,+}\simeq\mathbf{Z}\left\langle -u\right\rangle $ and the ideal
$J(\mathcal{O}_{K}^{\times,+})$ is generated by the single elemet $1-(-u)=1+u$
by Lemma \ref{Lemma_JUCanBeDefinedOnGenerators}.\medskip

\texttt{z = QQ['z'].0}

\texttt{for h in range(1,10):}

\texttt{\qquad if (z\symbol{94}3-z+h).is\_irreducible():}

\texttt{\qquad\qquad L.%
%TCIMACRO{\TEXTsymbol{<}}%
%BeginExpansion
$<$%
%EndExpansion
s%
%TCIMACRO{\TEXTsymbol{>} }%
%BeginExpansion
$>$
%EndExpansion
= NumberField(z\symbol{94}3-z+h)}

\texttt{\qquad\qquad U = L.unit\_group()}

\texttt{\qquad\qquad T = 1+U.gen(1)}

\texttt{\qquad\qquad J = L.ideal(T);}

\texttt{\qquad\qquad print s.minpoly(), " -%
%TCIMACRO{\TEXTsymbol{>} }%
%BeginExpansion
$>$
%EndExpansion
", factor(J.norm())\medskip}

Note that \textrm{SAGE}\ always returns the unit group in the format so that
\texttt{U.gen(0)} is the torsion generator and \texttt{U.gen(1)} the
non-torsion generator. Hence, in this particular case the ideal $J$ needs to
be generated by \texttt{1+U.gen(1)}. This code can easily be adapted to
similar computations. For example, for the polynomials $Z^{7}-Z-h$ we will
have $s=1$ and $t=3$ real resp. complex places. Any such polynomial has
exactly one sign change in its coefficients, so by Descartes' Rule it has
precisely one positive real root. Hence, $\mathcal{O}_{K}^{\times}%
=\{\pm1\}\times\mathcal{O}_{K}^{\times,+}$ and therefore $J(\mathcal{O}%
_{K}^{\times,+})$ is generated by the elements $1-u$, where $u$ runs through
the generators of $\mathcal{O}_{K}^{\times,+}$, again by Lemma
\ref{Lemma_JUCanBeDefinedOnGenerators}. Replace the definition of \texttt{T}
by\medskip

\texttt{T = [(1-U.gen(i)) for i in range(1,len(U.gens()))]\medskip},

since we now will have several generators. We discard the generator at
\texttt{i=0 }since this is again the torsion generator.
\end{code}

\bigskip

\bigskip
\end{document}